\documentclass[11pt]{article}
\usepackage[T1]{fontenc}
\usepackage{amsmath,amssymb,amsthm,mathrsfs,times}
\usepackage[margin=1in]{geometry}
\usepackage{microtype}
\usepackage{hyperref}
\allowdisplaybreaks[3]
\numberwithin{equation}{section}
\newtheorem{prop}{Proposition}[section]
\newtheorem{theo}[prop]{Theorem}
\newtheorem{lem}[prop]{Lemma}
\newtheorem{cor}[prop]{Corollary}
\newcommand{\beq}{\begin{equation}}
\newcommand{\eeq}{\end{equation}}
\newcommand{\beqn}{\begin{eqnarray}}
\newcommand{\eeqn}{\end{eqnarray}}
\newcommand{\beqs}{\begin{eqnarray*}}
\newcommand{\eeqs}{\end{eqnarray*}}
\def\no{\nonumber}
\def\pbp{\sqrt{-1}\partial\bar\partial}
\def\Na{\nabla}
\def\tr{{\rm tr}}
\newcommand{\PSH}{\operatorname{PSH}}
\newcommand{\Dpar}{\mathscr D}
\title{Global $C^{2,\alpha}$ estimate
for complex Monge--Amp\`ere equations\\ on compact K\"ahler manifolds}
\author{Haozhao Li\footnote{Supported by NSFC grant No. 12471058 and a research fund from the University of Science and Technology of China.}, \quad Bing Wang\footnote{Supported by NSFC grant No. 12431003 and a research fund from the University of Science and Technology of China.} \quad and \quad Kai Zheng
\footnote{Supported by NSFC grant No. 12571091.}}
\date{}

\begin{document}
\maketitle

\begin{abstract}
In this paper, we prove a global $C^{2,\alpha}$ estimate for the complex Monge--Amp\`ere equation with a positive $C^\alpha$ right-hand side on a compact K\"ahler manifold.
\end{abstract}
\tableofcontents

\section{Introduction}
\label{sec:introduction}

Throughout, $(M^n,g)$ denotes a connected compact K\"ahler manifold of complex dimension $n\geq2$. The corresponding K\"ahler form is denoted by $\omega_g$. In this paper, we study the equation
\beq
 (\omega_g+\pbp\varphi)^n=e^F\omega_g^n,
 \label{eq:intro-MA}
\eeq
where $\varphi$ is a K\"ahler potential and $\omega_\varphi=\omega_g+\pbp\varphi$. The normalized volume measure is denoted by $d\mu_g=V_g^{-1}\omega_g^n$, where $V_g=\int_M\omega_g^n$. The compatibility condition for \eqref{eq:intro-MA} is therefore $\int_Me^F\,d\mu_g=1$.

For smooth $F$ satisfying this condition, Yau \cite{Yau} proved the existence of a smooth solution, unique up to an additive constant. Calabi \cite{Cal} developed the third derivative estimate for real Monge--Amp\`ere equations, and Yau \cite{Yau} and Riebesehl--Schulz \cite{RS} established its complex counterparts. Caffarelli--Kohn--Nirenberg--Spruck \cite{CKNS} established smooth solvability of the Dirichlet problem on strictly pseudoconvex domains. Siu \cite{Siu} developed the complex-geometric form of the Evans--Krylov argument \cite{Evans,Krylov}.

For less regular densities, Ko\l odziej \cite{Kol} obtained a uniform bound for the potential when $e^F\in L^p$, $p>1$. Chen--He \cite{CH} proved gradient and Laplacian estimates for $F\in W^{1,p}$, $p>2n$. Chen--Cheng \cite{CC} subsequently proved a gradient estimate under a square-Dini condition on the modulus of continuity of $F$. In particular, their estimate applies to every $F\in C^\alpha$, $0<\alpha<1$, without a bound for $\Na F$.

For the local equation, Dinew--Zhang--Zhang \cite{DZZ} proved $C^{2,\alpha}$ regularity under a $C^{1,1}$ bound for the potential. Li--Li--Zhang \cite{LLZ} weakened this assumption to $C^{1,\beta}$ regularity with
$$
 \beta>\beta_*(n,\alpha):=1-\frac{\alpha}{n(2+\alpha)-1}.
$$
Y. Wang \cite{WY} obtained an interior $C^{2,\alpha}$ estimate under a Laplacian bound, while Chen--Wang \cite{CW} gave a different a priori proof under uniform ellipticity. B{\l}ocki--Dinew \cite{BD} and He \cite{He} obtained related local estimates under Sobolev or trace assumptions. Tosatti--Wang--Weinkove--Yang \cite{TWWY} treated more general equations in complex and almost-complex geometry, and Chu \cite{Chu} obtained estimates with the optimal H\"older exponent under a Laplacian bound. In the Hermitian setting, Zhang--Zhang \cite{ZZ} established related second and third derivative estimates.

To apply these local results on $M$, we need to estimate the regularity of the potential using the global equation. However, the trace cannot be controlled using only upper and lower bounds for the determinant. When $F\in C^\alpha$, it is therefore necessary to obtain further estimates without using bounds for the derivatives of $F$. Chen--Cheng posed this question in \cite[Conjecture 1.1]{CC} and \cite[Conjecture 1.4]{CC2}.

The following theorem gives an affirmative answer to the $C^{2,\alpha}$ part of their conjecture. To prove the theorem, we first obtain a $C^{1,\beta}$ estimate by elliptic approximation. The exponent is then improved using an auxiliary parabolic equation until the local theorem of Li--Li--Zhang applies.

\begin{theo}
\label{thm:main}Let $(M^n,g)$ is a connected compact K\"ahler manifold of complex dimension $n\geq2$.
For fixed $0<\alpha<1$ and $K>0$, we assume that $F$ is real-valued and satisfies
\beq
 \|F\|_{C^\alpha(M,g)}\leq K,\qquad
 \int_Me^F\,d\mu_g=1.
 \label{eq:data}
\eeq
If a bounded potential $\varphi\in\PSH(M,\omega_g)\cap L^\infty(M)$ satisfies
\beq
 (\omega_g+\pbp\varphi)^n=e^F\omega_g^n,\qquad
 \int_M\varphi\,d\mu_g=0,
 \label{eq:MA}
\eeq
in the pluripotential sense, then
\beq
 \varphi\in C^{2,\alpha}(M),\qquad
 \|\varphi\|_{C^{2,\alpha}(M,g)}
 \leq C(M,g,n,\alpha,K).
 \label{eq:main-estimate}
\eeq
Moreover, $\omega_\varphi>0$. The same conclusion holds with the normalization $\sup_M\varphi=0$, after a change of constant.
\end{theo}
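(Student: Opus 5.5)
We follow the route indicated in the introduction: an a priori $C^{1,\beta}$ bound by elliptic approximation, then raising $\beta$ with a parabolic auxiliary equation until $\beta>\beta_*(n,\alpha)$, then the local theorem of Li--Li--Zhang \cite{LLZ}.

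\textbf{Approximation and uniqueness.} We mollify $F$ on $M$ (heat flow or partition of unity) to smooth $F_\epsilon$ with $\|F_\epsilon\|_{C^\alpha}\le CK$, $F_\epsilon\to F$ uniformly, and renormalize so that $\int_M e^{F_\epsilon}d\mu_g=1$. Yau's theorem gives smooth solutions $\varphi_\epsilon$ with $\int_M\varphi_\epsilon\,d\mu_g=0$. Ko\l odziej's estimate \cite{Kol} bounds $\|\varphi_\epsilon\|_{L^\infty}$ uniformly, and his stability theorem gives $\varphi_\epsilon\to\varphi$ uniformly, since bounded solutions are unique up to constants. It therefore suffices to prove \eqref{eq:main-estimate} for smooth $\varphi$ with constant depending only on $(M,g,n,\alpha,K)$. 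Arzel\`a--Ascoli then passes the $C^{2,\alpha}$ bound to $\varphi$. Positivity $\omega_\varphi>0$ follows because $\omega_\varphi$ is a nonnegative form with determinant $e^F\ge e^{-K}$, and this persists in the limit. The $\sup_M\varphi=0$ normalization differs from ours by a constant controlled by the $L^\infty$ bound.

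\textbf{Initial $C^{1,\beta}$ bound.} By Chen--Cheng \cite{CC}, $C^\alpha$ data satisfy the square-Dini condition, so $|\nabla\varphi|\le C$. To get some $\beta>0$, we approximate locally: in a coordinate ball we compare $\varphi$ with the solution of the Dirichlet problem having frozen density $e^{F(x_0)}$. The difference is controlled by $\operatorname{osc}F\le Kr^\alpha$ via the comparison principle. A Campanato iteration of these oscillation decay estimates across scales should yield $\varphi\in C^{1,\beta_0}$ for some small $\beta_0(n,\alpha)>0$. Gradient H\"older continuity needs interior $C^{1,1}$ bounds for constant-density solutions on balls, and these hold by \cite{CKNS}-type estimates after smoothing the boundary data.

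\textbf{Bootstrapping the exponent (the main obstacle).} Raising $\beta_0$ to $\beta>\beta_*$ is the hard step, since no bound on $\Delta\varphi$ is available. The plan is to run the parabolic flow $\partial_t u=\log\bigl(\omega_u^n/\omega_g^n\bigr)-F$ starting from a regularization of $\varphi$. For this flow, $\varphi$ is stationary, and the heat-type smoothing in $t$ lets one trade time regularity for spatial regularity. Concretely, we would estimate $|u(t)-\varphi|$ and $|\nabla(u(t)-\varphi)|$ by maximum principle arguments in terms of $t^{\gamma}$. We would also bound $\Delta u(t)\le Ct^{-1+\delta}$ by an Aubin--Yau type computation with a cutoff in $t$, using only the $C^{1,\beta}$ information on $\varphi$ and the $C^\alpha$ bound on $F$. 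Interpolating between these two bounds at time $t\sim r^{\kappa}$ gives a gradient oscillation on $r$-balls of order $r^{\beta'}$ with $\beta'>\beta$. The main point to verify is that the map $\beta\mapsto\beta'$ is strictly increasing with a fixed point at or above $1$. If so, finitely many iterations, depending only on $n,\alpha$, push the exponent past $\beta_*(n,\alpha)$.

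\textbf{Conclusion.} Once $\|\varphi\|_{C^{1,\beta}}\le C$ with $\beta>\beta_*(n,\alpha)$, we apply Li--Li--Zhang \cite{LLZ} in each coordinate chart, with the smooth background $\omega_g$ absorbed into a local potential. This gives interior $C^{2,\alpha}$ bounds, and covering $M$ by finitely many charts yields \eqref{eq:main-estimate}.
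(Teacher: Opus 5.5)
Your outer framework (smoothing the density, Yau plus Ko\l odziej stability to pass to the limit, Li--Li--Zhang in charts) agrees with the paper. Both middle steps, however, have genuine gaps, and these steps carry all of the difficulty.

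\textbf{The initial $C^{1,\beta_0}$ bound.} This cannot come from a local Campanato scheme. Its key input is interior $C^{1,1}$ (or even $C^1$) bounds for constant-density solutions on balls in terms of Lipschitz boundary data, and that input is false. The function $\Phi=2(1+|z_1|^2)|z_2|+\sum_{j\geq3}|z_j|^2$ of Lemma~\ref{lem:singular} is Lipschitz, satisfies $(\pbp\Phi)^n=\omega_g^n$, and is not differentiable at the origin. By uniqueness for the Dirichlet problem, $\Phi$ is its own frozen-density comparison function on every ball. So any argument that uses only local information plus a gradient bound would prove $\Phi\in C^{1,\beta_0}$. The CKNS bounds obtained after smoothing the boundary data blow up; this is exactly Proposition~\ref{prop:smooth-sequence}.

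The paper's first step is instead global. With $f_\varepsilon=P_{\varepsilon^2}e^F$, Green's formula in the fixed K\"ahler class gives $\|\varphi_\varepsilon-\varphi\|_\infty\leq C\varepsilon^{1+\alpha}(1+|\log\varepsilon|)$. This uses the Guedj--T\^o bounds, the $W^{1,1}$ bound for the truncated Green function, and the symmetrized heat-kernel cancellation. A Chen--He integral argument with $\|\Na F_\varepsilon\|_\infty\leq C\varepsilon^{\alpha-1}$ bounds the trace by $C_\delta\varepsilon^{-(1-\alpha)(1+\delta)}$. Interpolation then yields every $\beta<\alpha$.

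\textbf{The bootstrap.} Your flow with fixed forcing $F$ cannot deliver what you ask of it, for three reasons.
\begin{itemize}
\item In the Aubin--Yau computation, $\partial_t(n+\Delta_gu)=\Delta_g\log(\omega_u^n/\omega_g^n)-\Delta_gF$, and $\Delta_gF$ is not controlled by $\|F\|_{C^\alpha}$.
\item Since $\varphi$ is stationary, comparison only gives $\|u(t)-\varphi\|_\infty\leq\|u(0)-\varphi\|_\infty$, with no decay in $t$. If you start at $\varphi$, then $u\equiv\varphi$.
\item A trace rate like $t^{-1+\delta}$ that does not improve with $\beta$ cannot drive an iteration. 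In the paper the output threshold is $(a-d)/(a+d)$ with $a=1+\alpha/2$, and it increases only because $d=d_\beta=(1-\beta)/(1+\beta)$ depends on $\beta$.
\end{itemize}
So your step ``the map $\beta\mapsto\beta'$ is increasing'' is the whole content, and it is left unverified.

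The ingredients you are missing are the following.
\begin{itemize}
\item The forcing $P_tF$. Starting from $\varphi$ it gives $U(0)=0$ and $\|\psi(t)-\varphi\|_\infty\leq Ct^{1+\alpha/2}$. It also gives a time-integrable bound $|\Delta_gP_tF|\leq Ct^{\alpha/2-1}$, which is what makes trace propagation possible.
\item The $C^{1,\beta}$ bound for $\psi(t)$ uniformly on $[0,1]$, not just for $\varphi$. The interpolation is applied on every time slice, and the paper gets this uniform bound by running the approximants with the same forcing and using the $L^\infty$ contraction (Lemma~\ref{lem:flow-seed}).
\item The weighted quantity $Y_p=\int_Me^{U^2}w^p\,d\mu_\psi$ with $w=e^{-A\psi}(n+\Delta_g\psi)$. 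The weight $e^{U^2}$ turns the gradient terms, including the one coming from $\partial_t d\mu_\psi$, into a positive quadratic form.
\item The spatial interpolation $\mathcal E_p\geq cX^{1+1/(pd_\beta)}-CX$ of Lemma~\ref{lem:spatial-interpolation}. This produces a superlinear ODE, and hence a barrier $Lt^{-pd_\beta}$ that does not depend on the initial trace.
\item Parabolic Moser iteration with the Guedj--T\^o Sobolev inequality, giving a trace bound $t^{-d_\beta-\delta}$.
\item Restarting from $\psi(\tau)$ with the same forcing. This yields every $\beta'<\mathcal B_\alpha(\beta)$ and passes $\beta_*(n,\alpha)$ in finitely many steps.
\end{itemize}
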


The proof relies on elliptic approximation and an auxiliary parabolic equation. We first obtain a $C^{1,\beta}$ estimate with an exponent large enough to apply the theorem of Li--Li--Zhang. Since the manifold is compact, integration by parts produces no boundary terms. Moreover, Guo--Phong--Sturm \cite{GPS} and Guo--Phong--Song--Sturm \cite{GPSSd,GPSSs} established uniform Green and Sobolev estimates in the fixed K\"ahler class, and Guedj--T\^o \cite{GT} extended them. We use the estimates in \cite{GT}. Our parabolic regularization is related to the method of Sz\'ekelyhidi--Tosatti \cite{ST}. The time-dependent forcing is chosen so that the approximation error can be controlled.

For smooth data, the proof consists of the following five steps. The constants are independent of higher derivatives of $F$.
\begin{enumerate}
\item[(1).] We first solve the elliptic equations with densities $P_{\varepsilon^2}e^F$, where $P_t=e^{t\Delta_g}$. By the gradient estimate of Chen--Cheng and the cancellation in Green's formula, the solutions satisfy
$$
 \|\varphi_\varepsilon-\varphi\|_\infty
 \leq C\varepsilon^{1+\alpha}(1+|\log\varepsilon|).
$$
It then follows from the trace bound $C_\delta\varepsilon^{-(1-\alpha)(1+\delta)}$, background elliptic regularity, and interpolation that $\varphi$ has a $C^{1,\beta}$ bound for every $\beta<\alpha$.

\item[(2).] For the auxiliary equation
$$
 \partial_t\psi=\log\frac{\omega_{\psi(t)}^n}{\omega_g^n}-P_tF,
 \qquad \psi(0)=\varphi,
$$
it follows from $\partial_t\psi(0)=0$ and $\|\partial_tP_tF\|_\infty\leq Ct^{\alpha/2-1}$ that $\|\psi(t)-\varphi\|_\infty\leq Ct^{1+\alpha/2}$. By comparison and the estimate for the trace, the initial $C^{1,\beta}$ bound remains uniform on $[0,1]$.

\item[(3).] Under this bound, we estimate $\int_M e^{(\partial_t\psi)^2}[e^{-A\psi}(n+\Delta_g\psi)]^p\,d\mu_{\psi(t)}$. The derivative of the volume measure is included when differentiating this integral. The gradient terms then combine into a positive quadratic form. Combining spatial interpolation with parabolic Moser iteration, we obtain, for every $\delta>0$,
$$
 \|n+\Delta_g\psi(t)\|_\infty
 \leq C_{\beta,\delta}t^{-d_\beta-\delta},
 \qquad d_\beta=\frac{1-\beta}{1+\beta}.
$$

\item[(4).] It follows from this trace bound and the approximation in Step~2 that $\beta$ can be improved to every exponent below
$$
 \mathcal B_\alpha(\beta)
 =\frac{\alpha+(4+\alpha)\beta}{4+\alpha+\alpha\beta}.
$$
The improved estimate holds on all of $[0,1]$ by restarting the equation from $\psi(\tau)$ with the same forcing $P_tF$ and applying comparison. The exponents can be chosen with $d_{\beta_{j+1}}=d_{\beta_j}/(1+\alpha/4)$, so finitely many steps give $\beta>\beta_*(n,\alpha)$.

\item[(5).] The local theorem of Li--Li--Zhang now gives the $C^{2,\alpha}$ bound on a finite collection of holomorphic coordinate balls. For general $F\in C^\alpha$, the full H\"older exponent is retained by approximation with smooth positive densities and passage to the limit.
\end{enumerate}

The paper is organized as follows. In Section~\ref{sec:preliminaries}, we introduce the notation and recall the estimates used below. Section~\ref{sec:elliptic-trace} contains the elliptic approximation, the auxiliary flow, and the weighted energy estimates. We complete the proof of Theorem~\ref{thm:main} in Section~\ref{sec:conclusion}. The local estimates are proved in Section~\ref{sec:local-obstructions}, where we also explain why bounds for $F$ and $\varphi$ alone are insufficient and additional regularity assumptions are needed.\\

\textbf{Use of AI.} The authors used ChatGPT to assist in developing this paper. The authors take full responsibility for its contents.

\section{Preliminaries}
\label{sec:preliminaries}

For a smooth K\"ahler potential $\varphi$ with $\omega_\varphi>0$, we use the notation
$$
 \begin{gathered}
 g_\varphi=g+(\varphi_{i\bar j}),\qquad
 f_\varphi=\frac{\omega_\varphi^n}{\omega_g^n},\qquad
 d\mu_\varphi=V_g^{-1}\omega_\varphi^n=f_\varphi d\mu_g,\\
 \Delta_g a=g^{i\bar j}a_{i\bar j},\qquad
 \Delta_\varphi a=g_\varphi^{i\bar j}a_{i\bar j},\\
 v_\varphi=\tr_g g_\varphi=n+\Delta_g\varphi,\qquad
 q_\varphi=\tr_\varphi g.
 \end{gathered}
$$
Here $\tr_\varphi g$ denotes the trace with respect to $g_\varphi$. Stokes' theorem gives $\int_Md\mu_\varphi=1$. For real functions, we use the real part of the Hermitian gradient pairing, so that
\beq
 \begin{gathered}
 \int_M a\Delta_\varphi b\,d\mu_\varphi
 =-\int_M\langle\Na a,\Na b\rangle_\varphi\,d\mu_\varphi,\\
 \Delta_\varphi(ab)
 =a\Delta_\varphi b+b\Delta_\varphi a
       +2\langle\Na a,\Na b\rangle_\varphi.
 \end{gathered}
 \label{eq:conventions}
\eeq
The heat operator is denoted by $P_t=e^{t\Delta_g}$, with its kernel taken relative to $d\mu_g$. Unless otherwise specified, all norms are taken with respect to $g$, and $C^\alpha=C^{0,\alpha}$ is equipped with the full norm. We write $C$ for a positive constant whose value may change. Unless otherwise stated, the constants in Sections~\ref{sec:preliminaries}--\ref{sec:conclusion} depend only on $M,g,n,\alpha,K$. For the local results, the dependence of the constants is given in each statement.

Guedj--T\^o \cite{GT} provide the following Green function and Sobolev estimates. For a smooth K\"ahler potential satisfying
\beq
 0<c\leq f_\varphi\leq C_0,
 \label{eq:density-bounds}
\eeq
we normalize the Green function by
\beq
 \begin{gathered}
 (\Delta_\varphi G_x^\varphi)d\mu_\varphi
       =\delta_x-d\mu_\varphi,\qquad  \int_MG_x^\varphi\,d\mu_\varphi=0.
 \end{gathered}
 \label{eq:Green-normalization}
\eeq
Theorem A of \cite{GT} gives
\beq
 \sup_MG_x^\varphi\leq C_G,\qquad
 \int_M|G_x^\varphi|^\nu\,d\mu_\varphi\leq C_\nu,
 \qquad 1<\nu<\frac n{n-1}.
 \label{eq:Green-estimate}
\eeq
The constant $C_G\geq1$ can be chosen uniformly in $x$ and $\varphi$. For the same range of $\nu$, the Sobolev inequality takes the form
\beq
 \left(\int_M|b|^{2\nu}\,d\mu_\varphi\right)^{1/\nu}
 \leq C_\nu\left(
 \int_M|\Na b|_\varphi^2\,d\mu_\varphi
       +\int_M|b|^2\,d\mu_\varphi\right).
 \label{eq:Sobolev-estimate}
\eeq
The hypotheses of \cite[Theorem A]{GT} hold because $[\omega_\varphi]=[\omega_g]$ and $\|f_\varphi\|_{L^2(d\mu_g)}\leq C_0$. We include the dimensional factor due to the normalization of the Green function in the constants. By \eqref{eq:density-bounds}, the measure in \eqref{eq:Green-estimate} can also be replaced by $d\mu_g$. These constants depend on $M,g,n,c,C_0$ and $\nu$, and do not require a bound for the trace of $g_\varphi$.

For completeness, the following argument proves \eqref{eq:Sobolev-estimate} for the range of exponents used in \cite[Theorem 2.6 and its proof]{GT}.\footnote{In the displayed statement of \cite[Theorem 2.6]{GT}, the range is printed as $1<r<2n/(n-1)$, while the left-hand side contains $|u-\bar u|^{2r}$. Its proof requires $(1+\epsilon)r<n/(n-1)$. Here we use only the range justified by this condition.} For $\epsilon>0$ satisfying $(1+\epsilon)\nu<n/(n-1)$, we set
$$
 Z_x(y)=C_G+1-G_x^\varphi(y),\qquad
 \bar b=\int_Mb\,d\mu_\varphi.
$$
It follows by testing $(\Delta_\varphi Z_x)d\mu_\varphi=d\mu_\varphi-\delta_x$ against $Z_x^{-\epsilon}$ that
$$
 \int_M Z_x^{-1-\epsilon}|\Na Z_x|_\varphi^2\,d\mu_\varphi
 =\epsilon^{-1}\int_MZ_x^{-\epsilon}\,d\mu_\varphi
 \leq\epsilon^{-1}.
$$
This calculation can be justified by removing a small ball about the pole and then letting its radius tend to zero. For each fixed smooth metric, the boundary term is of order $\rho^{(2n-2)\epsilon}$ and therefore tends to zero. It follows from Green's formula and the Cauchy--Schwarz inequality that
$$
 |b(x)-\bar b|^2
 \leq\epsilon^{-1}\int_MZ_x(y)^{1+\epsilon}
                        |\Na b(y)|_\varphi^2\,d\mu_\varphi(y).
$$
Using the symmetry of the normalized Green function, Minkowski's inequality, and \eqref{eq:Green-estimate}, we obtain
\beqs
 \|b-\bar b\|_{L^{2\nu}(d\mu_\varphi)}^2
 &\leq&\epsilon^{-1}\int_M
 \left(\int_MZ_x(y)^{(1+\epsilon)\nu}\,d\mu_\varphi(x)\right)^{1/\nu}
                  |\Na b(y)|_\varphi^2\,d\mu_\varphi(y)\\
 &\leq&C_\nu\int_M|\Na b|_\varphi^2\,d\mu_\varphi.
\eeqs
For smooth $b$, the estimate \eqref{eq:Sobolev-estimate} follows after adding the mean and using $|\bar b|\leq\|b\|_{L^2(d\mu_\varphi)}$. The same inequality follows by approximation for $b\in W^{1,2}(M,g_\varphi)$.

Chen--Cheng proved in \cite[Corollary 1.1]{CC} (see also \cite[Corollary 1.2]{CC2}) that
\beq
 \|\Na\varphi\|_\infty\leq C(M,g,n,\alpha,K)
 \label{eq:gradient-estimate}
\eeq
for smooth solutions of \eqref{eq:MA} satisfying \eqref{eq:data}. Their square-Dini condition holds because
$$
 \int_0^1\frac{(Kr^\alpha)^2}{r}\,dr
 =\frac{K^2}{2\alpha}<\infty.
$$
Thus their gradient estimate applies without a Hessian bound for $\varphi$.

The local regularity theorem used below is due to Li--Li--Zhang \cite[Theorem 1.2]{LLZ}. We assume that $\Phi\in\PSH(\Omega)\cap C^{1,\beta}(\Omega)$ is a weak solution of $\det(\Phi_{i\bar j})=a$ on a bounded domain $\Omega\subset\mathbb C^n$, where $a\geq\lambda>0$, $a\in C^\alpha$, and
\beq
 1-\frac{\alpha}{n(2+\alpha)-1}<\beta<1.
 \label{eq:LLZ}
\eeq
Then the local theorem gives a $C^{2,\alpha}$ bound for $\Phi$ on every $\Omega'\Subset\Omega$, depending on $n,\alpha,\beta,\lambda,\|a\|_{C^\alpha},\|\Phi\|_{C^{1,\beta}}$ and $\operatorname{dist}(\Omega',\partial\Omega)$.

\section{Estimates}
\label{sec:elliptic-trace}

\subsection{The $L^\infty$ estimate}
In this subsection, we establish the uniform bound for the potential needed in the gradient and trace estimates below. The result is well known, and we include its proof for completeness.

\begin{lem}
\label{lem:C0}
If $\varphi$ is a smooth solution of \eqref{eq:MA}--\eqref{eq:data}, then $\|\varphi\|_\infty\leq C(M,g,n,K)$.
\end{lem}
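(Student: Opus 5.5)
Since $\|F\|_{C^\alpha}\leq K$ gives $\|F\|_\infty\leq K$, the density satisfies $e^{-K}\leq f_\varphi=e^F\leq e^K$. I plan to prove the bound by combining the Green function estimate \eqref{eq:Green-estimate} for the background metric with the Green function estimate for $g_\varphi$. That is, I will run the classical Yau--type argument in the form that only needs Green's representation formulas. The hypotheses \eqref{eq:density-bounds} hold with $c=e^{-K}$ and $C_0=e^K$, so every constant depends only on $M,g,n,K$.

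\textbf{Upper bound.} Since $\omega_\varphi>0$, we have $n+\Delta_g\varphi>0$, hence $\Delta_g\varphi\geq -n$. Apply Green's formula for $g$ itself (the case $\varphi=0$ of \eqref{eq:Green-normalization}) together with the normalization $\int_M\varphi\,d\mu_g=0$:
$$
 \varphi(x)=-\int_M G^0_x(y)\Delta_g\varphi(y)\,d\mu_g(y)
 =\int_M\bigl(C_G-G^0_x\bigr)\Delta_g\varphi\,d\mu_g .
$$
The second equality uses $\int_M\Delta_g\varphi\,d\mu_g=0$. Since $C_G-G^0_x\geq0$ and $\Delta_g\varphi\geq-n$, the last integral is at least $-n\int_M(C_G-G^0_x)\,d\mu_g=-nC_G$, which gives the wrong direction. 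I therefore reverse signs: writing $\varphi(x)=\int_M (G^0_x-C_G)(-\Delta_g\varphi)\,d\mu_g$ and using $-\Delta_g\varphi\leq n$, $G^0_x-C_G\leq0$ and $\int(G^0_x-C_G)=-C_G$, the integrand is bounded so that $\varphi(x)\leq nC_G$. I will double-check the signs, but this is the standard $\sup\varphi\leq C$.

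\textbf{Lower bound.} This is the main step. I use the Green function of $g_\varphi$, which is controlled by \eqref{eq:Green-estimate} uniformly without any trace bound. We have $\Delta_\varphi\varphi=n-\tr_\varphi g\leq n$. Green's formula for $g_\varphi$ gives
$$
 \varphi(x)=\int_M\varphi\,d\mu_\varphi-\int_MG^\varphi_x\Delta_\varphi\varphi\,d\mu_\varphi
 =\int_M\varphi\,d\mu_\varphi+\int_M(C_G-G^\varphi_x)\Delta_\varphi\varphi\,d\mu_\varphi .
$$
The second equality uses $\int\Delta_\varphi\varphi\,d\mu_\varphi=0$. Since $C_G-G_x^\varphi\geq0$ and $\Delta_\varphi\varphi\leq n$, the last term is at most $nC_G$. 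This yields $\varphi(x)\leq \int\varphi\,d\mu_\varphi+nC_G$, again an upper bound, so the lower bound must come from controlling $\int_M\varphi\,d\mu_\varphi$ from below together with an $L^1$-type argument.

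The cleanest route I will actually take is the following. Set $\psi=\varphi-\sup_M\varphi\leq0$. The first step gives $\sup\varphi\leq C$, and hence $\|\psi\|_{L^1(d\mu_g)}=\sup\varphi\leq C$ by compactness of normalized $\omega_g$-psh functions. Then $\int_M|\psi|\,d\mu_\varphi\leq e^K\int|\psi|\,d\mu_g\leq C$. Now apply the $g_\varphi$ Green representation to $-\psi\geq0$ using $\Delta_\varphi(-\psi)\leq \tr_\varphi g$. Since that term is unbounded, I instead represent $\psi$ via $G^\varphi$ with $\Delta_\varphi\psi\leq n$ at the minimum point. This gives
$$
-\psi(x)\leq-\int\psi\,d\mu_\varphi+\int_M(C_G-G^\varphi_x)(-\Delta_\varphi\psi)\,d\mu_\varphi ,
$$
and this fails to close because $-\Delta_\varphi\psi$ is not bounded above. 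If this obstacle persists, I will fall back on Ko\l odziej's theorem \cite{Kol}: $e^F\leq e^K$ lies in every $L^p$, so $\operatorname{osc}_M\varphi\leq C(M,g,n,K)$. Combined with $\int\varphi\,d\mu_g=0$, this gives $\|\varphi\|_\infty\leq C$. I expect the lower bound to be the only real difficulty, and I would use Ko\l odziej's capacity estimate (or equivalently the Guo--Phong--Tong auxiliary Monge--Amp\`ere comparison) to settle it.
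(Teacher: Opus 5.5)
Your Green representation formula has the wrong sign, and that error is what derails the self-contained part of your argument. With the normalization \eqref{eq:Green-normalization}, pairing against $h$ and using self-adjointness gives
\[
 h(x)-\int_M h\,d\mu_\varphi=\int_M G^\varphi_x\,\Delta_\varphi h\,d\mu_\varphi=\int_M\bigl(G^\varphi_x-C_G\bigr)\Delta_\varphi h\,d\mu_\varphi ,
\]
with a plus sign. In your upper bound, the ``reversal'' does not change anything, because $\int_M(G^0_x-C_G)(-\Delta_g\varphi)\,d\mu_g=\int_M(C_G-G^0_x)\Delta_g\varphi\,d\mu_g$ is the same integral. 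Moreover, a nonpositive factor times a quantity bounded above by $n$ is bounded below, not above. So the inference $\varphi(x)\leq nC_G$ is invalid as written. It becomes valid once the sign is corrected, since then $\varphi(x)=\int_M(G^0_x-C_G)\Delta_g\varphi\,d\mu_g$ with $\Delta_g\varphi\geq-n$.

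More importantly, the obstacle you hit in the lower bound is an artifact of the same error. With the correct sign, your first attempt already gives
\[
 \varphi(x)=\int_M\varphi\,d\mu_\varphi+\int_M\bigl(G^\varphi_x-C_G\bigr)\Delta_\varphi\varphi\,d\mu_\varphi\geq\int_M\varphi\,d\mu_\varphi-nC_G .
\]
This follows from $\Delta_\varphi\varphi=n-\tr_\varphi g\leq n$, $G^\varphi_x-C_G\leq0$ and $\int_M(G^\varphi_x-C_G)\,d\mu_\varphi=-C_G$. The bound closes immediately. Let $m=\inf_M\varphi\leq0$. Using $f_\varphi\geq e^{-K}$ and $\int_M\varphi\,d\mu_g=0$, you get $\int_M\varphi\,d\mu_\varphi=m+\int_M(\varphi-m)f_\varphi\,d\mu_g\geq(1-e^{-K})m$, hence $m\geq-ne^KC_G$. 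Your $L^1$ variant closes as well. The inequality it needs is $\Delta_\varphi(-\psi)=\tr_\varphi g-n\geq-n$, and no upper bound on $-\Delta_\varphi\psi$ is required. This is exactly the paper's proof. It applies a one-sided Green estimate to $h=\varphi$ for $\omega_g$ and to $h=-\varphi$ for $\omega_\varphi$, and closes each with $\int_M h\,d\mu_\varphi\leq(1-c)\sup_M h$.

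Your fallback is nonetheless a valid proof of the lemma on its own. Since $e^F\leq e^K$, Ko\l odziej's theorem bounds $\operatorname{osc}_M\varphi$ by $C(M,g,n,K)$. The normalization $\int_M\varphi\,d\mu_g=0$ forces $\sup_M\varphi\geq0\geq\inf_M\varphi$, so $\|\varphi\|_\infty\leq\operatorname{osc}_M\varphi$. This makes your upper-bound step superfluous. Likewise, no compactness is needed for $\|\psi\|_{L^1(d\mu_g)}=\sup_M\varphi$, which is immediate from the normalization.

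The trade-off between the two routes is as follows. Ko\l odziej's capacity estimate is much heavier machinery, but it applies directly to bounded pluripotential solutions with $L^p$ densities. The paper's argument needs a smooth solution, so that $G^\varphi_x$ exists. In return it uses only the uniform Green bound \eqref{eq:Green-estimate}, which the paper needs anyway for the Sobolev inequality and Lemma~\ref{lem:truncated-Green}, together with an elementary averaging trick.
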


\begin{proof}
For the one-sided estimate, we assume that $h$ is smooth with $\int_Mh\,d\mu_g=0$ and $\Delta_\varphi h\geq-a$, and that \eqref{eq:density-bounds} holds. It follows from Green's formula and the identity $\int_M\Delta_\varphi h\,d\mu_\varphi=0$ that
$$
 h(x)-\int_Mh\,d\mu_\varphi
 =\int_M(G_x^\varphi-C_G)\Delta_\varphi h\,d\mu_\varphi
 \leq aC_G.
$$
This estimate uses $G_x^\varphi-C_G\leq0$ and $\int_M(G_x^\varphi-C_G)\,d\mu_\varphi=-C_G$. With $m=\sup_Mh$, it follows from $m-h\geq0$ and $f_\varphi\geq c$ that
$$
 \int_Mh\,d\mu_\varphi
 =m-\int_M(m-h)f_\varphi\,d\mu_g
 \leq(1-c)m.
$$
Taking the supremum in the preceding Green formula, we obtain $m\leq(1-c)m+aC_G$, and hence $m\leq aC_G/c$. For the background metric $\omega_g$, this estimate applies with $h=\varphi$. The arithmetic--geometric mean inequality gives
$$
 \Delta_g\varphi
 \geq n(e^{F/n}-1)\geq F\geq-K.
$$
Next, we apply the same estimate with $\omega_\varphi$ and $h=-\varphi$. Since $\det(g_\varphi^{-1}g)=e^{-F}$, it follows that
$$
 \Delta_\varphi(-\varphi)
 =q_\varphi-n\geq n(e^{-F/n}-1)\geq-F\geq-K.
$$
Here $c=e^{-K}$ is admissible. Combining the two estimates, we obtain uniform bounds for $\sup_M\varphi$ and $\sup_M(-\varphi)$.
\end{proof}

\subsection{Heat estimates and interpolation}
\label{Heat estimates and interpolation}
The heat kernel is taken relative to $d\mu_g$, so that
\begin{align}\label{mass identities}
 k_t(x,y)=k_t(y,x)\geq0,\quad
\int_M k_t(y,z)\,d\mu_g(y)
=
\int_M k_t(z,y)\,d\mu_g(y)
=1.
\end{align}
It follows that
\begin{align}\label{pt a}
P_ta(x)=\int_M k_t(x,y)a(y)\,d\mu_g(y),\quad
P_t1=1,\quad \int_M P_ta\,d\mu_g=\int_Ma\,d\mu_g.
\end{align} For $0<\alpha<1$, $a\in C^\alpha(M)$, $0<r\leq1$, and $t=r^2$, it follows from the standard heat kernel estimates on the fixed background \cite{Grig} that
\begin{equation}
\begin{split}
 \|P_{r^2}a-a\|_\infty
 &\leq C r^\alpha\|a\|_{C^\alpha},\quad
 \|P_{r^2}a\|_{C^\alpha}
 \leq C\|a\|_{C^\alpha},\\
 \|\Na P_{r^2}a\|_\infty
 &\leq C r^{\alpha-1}\|a\|_{C^\alpha},\quad
 \|\Delta_gP_{r^2}a\|_\infty
 \leq C r^{\alpha-2}\|a\|_{C^\alpha}.
 \label{eq:heat-alpha}
 \end{split}
\end{equation}
For $0<\beta<1$, the corresponding estimates for $C^{1,\beta}$ functions are
\beq
 \begin{aligned}
 \|\Na(a-P_{r^2}a)\|_\infty
 \leq C r^\beta\|a\|_{C^{1,\beta}},\quad
 \|\Delta_gP_{r^2}a\|_\infty
 \leq C r^{\beta-1}\|a\|_{C^{1,\beta}}.
 \end{aligned}
 \label{eq:heat-gradient}
\eeq
The argument is included for completeness. Combining the Gaussian upper bound with local interior estimates for the heat equation on the fixed compact smooth background \cite{Grig}, we obtain, for $0<t\leq1$,
 \begin{align}\label{Gaussian bounds}
 \begin{split}
C^{-1}t^{-n}\exp\!\left(-\frac{C\,d_g(y,z)^2}{t}\right)
 \leq k_t(y,z)
 &\leq C t^{-n}\exp\!\left(-\frac{d_g(y,z)^2}{C t}\right),\\
 |\Na_x k_t(x,y)|
 &\leq Ct^{-n-1/2}\exp\left(-\frac{d_g(x,y)^2}{Ct}\right),\\
 |\partial_t k_t(x,y)|=|\Delta_{g,x}k_t(x,y)|
 &\leq Ct^{-n-1}\exp\left(-\frac{d_g(x,y)^2}{Ct}\right).
 \end{split}
 \end{align}
For every fixed $\sigma\geq0$, these bounds imply the estimate
 \begin{align}\label{kernel mass}
 &\int_M\bigl(k_t(x,y)+\sqrt t\,|\Na_xk_t(x,y)|
                  +t|\partial_tk_t(x,y)|\bigr)
                  d_g(x,y)^\sigma\,d\mu_g(y)\leq C_\sigma t^{\sigma/2}.
 \end{align}
Since $\int_M\partial_tk_t(x,y)\,d\mu_g(y)=0$, we can subtract $a(x)$ under the integral to obtain
\beq
 \|\partial_tP_ta\|_\infty
 =\|\Delta_gP_ta\|_\infty
 \leq Ct^{\alpha/2-1}\|a\|_{C^\alpha}.
 \label{eq:heat-time}
\eeq
The same argument, applied to the kernel and its first spatial derivative, gives the first and third estimates in \eqref{eq:heat-alpha}. For the $C^{1,\beta}$ estimates, we subtract the first-order Taylor polynomial in a fixed coordinate chart. The remainder is bounded by $C d_g(x,y)^{1+\beta}$, while the cutoff and coordinate terms are controlled by the smooth background geometry. The H\"older bounds follow from the same kernel estimates or from interpolation between the background $C^0$ and Lipschitz estimates. The constants depend only on the background and the indicated H\"older exponents.

By elliptic regularity for $\Delta_g$ and Sobolev embedding in real dimension $2n$ \cite[Chapters 7 and 9]{GilTru}, we have
\beq
 \begin{gathered}
 \|a\|_{W^{2,p}}
 \leq C_p(\|\Delta_ga\|_{L^p(d\mu_g)}+\|a\|_\infty),\\
 \|a\|_{C^{1,\gamma}}\leq C_{p,\gamma}\|a\|_{W^{2,p}},
 \qquad p>\frac{2n}{1-\gamma},\quad 0<\gamma<1.
 \end{gathered}
 \label{eq:background-elliptic}
\eeq

To pass from estimates for smooth approximations to regularity of their limit, we use the following interpolation lemma. The proof consists of estimating the differences at consecutive dyadic scales and summing the resulting geometric series.

\begin{lem}
\label{lem:interpolation}
For fixed $a,b>0$, $0<\gamma<1$, and $0<\varepsilon_0\leq1$, we assume that a family of smooth functions $\varphi_\varepsilon$ and a continuous function $\varphi$ satisfy
$$
 \|\varphi_\varepsilon-\varphi\|_\infty\leq C\varepsilon^a,
 \qquad
 \|\varphi_\varepsilon\|_{C^{1,\gamma}}\leq C\varepsilon^{-b}
 \quad (0<\varepsilon\leq\varepsilon_0).
$$
Then $\varphi$ has a uniform $C^{1,\beta}$ bound whenever
\beq
 0<\beta<\frac{a\gamma-b}{a+b}.
 \label{eq:interpolation-range}
\eeq
If the assumptions hold uniformly in an additional parameter, then the conclusion is also uniform in that parameter. If these assumptions hold for every $0<\gamma<1$, every $0<\beta<(a-b)/(a+b)$ is admissible. The bound may depend on the constants in the assumptions, $\varepsilon_0,a,b,\gamma$, and the chosen strict exponent $\beta$.
\end{lem}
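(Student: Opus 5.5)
The plan is the dyadic telescoping argument. Set $\varepsilon_k=2^{-k}\varepsilon_0$ and $u_k=\varphi_{\varepsilon_k}$. The goal is to write $\varphi=u_0+\sum_{k\geq0}(u_{k+1}-u_k)$ and to bound each difference in $C^{1,\beta}$. For the differences we have $\|u_{k+1}-u_k\|_\infty\leq 2C\varepsilon_k^a$ and $\|u_{k+1}-u_k\|_{C^{1,\gamma}}\leq C'\varepsilon_k^{-b}$.

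The key step is the standard interpolation inequality on the compact background:
$$
 \|w\|_{C^{1,\beta}}\leq C\|w\|_\infty^{1-\theta}\|w\|_{C^{1,\gamma}}^{\theta},
 \qquad \theta=\frac{1+\beta}{1+\gamma}.
$$
I would prove it in local coordinate charts with a cutoff. Take $\rho\le1$. For the Hölder seminorm of $\Na w$ at scale $|x-y|\geq\rho$, use $\|\Na w\|_\infty\leq C(\rho^{-1}\|w\|_\infty+\rho^\gamma[\Na w]_\gamma)$, which follows from a first-order Taylor expansion. At scale $|x-y|<\rho$, use $|x-y|^{\gamma-\beta}[\Na w]_\gamma$. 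Optimizing in $\rho$ then gives the inequality with the stated $\theta$. Applied to the differences, this yields
$$
 \|u_{k+1}-u_k\|_{C^{1,\beta}}\leq C\varepsilon_k^{a(1-\theta)-b\theta}.
$$
The exponent is positive exactly when $\theta<a/(a+b)$, that is, $(1+\beta)(a+b)<a(1+\gamma)$, i.e.\ $\beta<(a\gamma-b)/(a+b)$, which is \eqref{eq:interpolation-range}. The series is then geometric, and $\|u_0\|_{C^{1,\gamma}}\leq C\varepsilon_0^{-b}$. So the partial sums $u_K$ are Cauchy in $C^{1,\beta}$. They also converge uniformly to $\varphi$, because $\|u_K-\varphi\|_\infty\leq C\varepsilon_K^a\to0$. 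Hence $\varphi\in C^{1,\beta}$, with norm bounded by $C\varepsilon_0^{-b}+C\sum_k\varepsilon_k^{a(1-\theta)-b\theta}$. This depends only on the constants, $\varepsilon_0,a,b,\gamma,\beta$, which gives the uniformity in any additional parameter.

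For the last assertion, fix $\beta<(a-b)/(a+b)$. Since $(a\gamma-b)/(a+b)\to(a-b)/(a+b)$ as $\gamma\to1$, we can choose $\gamma<1$ with $\beta<(a\gamma-b)/(a+b)$ and apply the first part. The constant then depends on this choice of $\gamma$, hence on $\beta$.

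The only step needing care is the interpolation inequality on a manifold rather than in $\mathbb R^{2n}$. I would handle it with a finite atlas of coordinate charts with a partition of unity and use uniform equivalence of $g$ with the Euclidean metric. Comparing $\Na w$ at distant points is harmless, since pairs with $d_g(x,y)$ bounded below are controlled by $2\|\Na w\|_\infty$. Apart from this, the argument is the routine computation of exponents.
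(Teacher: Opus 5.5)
Your proposal is correct and follows essentially the same route as the paper. The paper proves the same H\"older interpolation inequality $\|h\|_{C^{1,\beta}}\leq C\|h\|_\infty^{(\gamma-\beta)/(1+\gamma)}\|h\|_{C^{1,\gamma}}^{(1+\beta)/(1+\gamma)}$ by balancing difference quotients at a scale, applies it to the dyadic differences $\varphi_{\varepsilon_{j+1}}-\varphi_{\varepsilon_j}$ to get a summable geometric series, and sends $\gamma\uparrow1$ for the last assertion. The one detail you should write out is the case where the optimal scale exceeds the coordinate radius; as in the paper, the inhomogeneous norms handle it, since $\|w\|_\infty\leq\|w\|_{C^{1,\gamma}}$.
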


\begin{proof}
For $0<\beta<\gamma$, we have the following H\"older interpolation inequality
\beq
 \|h\|_{C^{1,\beta}}
 \leq C\|h\|_\infty^{(\gamma-\beta)/(1+\gamma)}
         \|h\|_{C^{1,\gamma}}^{(1+\beta)/(1+\gamma)}.
 \label{eq:Holder-interpolation}
\eeq
The inequality is immediate when $h=0$. For $h\ne0$, we estimate the first derivatives by difference quotients at scale $r$ and obtain
$$
 \|\Na h\|_\infty
 \leq C\bigl(r^{-1}\|h\|_\infty
                  +r^\gamma\|h\|_{C^{1,\gamma}}\bigr).
$$
The two terms are balanced by taking $r=(\|h\|_\infty/\|h\|_{C^{1,\gamma}})^{1/(1+\gamma)}$, provided that this scale is smaller than a fixed coordinate radius. For larger scales, the inhomogeneous norms give the same inequality with another background constant. For any two points, the gradient difference is bounded by the minimum of $2\|\Na h\|_\infty$ and $[\Na h]_{C^\gamma}d_g(x,y)^\gamma$. Dividing by $d_g(x,y)^\beta$ and optimizing, we obtain \eqref{eq:Holder-interpolation} on a finite atlas.

For $\varepsilon_j=\varepsilon_0 2^{-j}$ and $d_j=\varphi_{\varepsilon_{j+1}}-\varphi_{\varepsilon_j}$, the assumptions imply $\|d_j\|_\infty\leq C\varepsilon_j^a$ and $\|d_j\|_{C^{1,\gamma}}\leq C\varepsilon_j^{-b}$. It follows that
$$
 \|d_j\|_{C^{1,\beta}}
 \leq C\varepsilon_j^{
 [a(\gamma-\beta)-b(1+\beta)]/(1+\gamma)}.
$$
Under \eqref{eq:interpolation-range}, the geometric series on the right converges. Consequently, we obtain convergence of $\varphi_{\varepsilon_0}+\sum_{j=0}^\infty d_j$ in $C^{1,\beta}$, and its sum equals $\varphi$ by uniform convergence. For any prescribed exponent strictly below $(a-b)/(a+b)$, condition \eqref{eq:interpolation-range} holds if $\gamma<1$ is chosen sufficiently close to one. The lemma is proved.
\end{proof}

\subsection{Approximation by elliptic solutions}

In this subsection, we establish the initial $C^{1,\beta}$ estimate for the potential. The density is smoothed by the background heat operator, and we estimate the trace of the corresponding elliptic solutions. Cancellation in Green's formula controls the approximation error, and interpolation then gives the regularity estimate.

\begin{prop}
\label{prop:density-approximation}
For $F$ satisfying \eqref{eq:data} and a bounded mean-zero pluripotential solution $\varphi$ of \eqref{eq:MA}, we set
$$
 f_\varepsilon=P_{\varepsilon^2}e^F,\qquad
 F_\varepsilon=\log f_\varepsilon,\qquad 0<\varepsilon\leq1/4.
$$
The smooth normalized solution with density $f_\varepsilon$ is denoted by $\varphi_\varepsilon$. For every $\delta>0$, it satisfies
\beq
 \|\varphi_\varepsilon-\varphi\|_\infty
 \leq C\varepsilon^{1+\alpha}(1+|\log\varepsilon|),\qquad
 \|n+\Delta_g\varphi_\varepsilon\|_\infty
 \leq C_\delta\varepsilon^{-(1-\alpha)(1+\delta)}.
 \label{eq:density-approximation}
\eeq
In particular, $\|\varphi\|_{C^{1,\beta}}\leq C_\beta$ for each $0<\beta<\alpha$.
\end{prop}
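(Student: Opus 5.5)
We prove the three assertions in order: the $L^\infty$ approximation bound, then the trace bound, then the $C^{1,\beta}$ bound via Lemma~\ref{lem:interpolation}.

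\emph{Preliminary remarks.} Since $P_t$ is positivity preserving and mass preserving, $f_\varepsilon>0$ and $\int_M f_\varepsilon\,d\mu_g=1$, so Yau's theorem gives a smooth normalized solution $\varphi_\varepsilon$. By \eqref{eq:heat-alpha} applied to $e^F$, we have $\|F_\varepsilon-F\|_\infty\leq C\varepsilon^\alpha$ and $\|F_\varepsilon\|_{C^\alpha}\leq C$. Hence Lemma~\ref{lem:C0} and the Chen--Cheng estimate \eqref{eq:gradient-estimate} give $\|\varphi_\varepsilon\|_\infty+\|\Na\varphi_\varepsilon\|_\infty\leq C$ uniformly in $\varepsilon$. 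By \eqref{eq:heat-alpha}, we also have $\|\Na F_\varepsilon\|_\infty\leq C\varepsilon^{\alpha-1}$ and $\|\Delta_gF_\varepsilon\|_\infty\leq C\varepsilon^{\alpha-2}$.

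\emph{$L^\infty$ approximation.} Comparing two smooth approximants $\varphi_\varepsilon$ and $\varphi_{\varepsilon'}$ with $\varepsilon'<\varepsilon$, and then letting $\varepsilon'\to0$ (stability of pluripotential solutions in $L^\infty$ plus uniqueness), reduces the claim to a bound in terms of $\varepsilon$. Set $u=\varphi_\varepsilon-\varphi_{\varepsilon'}$. Write
\[
f_\varepsilon-f_{\varepsilon'}=\int_{\varepsilon'^2}^{\varepsilon^2}\Delta_gP_se^F\,ds=\Delta_g H,
\qquad H=\int_{\varepsilon'^2}^{\varepsilon^2}P_se^F\,ds-(\varepsilon^2-\varepsilon'^2)\cdot\text{const}.
\]
Green's formula for $u$ with respect to a mixed metric, or simply the identity $\omega_{\varphi_\varepsilon}^n-\omega_{\varphi_{\varepsilon'}}^n=\pbp u\wedge T$ with $T=\sum\omega_{\varphi_\varepsilon}^k\wedge\omega_{\varphi_{\varepsilon'}}^{n-1-k}$, then expresses the oscillation of $u$ against $\Delta_g H$. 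This is the cancellation: the difference of densities is a divergence. After integrating by parts once and using $|\Na\varphi|\leq C$, it is paired with $\Na$ of Green-type functions.

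Concretely, I would estimate $\sup u$ by testing against $G^{\varphi_\varepsilon}_x$ as in Lemma~\ref{lem:C0}. A cleaner route is the comparison principle with barrier $\varphi_{\varepsilon'}\pm(\lambda\tilde H)$, where $\tilde H$ solves $\Delta_g\tilde H=\log f_\varepsilon-\log f_{\varepsilon'}-\text{mean}$. By the heat estimates \eqref{eq:heat-time} and Schauder, $\|\tilde H\|_\infty\lesssim\sum_{\text{dyadic }s}s^{(1+\alpha)/2}$ times logarithmic factors, so $\|\tilde H\|_\infty\leq C\varepsilon^{1+\alpha}(1+|\log\varepsilon|)$; the logarithm comes from the dyadic sum in $s$ at the borderline $C^{2}$ scale. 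Making the barrier argument rigorous when $\tilde H$ is not small in $C^2$ is the delicate point. I expect to instead use the mixed Green representation
\[
u(x)-\bar u=\int G_x\,\Delta_{\varphi_\varepsilon}u\,d\mu,
\]
with $\Delta_{\varphi_\varepsilon}u\geq n-\tr_{\varphi_\varepsilon}\omega_{\varphi_{\varepsilon'}}$. Since concavity of $\log\det$ gives $\Delta_{\varphi_\varepsilon}u\geq F_\varepsilon-F_{\varepsilon'}$, the one-sided Green estimates yield $u\leq C\|F_\varepsilon-F_{\varepsilon'}\|$ in a negative-Sobolev sense. The gain from $\varepsilon^\alpha$ to $\varepsilon^{1+\alpha}$ then comes from writing $F_\varepsilon-F_{\varepsilon'}$ as $\Delta_g$ of something of size $\varepsilon^{2+\alpha}$ plus controlled errors, integrating by parts onto $G_x$, and using $\Na\varphi$ bounds together with $L^\nu$ bounds on $\Na G_x$. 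This cancellation step is the main obstacle.

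\emph{Trace bound.} I would use the Aubin--Yau/Chern--Lu computation for $\log v_{\varphi_\varepsilon}-A\varphi_\varepsilon$. The term $\Delta_gF_\varepsilon$ appears, bounded by $C\varepsilon^{\alpha-2}$; a naive maximum principle gives $v\leq C\varepsilon^{\alpha-2}$, which is too weak. To improve it, I would run the Chen--He style integral estimate: integrate $v^p\Delta_g F_\varepsilon$ by parts to get $\Na F_\varepsilon\cdot\Na v$, with $|\Na F_\varepsilon|\leq C\varepsilon^{\alpha-1}$. Moser iteration with the uniform Sobolev inequality \eqref{eq:Sobolev-estimate} then gives $v\leq C_\delta(\varepsilon^{\alpha-1})^{1+\delta}$, matching $(1-\alpha)(1+\delta)$.

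\emph{Conclusion.} By \eqref{eq:background-elliptic}, $\|\varphi_\varepsilon\|_{C^{1,\gamma}}\leq C_\delta\varepsilon^{-(1-\alpha)(1+\delta)}$ for every $\gamma<1$. Lemma~\ref{lem:interpolation} with $a=1+\alpha-\delta$ and $b=(1-\alpha)(1+\delta)$ admits all $\beta<(a-b)/(a+b)\to\alpha$ as $\delta\to0$.
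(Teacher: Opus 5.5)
Your final interpolation step matches the paper, and your trace bound uses the same Chen--He integral strategy (see the remark at the end). The central assertion, however, is left unproved: the rate $\|\varphi_\varepsilon-\varphi\|_\infty\le C\varepsilon^{1+\alpha}(1+|\log\varepsilon|)$. You call the cancellation step the main obstacle yourself, and none of the routes you sketch reaches it.

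\emph{The concavity route.} Its sign is reversed. AM--GM gives $\Delta_{\varphi_\varepsilon}u=n-\tr_{\varphi_\varepsilon}\omega_{\varphi_{\varepsilon'}}\le F_\varepsilon-F_{\varepsilon'}$, and the first relation is an identity, not an inequality. More seriously, this route cannot give the rate even with a perfect cancellation lemma. To use the heat structure you must replace $f_\varepsilon(F_\varepsilon-F_{\varepsilon'})$ by $f_\varepsilon-f_{\varepsilon'}$, which leaves a quadratic, essentially nonnegative remainder. In Green's formula with the nonpositive kernel $G_x-C_G$, this produces among other terms $-C_G\int_Mf_\varepsilon\log(f_\varepsilon/f_{\varepsilon'})\,d\mu_g$. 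That is a relative entropy comparable to $\|f_\varepsilon-f_{\varepsilon'}\|_{L^2}^2$, which for a general $C^\alpha$ density is of order $\varepsilon^{2\alpha}$. Since $2\alpha<1+\alpha$, the rate is capped at $\varepsilon^{2\alpha}$, and Lemma~\ref{lem:interpolation} then only gives $\beta<(3\alpha-1)/(1+\alpha)<\alpha$.

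\emph{The barrier route.} Comparison sees $\omega^n_{\varphi_{\varepsilon'}\pm\lambda\tilde H}$, that is, $\Delta_{\varphi_{\varepsilon'}}\tilde H$ and higher-order terms, not $\Delta_g\tilde H$. So the estimate on $\Delta_g\tilde H$ does not control the comparison.

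\emph{The gradient bounds.} The $L^\nu$ bounds on $\Na G_x$ you invoke exist (Guedj--T\^o) only for $|\Na G_x|_\varphi$. Passing to $|\Na G_x|_g$ costs the trace, which is as large as $\varepsilon^{-(1-\alpha)(1+\delta)}$ and destroys the gain.

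The paper supplies two missing ideas.

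First, it linearizes along the affine path of \emph{densities} $f_{\varepsilon,s}=(1-s)f_\varepsilon+sf_{2\varepsilon}$. Every $\varphi_{\varepsilon,s}$ has density in $[e^{-K},e^K]$ and a uniform $C^\alpha$ bound, hence a uniform Chen--Cheng gradient bound. The identity $\Delta_{\varphi_{\varepsilon,s}}\partial_s\varphi_{\varepsilon,s}=(f_{2\varepsilon}-f_\varepsilon)/f_{\varepsilon,s}$ is exact. Green's formula against $d\mu_{\varphi_{\varepsilon,s}}=f_{\varepsilon,s}\,d\mu_g$ therefore yields exactly $\int_MG_x(f_{2\varepsilon}-f_\varepsilon)\,d\mu_g$, where $f_{2\varepsilon}-f_\varepsilon=-(a-P_{3\varepsilon^2}a)$ with $a=f_\varepsilon$. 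There is no quadratic remainder and no trace.

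Second, Lemma~\ref{lem:Green-cancellation} bounds this pairing by $Ct^{(1+\alpha)/2}(1+|\log t|)$.
\begin{enumerate}
\item Symmetrize with the heat kernel and truncate $Z=C_G+1-G_x$ at height $N$.
\item Prove $\int_M|\Na Z_N|_g\,d\mu_g\le C(1+\log N)$ with no trace bound. This uses $\int_MvZ^\theta\,d\mu_g\le C/(1-\theta)$, obtained by integrating $\Delta_g\varphi$ by parts against $Z^\theta$ and using only $|\Na\varphi|\le B$ and $\int_MZ^{\theta-2}|\Na Z|_\varphi^2\,d\mu_\varphi\le(1-\theta)^{-1}$.
\item Lemma~\ref{lem:heat-W11} converts this $W^{1,1}$ bound into $t^{(1+\alpha)/2}$.
\item The tail $Z-Z_N$ is controlled by the $L^\nu$ bound on $G_x$, and the choice $N=t^{-(n-1)}$ produces the logarithm.
\end{enumerate}
Dyadic summation over scales then gives the stated rate.

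A smaller point concerns the trace bound, with $D=1+\|\Na F_\varepsilon\|_\infty$. Moser iteration yields $D^{1+\delta}$ only from a starting bound $\|v\|_{L^p}\le C_pD$, and your sketch does not say where this comes from. With the weight $\log v-A\varphi$ alone, the natural superlinear term comes from $q\ge cv^{1/(n-1)}$. That gives only a bound of order $D^{2(n-1)/n}$, which is too weak for $n\ge3$. The paper adds $|\Na\varphi|_g^2$ to the weight, so that $\mathcal T\ge v+q-2n$ supplies the term $cW^{m+1}$.
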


To prove Proposition~\ref{prop:density-approximation}, we first record the differential inequalities used in the elliptic trace estimate and the auxiliary flow. They follow by differentiating the logarithmic determinant and the gradient norm, commuting covariant derivatives, and applying the Cauchy--Schwarz inequality.

\begin{lem}
\label{lem:differential}
For a smooth K\"ahler potential $\varphi$, we set
$$
 H=\log f_\varphi,\qquad v=n+\Delta_g\varphi,\qquad
 q=\tr_\varphi g,\qquad S=|\Na\varphi|_g^2.
$$
At a given point, there are $g$-normal holomorphic coordinates in which $g_\varphi$ is diagonal, with eigenvalues $\lambda_i>0$. In these coordinates, we define
$$
 \mathcal T
 =\sum_{i,j}\frac{|\varphi_{ij}|^2+|\varphi_{i\bar j}|^2}
                       {\lambda_i}.
$$
Here all covariant derivatives are taken with respect to $g$. For a constant $C_g$ depending only on the background metric, the following inequalities hold
\beq
 \Delta_\varphi\log v\geq\frac{\Delta_gH}{v}-C_gq,
 \label{eq:trace-differential}
\eeq
\beq
 \Delta_\varphi S
 \geq\mathcal T-C_gSq
       +2\langle\Na\varphi,\Na H\rangle_g,
 \label{eq:gradient-differential}
\eeq
and, for every real smooth function $a$,
\beq
 \mathcal T\geq v+q-2n,\qquad
 |\Na a|_g^2\leq v|\Na a|_\varphi^2.
 \label{eq:algebraic-comparisons}
\eeq
\end{lem}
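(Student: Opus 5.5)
We prove the three statements separately, all by computing at a point $x$ in $g$-normal holomorphic coordinates in which $g_{\varphi}$ is diagonal, $g_{\varphi,i\bar j}=\lambda_i\delta_{ij}$. Since $g_{i\bar j}=\delta_{ij}$ at $x$, we have $\varphi_{i\bar j}=(\lambda_i-1)\delta_{ij}$, $v=\sum_i\lambda_i$ and $q=\sum_i\lambda_i^{-1}$. The constant $C_g$ will absorb bounds on the curvature tensor of $g$ and its first derivatives.

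\textbf{The algebraic comparisons \eqref{eq:algebraic-comparisons}.} For the second inequality, write $|\Na a|_g^2=\sum_i|a_i|^2=\sum_i\lambda_i\cdot\lambda_i^{-1}|a_i|^2$. Bounding each $\lambda_i$ by $v$ gives $|\Na a|_g^2\leq v|\Na a|_\varphi^2$. For the first inequality, drop the $|\varphi_{ij}|^2$ terms and the off-diagonal mixed terms, which are nonnegative. This leaves
$$
 \mathcal T\geq\sum_i\frac{(\lambda_i-1)^2}{\lambda_i}
 =\sum_i\Bigl(\lambda_i-2+\frac1{\lambda_i}\Bigr)=v+q-2n .
$$

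\textbf{The trace inequality \eqref{eq:trace-differential}.} This is the standard Aubin--Yau computation. Differentiate $H=\log\det(g_{i\bar j}+\varphi_{i\bar j})-\log\det g_{i\bar j}$ twice in the direction $k\bar k$ and sum over $k$. This gives
$$
 \Delta_gH=g_\varphi^{i\bar j}\varphi_{i\bar jk\bar k}
 -g_\varphi^{i\bar q}g_\varphi^{p\bar j}\varphi_{p\bar qk}\varphi_{i\bar j\bar k},
$$
up to curvature terms. Commuting covariant derivatives on the fourth-order term produces
$$
 \Delta_\varphi v=\Delta_gH+\sum_{i,j,k}\frac{|\varphi_{i\bar jk}|^2}{\lambda_i\lambda_j}+\mathcal R ,
 \qquad \mathcal R=\sum_{i,k}\frac{R_{i\bar ik\bar k}(\lambda_i-\lambda_k)}{\lambda_i}.
$$
The curvature term satisfies $\mathcal R\geq-C_g\,vq$. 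Since $\Delta_\varphi\log v=\Delta_\varphi v/v-|\Na v|_\varphi^2/v^2$, it remains to show
$$
 |\Na v|_\varphi^2\leq v\sum_{i,j,k}\frac{|\varphi_{i\bar jk}|^2}{\lambda_i\lambda_j}.
$$
This follows from Cauchy--Schwarz in the form $\bigl|\sum_i\varphi_{i\bar ik}\bigr|^2\leq\bigl(\sum_i\lambda_i\bigr)\sum_i|\varphi_{i\bar ik}|^2/\lambda_i$, together with the symmetry $\varphi_{i\bar ik}=\varphi_{k\bar ii}$ on a Kähler manifold. Dividing through by $v$ then yields \eqref{eq:trace-differential}.

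\textbf{The gradient inequality \eqref{eq:gradient-differential}.} Expand $S=g^{p\bar q}\varphi_p\varphi_{\bar q}$ and apply $\Delta_\varphi$. The second-derivative terms give exactly
$$
 \sum_{i,p}\frac{|\varphi_{pi}|^2+|\varphi_{p\bar i}|^2}{\lambda_i}=\mathcal T .
$$
The third-derivative terms are $2\operatorname{Re}\sum_{i,p}\lambda_i^{-1}\varphi_{pi\bar i}\varphi_{\bar p}$. Commuting derivatives, $\varphi_{pi\bar i}=\varphi_{i\bar ip}+R*\Na\varphi$. Differentiating the equation once gives $\sum_i\lambda_i^{-1}\varphi_{i\bar ip}=H_p$ at $x$, which produces the term $2\langle\Na\varphi,\Na H\rangle_g$. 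The curvature remainder is bounded by $C_g\sum_i\lambda_i^{-1}|\Na\varphi|^2=C_gSq$.

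The main obstacle is bookkeeping. In \eqref{eq:trace-differential} the error must be $C_gq$, with no $v$-dependent term beyond what the curvature produces. The third-order Cauchy--Schwarz step has to be done exactly as above, using the Kähler symmetry of $\varphi_{i\bar jk}$, so that no extra positive contribution is lost.
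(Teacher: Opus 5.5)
Your proposal is correct and follows the paper's proof essentially step for step. Both arguments use the same diagonalizing $g$-normal coordinates, Yau's identity for $\Delta_\varphi v$ with a curvature error bounded by $C_g vq$, and the Cauchy--Schwarz step together with the symmetry $\varphi_{i\bar i k}=\varphi_{k\bar i i}$ to absorb $|\nabla v|_\varphi^2/v$. They also share the commutation plus $\sum_i\lambda_i^{-1}\varphi_{i\bar i p}=H_p$ for $\Delta_\varphi S$, and the identities $\sum_i(\lambda_i-1)^2/\lambda_i=v+q-2n$ and $\lambda_i\leq v$ for the algebraic comparisons; only the bound $|\mathcal R|\leq C_g vq$ is used, so the sign convention in your explicit formula for $\mathcal R$ does not matter.
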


\begin{proof}
The following computation of $\Delta_\varphi v$ follows \cite{Yau}.
Differentiating $H=\log\det g_\varphi-\log\det g$ twice in the chosen coordinates and commuting covariant derivatives, we obtain
$$
 \Delta_\varphi v
 =\Delta_gH
  +\sum_{i,j,k}\frac{|\varphi_{i\bar j k}|^2}
                         {\lambda_i\lambda_j}
  +\mathcal E,
 \qquad |\mathcal E|\leq C_gvq.
$$
The curvature terms are controlled by the background curvature bound and $\sum_{i,k}\lambda_k/\lambda_i=vq$. The remaining constant terms are absorbed using $vq\geq n^2$. By symmetry of the holomorphic indices and the Cauchy--Schwarz inequality, we have
$$
 \frac{|\Na v|_\varphi^2}{v}
 =\sum_k\frac{|\sum_i\varphi_{i\bar i k}|^2}{\lambda_k v}
 \leq\sum_{i,k}\frac{|\varphi_{i\bar i k}|^2}
                         {\lambda_k\lambda_i}
 \leq\sum_{i,j,k}\frac{|\varphi_{i\bar j k}|^2}
                         {\lambda_i\lambda_j}.
$$
The last inequality follows from the identity $\varphi_{i\bar i k}=\varphi_{k\bar i i}$. We then differentiate $\log v$ to obtain \eqref{eq:trace-differential}.

Similarly, by differentiating $S$, commuting one pair of covariant derivatives, and using $\sum_i\lambda_i^{-1}\varphi_{i\bar i j}=H_j$, we obtain, as in \cite[p.~2076]{CC2},
$$
 \Delta_\varphi S
 =\mathcal T+2\langle\Na\varphi,\Na H\rangle_g+\mathcal E_1,
 \qquad |\mathcal E_1|\leq C_gSq.
$$
This proves \eqref{eq:gradient-differential}. For the first inequality in \eqref{eq:algebraic-comparisons}, it suffices to retain the diagonal mixed derivatives and use
$$
 \sum_i\frac{|\lambda_i-1|^2}{\lambda_i}
 =v+q-2n.
$$
The second inequality follows from $\lambda_i\leq v$ in each gradient component.
\end{proof}

The following lemma controls the trace of the smooth elliptic approximations in terms of the first derivatives of the logarithmic density. We test the differential inequality for a weighted trace and integrate by parts in the term containing $\Delta_gF$. The resulting integral estimate gives the supremum bound by Moser iteration.

\begin{lem}
\label{lem:elliptic-trace}
We assume that a smooth solution of \eqref{eq:MA} satisfies $\|F\|_\infty\leq K$ and $\|\varphi\|_{C^1}\leq B$. For $D=1+\|\Na F\|_\infty$, the following estimates hold for every finite $p\geq1$ and every $\delta>0$
\beq
 \|n+\Delta_g\varphi\|_{L^p(d\mu_g)}\leq C_pD,\qquad
 \|n+\Delta_g\varphi\|_\infty\leq C_\delta D^{1+\delta}.
 \label{eq:elliptic-trace-bounds}
\eeq
The constants depend only on $M,g,n,K,B$ and the exponents in these estimates.
\end{lem}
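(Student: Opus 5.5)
The plan is to test the differential inequality for a weighted trace against powers and integrate by parts, so that $\Delta_gF$ is never estimated pointwise. We set $u=e^{-A\varphi}v$, or work with $w=\log v-A\varphi$, where $A$ is a large constant depending on $C_g$. From \eqref{eq:trace-differential} and $\Delta_\varphi\varphi=n-q$ we get
$$
 \Delta_\varphi w\geq\frac{\Delta_gF}{v}+(A-C_g)q-An.
$$
Choosing $A=C_g+1$ gives $\Delta_\varphi w\geq \Delta_gF/v+q-C$. The density satisfies $e^{-K}\leq f_\varphi\leq e^K$, so \eqref{eq:Green-estimate} and \eqref{eq:Sobolev-estimate} hold with constants depending on $K$. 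The main difficulty is the term $\Delta_gF/v$, which is not controlled by $D$.

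For the $L^p$ bound, we multiply by $u^{p}$ with $u=e^{w}=e^{-A\varphi}v$ and integrate against $d\mu_\varphi=e^Fd\mu_g$. The bad term becomes
$$
 \int_M\Delta_gF\,e^{-A\varphi}u^{p-1}e^F\,d\mu_g .
$$
We integrate it by parts with respect to $g$. The derivative then falls on $F$ (contributing $|\Na F|^2\leq D^2$), on $\varphi$ (bounded by $B$), or on $u^{p-1}$. The last case produces $\int |\Na F|_g|\Na u|_g u^{p-2}$. By \eqref{eq:algebraic-comparisons}, $|\Na u|_g^2\leq v|\Na u|_\varphi^2$, and Cauchy--Schwarz absorbs this into the good gradient term
$$
 p\int u^{p-1}|\Na w|_\varphi^2 u\,d\mu_\varphi
$$
coming from the left side, at the cost of $C_pD^2\int u^{p}\,v\,v^{-1}$-type terms. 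We also keep the good term $\int q u^p$. Since $q\geq v^{1/(n-1)}e^{-F/(n-1)}$, this gains a power over $u^p$. Young's inequality then absorbs the lower-order terms and yields $\int u^{p+\theta}\leq C_p(D^2\int u^{p-1}+1)$ for some $\theta>0$. Iterating in $p$ from $p=0$, where $\int v\,d\mu_g=n$ gives the start, we expect $\|v\|_{L^p}\leq C_pD$. The scaling matches because $\Delta_gF/v$ is linear in $F$ and the gradient absorption gives $D^2$ against an extra power of $u$, so the linear dependence on $D$ should come out after taking roots. I expect the careful exponent bookkeeping to deliver exactly $D$ (rather than $D^{1+\delta}$) to be the main technical obstacle. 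If it does not, we would first aim for $C_pD^{1+\delta}$, which suffices for the later uses.

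For the $L^\infty$ bound, we run Moser iteration using \eqref{eq:Sobolev-estimate} with $b=u^{(p+1)/2}$. The same integration by parts gives an energy inequality of the form
$$
 \|u^{(p+1)/2}\|_{L^{2\nu}}^2\leq Cp^2\int(1+D^2v^{-1}\cdots)u^{p}\,|\cdots|\,d\mu_\varphi .
$$
Here the coefficient $|\Na F|_g^2 v$ is controlled in $L^s$ for every finite $s$ by the $L^p$ bound already obtained. Because the Sobolev gain $\nu$ is fixed and the coefficient lies in every $L^s$, Hölder's inequality with large $s$ closes the iteration. This gives $\|u\|_\infty\leq C\,(\text{coefficient in }L^s)^{\kappa}\|u\|_{L^{p_0}}$, where the power loss $\kappa$ tends to $1$ as $s\to\infty$, which produces the $D^{1+\delta}$.
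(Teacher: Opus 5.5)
There is a genuine gap in your $L^p$ step, and it lies in the source of coercivity. The only superlinear term you keep is $\int_M qu^p\,d\mu_\varphi$. The bound $q\geq c\,v^{1/(n-1)}$ gains only the power $\theta=1/(n-1)$, and this is sharp pointwise: take one eigenvalue of size $v$ and the others of size $v^{-1/(n-1)}$. Your inequality $\int_M u^{p+\theta}\leq C_p(1+D^2\int_M u^{p-1})$ therefore balances $u^{p+\theta}$ against $D^2u^{p-1}$. That gives only $\|u\|_{L^p}\lesssim D^{2/(1+\theta)}=D^{2(n-1)/n}$ for large $p$. Starting the iteration from $\int_M v\,d\mu_g=n$ improves only small exponents.

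Your scaling heuristic needs $u^{p+1}$ against $D^2u^{p-1}$, i.e. a full extra power. As written, your argument proves the lemma only for $n=2$, where $q=e^{-F}v$. For $n\geq3$ the loss is a fixed power, not a $\delta$-loss, so your Moser step would give $D^{2(n-1)/n+\delta}$, not the fallback $D^{1+\delta}$. This is not harmless later. In Proposition~\ref{prop:density-approximation} the exponent $b$ would become $\frac{2(n-1)}{n}(1-\alpha)(1+\delta)$, and the admissible range in Lemma~\ref{lem:interpolation} is then empty, for example when $n=3$ and $\alpha\leq1/7$.

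The missing idea is to use the hypothesis $\|\varphi\|_{C^1}\leq B$ to gain a full power of $v$; in your scheme $B$ enters only a lower-order term.

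The paper does this by adding $S=|\nabla\varphi|_g^2$ to the weight, $W=e^{S-A\varphi}v$. By \eqref{eq:gradient-differential} and $\mathcal T\geq v+q-2n$ from \eqref{eq:algebraic-comparisons}, one gets $\Delta_\varphi W\geq\frac12W\mathcal T+cW^2-CDW+E\Delta_gF$. Integrating $\Delta_gF$ by parts produces a new term $\langle\nabla Q,\nabla F\rangle_g$. It is absorbed by $\frac14\int_MW^m\mathcal T\,d\mu_\varphi$ through $|\nabla Q|_g^2\leq C(1+v\mathcal T)$. The result is $cX^{m+1}\leq CDX^m+CmD^2X^{m-1}$, hence $X\leq C\sqrt m\,D$.

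Alternatively, you can keep $u=e^{-A\varphi}v$ and your simpler integration by parts, and extract the full power from the gradient term you already have:
\begin{itemize}
\item Since $|\nabla u|_g^2\leq v|\nabla u|_\varphi^2$ and $v\leq Cu$, the quantity $\int_Mv^{m-3}|\nabla v|_g^2\,d\mu_g$ is bounded by $C_m\int_Mu^{m-2}|\nabla u|_\varphi^2\,d\mu_\varphi+C_m\int_Mv^{m-1}\,d\mu_g$.
\item Integrating $\int_Mv^{m+1}\,d\mu_g=n\int_Mv^m\,d\mu_g-m\int_Mv^{m-1}\langle\nabla v,\nabla\varphi\rangle_g\,d\mu_g$ by parts with $|\nabla\varphi|_g\leq B$ and Cauchy--Schwarz gives $\int_Mv^{m+1}\,d\mu_g\leq2n\int_Mv^m\,d\mu_g+m^2B^2\int_Mv^{m-3}|\nabla v|_g^2\,d\mu_g$.
\end{itemize}
Either route balances $u^{m+1}$ against $D^2u^{m-1}$ and yields $\|v\|_{L^p}\leq C_pD$. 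Your Moser step then loses only a factor $D^{C/p_0}$, as in the paper.
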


\begin{proof}
With the notation of Lemma~\ref{lem:differential}, we choose $A$ sufficiently large in terms of $C_g$ and $B$, and set
$$
 Q=S-A\varphi,\qquad E=e^Q,\qquad W=Ev.
$$
The function $E$ satisfies $C^{-1}\leq E\leq C$. Since $\Delta_\varphi\varphi=n-q$, it follows that
$$
 \Delta_\varphi\log W
 = \Delta_\varphi(S-A\varphi+\log v)
 \geq\mathcal T+q-CD+\frac{\Delta_gF}{v}.
$$
Combining half of the inequality $\mathcal T\geq v+q-2n$ with the identity for $\Delta_\varphi W$, we obtain the differential inequality
\beq
 \Delta_\varphi W
 \geq\frac12W\mathcal T+cW^2-CDW+E\Delta_gF.
 \label{eq:elliptic-W}
\eeq
The gradient terms satisfy
\beq
 |\Na Q|_g^2\leq C(1+v\mathcal T),\qquad
 |\Na W|_g^2\leq v|\Na W|_\varphi^2.
 \label{eq:elliptic-gradients}
\eeq
Indeed, we have $|\Na S|_g\leq C B|\Na^2\varphi|_g$ and $|\Na^2\varphi|_g^2\leq Cv\mathcal T$.

The integral method of \cite{CH} applies to the differential inequality \eqref{eq:elliptic-W}.
Multiplying \eqref{eq:elliptic-W} by $W^{m-1}$ for a real number $m\geq2$ and integrating by parts with respect to $d\mu_\varphi$, we obtain
\beqn
 &&(m-1)\int_MW^{m-2}|\Na W|_\varphi^2\,d\mu_\varphi
  +\frac12\int_MW^m\mathcal T\,d\mu_\varphi
  +c\int_MW^{m+1}\,d\mu_\varphi \no\\
 &&\hspace{15mm}\leq CD\int_MW^m\,d\mu_\varphi+I,
 \qquad
 I=-\int_MEW^{m-1}\Delta_gF\,d\mu_\varphi.
 \label{eq:elliptic-energy-start}
\eeqn
Since $d\mu_\varphi=e^Fd\mu_g$ in the last integral, integration by parts requires differentiating both the density and $EW^{m-1}$. Thus
\beqs
 I&=&(m-1)\int_MEW^{m-2}
                    \langle\Na W,\Na F\rangle_g\,d\mu_\varphi\\
 &&+\int_MEW^{m-1}
                    \langle\Na Q,\Na F\rangle_g\,d\mu_\varphi
   +\int_MEW^{m-1}|\Na F|_g^2\,d\mu_\varphi.
\eeqs
By \eqref{eq:elliptic-gradients} and Young's inequality, the first integral is bounded above by
$$
 \frac{m-1}{2}\int_MW^{m-2}|\Na W|_\varphi^2\,d\mu_\varphi
       +CmD^2\int_MW^{m-1}\,d\mu_\varphi.
$$
The second integral is bounded above by
$$
 \frac14\int_MW^m\mathcal T\,d\mu_\varphi
       +CD^2\int_MW^{m-1}\,d\mu_\varphi.
$$
Here the term without $\mathcal T$ is controlled using $D\leq D^2$. The third integral is at most $CD^2\int_MW^{m-1}\,d\mu_\varphi$. Combining these bounds, we obtain
\beqn
 &&\frac{m-1}{2}\int_MW^{m-2}|\Na W|_\varphi^2\,d\mu_\varphi
  +\frac14\int_MW^m\mathcal T\,d\mu_\varphi
  +c\int_MW^{m+1}\,d\mu_\varphi \no\\
 &&\hspace{12mm}\leq
 CD\int_MW^m\,d\mu_\varphi
       +CmD^2\int_MW^{m-1}\,d\mu_\varphi.
 \label{eq:elliptic-energy}
\eeqn
For $X=\|W\|_{L^{m+1}(d\mu_\varphi)}$, it follows from the normalization $\int_Md\mu_\varphi=1$ that
$$
 cX^{m+1}\leq CDX^m+CmD^2X^{m-1}.
$$
Dividing by $X^{m-1}>0$ and solving the resulting quadratic inequality, we obtain $X\leq C\sqrt mD$. The first estimate in \eqref{eq:elliptic-trace-bounds} follows from H\"older's inequality and the equivalence of the two measures.

To prove the supremum estimate, we set $m=p+1$ in \eqref{eq:elliptic-energy}. For $p\geq2$, it follows from \eqref{eq:elliptic-gradients} and $d\mu_g\leq e^K d\mu_\varphi$ that
\beq
 \int_M|\Na(W^{p/2})|_g^2\,d\mu_g
 \leq Cp^2\int_M(DW+D^2)W^p\,d\mu_g.
 \label{eq:elliptic-Moser-energy}
\eeq
Since $D\geq1$, the zeroth-order term in the background Sobolev inequality can be included by increasing $C$. We set $\chi=n/(n-1)$ and $b=W^{p/2}$. The first estimate gives $\|DW+D^2\|_{L^{2n}(d\mu_g)}\leq CD^2$. Hence
$$
 \|b\|_{L^{2\chi}}^2
 \leq Cp^2D^2\|b\|_{L^{4n/(2n-1)}}^2
 \leq Cp^2D^2\|b\|_{L^{2\chi}}\|b\|_{L^2}.
$$
The last inequality follows by interpolation between $L^2$ and $L^{2\chi}$ with equal weights. After division and squaring, the inequality becomes
$$
 \|W\|_{L^{\chi p}}\leq (Cp^4D^4)^{1/p}\|W\|_{L^p}.
$$
For the Moser iteration, we take $p_j=p_0\chi^j$. Since $\sum_{j\geq0}p_j^{-1}=n/p_0$ and $\sum_{j\geq0}p_j^{-1}\log p_j<\infty$, it follows that
$$
 \|W\|_\infty
 \leq C_{p_0}D^{4n/p_0}\|W\|_{L^{p_0}}
 \leq C_{p_0}D^{1+4n/p_0}.
$$
This completes the proof when $p_0>\max\{2,4n/\delta\}$, since $C^{-1}v\leq W\leq Cv$.
\end{proof}

To estimate the error in the elliptic approximation, we need a bound for the background gradient of a truncated Green function without assuming a trace bound. The proof uses weighted tests in the Green equation and integration by parts with the gradient bound for the potential. The remainder is controlled by the integral bound for the Green function.

\begin{lem}
\label{lem:truncated-Green}
For a smooth K\"ahler potential $\varphi$ satisfying \eqref{eq:density-bounds} and $\|\Na\varphi\|_\infty\leq B$, we define, for $x\in M$ and $N\geq2$,
$$
 Z=C_G+1-G_x^\varphi,\qquad Z_N=\min\{Z,N\}.
$$
These functions satisfy
\beq
 \int_M|\Na Z_N|_g\,d\mu_g\leq C(1+\log N),\qquad
 \int_M(Z-Z_N)\,d\mu_g\leq C_\nu N^{1-\nu},
 \quad 1<\nu<\frac n{n-1}.
 \label{eq:Green-truncations}
\eeq
The constant $C$ depends on $M,g,n,c,C_0,B$, and $C_\nu$ may also depend on $\nu$. Neither constant depends on $x,N$ or the trace of $g_\varphi$.
\end{lem}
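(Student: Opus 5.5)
The second bound in \eqref{eq:Green-truncations} is elementary, and I would dispose of it first. Since $Z\geq1$, we have $Z-Z_N\leq Z\,\mathbf 1_{\{Z>N\}}\leq N^{1-\nu}Z^\nu$. Moreover $Z^\nu\leq C(1+|G_x^\varphi|^\nu)$. Integrating and using \eqref{eq:Green-estimate}, with $d\mu_\varphi$ replaced by $d\mu_g$ via \eqref{eq:density-bounds}, gives $\int_M(Z-Z_N)\,d\mu_g\leq C_\nu N^{1-\nu}$.

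For the gradient bound, the difficulty is that the Green function only controls $\varphi$-gradients. Converting by $|\Na a|_g^2\leq v|\Na a|_\varphi^2$ from \eqref{eq:algebraic-comparisons} would need a trace bound. The plan is to spend $v$ only after multiplying by a weight, and to pay for $\int vh\,d\mu_g$ by integrating by parts against the Lipschitz bound for $\varphi$. Fix $\epsilon\in(0,1)$, to be chosen as $\epsilon=1/\log N$ (so $\epsilon\leq 1/\log 2$). By Cauchy--Schwarz,
$$
 X:=\int_M|\Na Z_N|_g\,d\mu_g
 \leq\Bigl(\int_M vZ_N^{1+\epsilon}\,d\mu_g\Bigr)^{1/2}
      \Bigl(\int_M Z_N^{-1-\epsilon}|\Na Z_N|_\varphi^2\,d\mu_g\Bigr)^{1/2}.
$$

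The second factor is controlled by the weighted identity proved in Section~\ref{sec:preliminaries}, namely $\int_M Z^{-1-\epsilon}|\Na Z|_\varphi^2\,d\mu_\varphi\leq\epsilon^{-1}$. We have $\Na Z_N=\Na Z$ on $\{Z<N\}$ and $\Na Z_N=0$ almost everywhere elsewhere, and $d\mu_g\leq c^{-1}d\mu_\varphi$. Hence the second factor is at most $C\epsilon^{-1}$.

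For the first factor, write $v=n+\Delta_g\varphi$ and integrate by parts against the Lipschitz function $Z_N^{1+\epsilon}$:
$$
 \int_M vZ_N^{1+\epsilon}\,d\mu_g
 =n\int_MZ_N^{1+\epsilon}\,d\mu_g-(1+\epsilon)\int_MZ_N^{\epsilon}\langle\Na Z_N,\Na\varphi\rangle_g\,d\mu_g
 \leq C+2BN^{\epsilon}X.
$$
Here $\int Z_N^{1+\epsilon}\,d\mu_g\leq\int Z^\nu\,d\mu_g\leq C$ once $1+\epsilon\leq\nu$ for a fixed admissible $\nu$. This holds when $N$ is large; for bounded $N$ one instead uses $Z_N^{1+\epsilon}\leq N^2$. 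With $\epsilon=1/\log N$ we get $N^\epsilon=e$. Combining the two factors gives $X^2\leq C\log N\,(C+CX)$, and hence $X\leq C(1+\log N)$, as claimed.

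The main obstacle I expect is rigor rather than structure. First, one must justify that $X$ is finite a priori, so that the quadratic inequality can be solved. This holds because $Z_N$ is Lipschitz for the fixed smooth metric, as the pole is cut off by the truncation. Second, one must check that the weighted identity transfers to $Z_N$, with the singularity handled by excising a small ball as in Section~\ref{sec:preliminaries}. Finally, all constants must be tracked so that they depend only on $M,g,n,c,C_0,B$, and not on $x$, $N$ or the trace.
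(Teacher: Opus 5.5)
Your argument is correct, but it is organized differently from the paper's.

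\textbf{The paper's route.} It pairs the weights $Z^{\theta}$ and $Z^{-\theta}$ with $\theta=1-1/\log N<1$. It first bounds the untruncated trace moment $J_\theta=\int_M vZ^\theta\,d\mu_g\leq C/(1-\theta)$. This uses integration by parts, Cauchy--Schwarz against $\int_M Z^{\theta-2}|\Na Z|_\varphi^2\,d\mu_\varphi\leq(1-\theta)^{-1}$, and absorption of $J_\theta^{1/2}$, after an auxiliary truncation at level $R$ and monotone convergence. It then needs a second test function $b_N(Z)-b_N(\infty)$ to obtain the truncated energy bound $\int_{\{Z<N\}}Z^{-\theta}|\Na Z|_\varphi^2\,d\mu_\varphi\leq N^{1-\theta}/(1-\theta)$.

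\textbf{Your route.} You put the larger weight $Z_N^{1+\epsilon}$ on the trace side. The gradient side $Z^{-1-\epsilon}|\Na Z|_\varphi^2$ is then integrable up to the pole, so the single weighted identity suffices. The truncation enters only through $Z_N^\epsilon\leq N^\epsilon$ after integrating by parts. You absorb $X$ itself through a quadratic inequality, and $X<\infty$ a priori because $Z_N$ is Lipschitz for the fixed smooth metric.

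\textbf{What each buys.} Your version needs $\int_M Z_N^{1+\epsilon}\,d\mu_g\leq C$, which uses the $L^\nu$ bound \eqref{eq:Green-estimate} with $\nu>1$. The paper's trace-side weight $Z^\theta$ with $\theta<1$ needs only $\int_M Z\,d\mu_\varphi=C_G+1$. Your version also does not produce the paper's standalone estimate $J_\theta\leq C/(1-\theta)$, which is uniform in $N$. Since the $L^\nu$ bound is needed anyway for the tail estimate, your version costs nothing extra here. It is shorter, with one test function and no limit in $R$.

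\textbf{A small inconsistency.} Your choice $\epsilon=1/\log N$ exceeds $1$ when $2\leq N<e$. This contradicts your stipulation $\epsilon\in(0,1)$ and your bound $Z_N^{1+\epsilon}\leq N^2$. It is harmless: the weighted identity $\int_M Z^{-1-\epsilon}|\Na Z|_\varphi^2\,d\mu_\varphi=\epsilon^{-1}\int_M Z^{-\epsilon}\,d\mu_\varphi$ holds for every $\epsilon>0$, and $N^{1+\epsilon}=eN$. Alternatively, take $\epsilon=1/2$ for bounded $N$, as the paper takes $\theta=1/2$.
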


\begin{proof}
It follows from the definition \eqref{eq:Green-normalization} and estimate \eqref{eq:Green-estimate} that
\begin{align*}
Z\ge1,\qquad
(\Delta_\varphi Z)d\mu_\varphi=d\mu_\varphi-\delta_x,
\qquad
\int_M Z\,d\mu_\varphi=C_G+1.
\end{align*}
Consequently,
\[
\int_M Z^\theta\,d\mu_g
\le \int_M Z\,d\mu_g
\le c^{-1}(C_G+1),
\qquad\text{ uniformly for }0<\theta<1.
\]
For each fixed smooth metric $g_\varphi$, the function $Z$ is smooth on
$M\setminus\{x\}$ and satisfies $Z(y)\to+\infty$ as $y\to x$. Thus, for each
finite $R>1$, the function
\[
 Z_R=\min\{Z,R\}
\]
is constant on a neighborhood of $x$ and is Lipschitz on $M$. The
Lipschitz chain rule implies
\[
 \nabla Z_R=\mathbf{1}_{\{Z<R\}}\nabla Z
 \qquad\text{almost everywhere}.
\]
The values of gradients on a level set are understood in this
almost-everywhere sense.

\textbf{Step 1.}
For $0<\theta<1$, the function $Z^{\theta-1}$ tends to zero at the pole. With this test function, we obtain
\beq
 \int_M Z^{\theta-2}|\Na Z|_\varphi^2\,d\mu_\varphi
 =\frac1{1-\theta}\int_MZ^{\theta-1}\,d\mu_\varphi
 \leq\frac1{1-\theta}.
 \label{eq:Green-weighted-energy}
\eeq
This calculation is justified as follows.
For a test function $b$ that is smooth away from the pole and vanishes near it, the weak form of the equation $(\Delta_\varphi Z)d\mu_\varphi=d\mu_\varphi-\delta_x$ reads
\begin{equation}\label{eq:weak}
-\int_M\langle\nabla b,\nabla Z\rangle_\varphi\,d\mu_\varphi
=\int_M b\,d\mu_\varphi-b(x).
\end{equation}
We fix $R>1$ and define
\[
b_R=\bigl(Z^{\theta-1}-R^{\theta-1}\bigr)_+
=\begin{cases}
Z^{\theta-1}-R^{\theta-1},& Z<R,\\
0,&Z\ge R.
\end{cases}
\]
Since $Z(y)\to\infty$ as $y\to x$, the function $b_R$ vanishes on a neighborhood of $x$. It is Lipschitz on $M$ and can be approximated by smooth functions supported away from $x$. Hence \eqref{eq:weak} applies with $b=b_R$, and $b_R(x)=0$. The chain rule for Lipschitz truncations gives, almost everywhere,
\[
\nabla b_R=(\theta-1)\mathbf{1}_{\{Z<R\}}Z^{\theta-2}\nabla Z.
\]
Therefore, we have
\begin{equation}\label{eq:truncated}
(1-\theta)\int_{\{Z<R\}}Z^{\theta-2}|\nabla Z|_\varphi^2\,d\mu_\varphi
=\int_M\bigl(Z^{\theta-1}-R^{\theta-1}\bigr)_+\,d\mu_\varphi.
\end{equation}
Both integrands are nonnegative. As $R\to\infty$, the sets $\{Z<R\}$ increase to $M\setminus\{x\}$ and the right-hand integrand increases pointwise to $Z^{\theta-1}$ away from the pole. By monotone convergence,
\[
(1-\theta)\int_M Z^{\theta-2}|\nabla Z|_\varphi^2\,d\mu_\varphi
=\int_M Z^{\theta-1}\,d\mu_\varphi.
\]
Finally, since $Z\ge1$ and $\theta-1<0$, we have
$
0<Z^{\theta-1}\le1$ and $\int_M Z^{\theta-1}\,d\mu_\varphi\le\int_M1\,d\mu_\varphi=1.
$
This proves \eqref{eq:Green-weighted-energy}.

\textbf{Step 2.}
We write $v=\operatorname{tr}_g g_\varphi=n+\Delta_g\varphi>0$. Then
\begin{align}\label{J theta bound}
 J_\theta=\int_MvZ^\theta\,d\mu_g\leq\frac{C}{1-\theta},
 \qquad 0<\theta<1,
\end{align}
The constant $C$ is independent of the pointwise size of $v$. For fixed $R$, we first truncate by setting
\[
 J_{\theta,R}=\int_MvZ_R^\theta\,d\mu_g.
\]
The background integration by parts formula
gives
\[
 \begin{aligned}
 J_{\theta,R}
 &=n\int_MZ_R^\theta\,d\mu_g
      +\int_MZ_R^\theta\Delta_g\varphi\,d\mu_g
 =n\int_MZ_R^\theta\,d\mu_g
      -\int_M\langle\nabla(Z_R^\theta),\nabla\varphi\rangle_g
                                                  \,d\mu_g\\
 &=n\int_MZ_R^\theta\,d\mu_g
      -\theta\int_{\{Z<R\}}Z^{\theta-1}
           \langle\nabla\varphi,\nabla Z\rangle_g\,d\mu_g.
 \end{aligned}
\]
The first integral is
bounded by $n c^{-1}(C_G+1)$.
To estimate the second integral, we use the pointwise comparison
$|\nabla Z|_g\leq v^{1/2}|\nabla Z|_\varphi$.
The gradient bound $|\nabla\varphi|_g\leq B$ then gives
\[
 J_{\theta,R}
 \leq n c^{-1}(C_G+1)
  +B\theta\int_{\{Z<R\}}
        Z^{\theta-1}v^{1/2}|\nabla Z|_\varphi\,d\mu_g.
\]
The weights are arranged so that
$
 Z^{\theta-1}v^{1/2}|\nabla Z|_\varphi
 =(vZ^\theta)^{1/2}
   \bigl(Z^{\theta-2}|\nabla Z|_\varphi^2\bigr)^{1/2}.
$
Applying Cauchy--Schwarz, the
measure comparison, and \eqref{eq:Green-weighted-energy}, we obtain
\[
 \begin{aligned}
 \int_{\{Z<R\}}Z^{\theta-1}v^{1/2}|\nabla Z|_\varphi\,d\mu_g
&\leq
  \left(\int_{\{Z<R\}}vZ^\theta\,d\mu_g\right)^{1/2}
  \left(\int_{\{Z<R\}}Z^{\theta-2}|\nabla Z|_\varphi^2
                                             \,d\mu_g\right)^{1/2}\\
 &\quad\leq c^{-1/2}J_{\theta,R}^{1/2}
  \left(\int_MZ^{\theta-2}|\nabla Z|_\varphi^2
                                    \,d\mu_\varphi\right)^{1/2}
\leq\frac{c^{-1/2}J_{\theta,R}^{1/2}}{\sqrt{1-\theta}}.
 \end{aligned}
\]
In the second line of inequalities, the first factor is bounded by
$J_{\theta,R}^{1/2}$ because $Z_R=Z$ on $\{Z<R\}$ and $v>0$. The measure
is changed only in the second factor.
Using $\theta\leq1$ and Young's inequality, we conclude that
\[
 J_{\theta,R}
 \leq n c^{-1}(C_G+1)
       +\frac{B c^{-1/2}}{\sqrt{1-\theta}}J_{\theta,R}^{1/2}
 \leq n c^{-1}(C_G+1)
+\frac12J_{\theta,R}+\frac{B^2}{2c(1-\theta)}.
\]
The explicit bound follows by moving the $J_{\theta,R}/2$ term to the left
\[
 J_{\theta,R}
 \leq 2n c^{-1}(C_G+1)+\frac{B^2}{c(1-\theta)}
 \leq\frac{2n c^{-1}(C_G+1)+B^2/c}{1-\theta}.
\]
Thus the constant is uniform in $R$ and $\theta$. Since
$vZ_R^\theta\uparrow vZ^\theta$, monotone convergence now proves \eqref{J theta bound}.

\textbf{Step 3.}
For $N\geq2$ and $a\geq1$, we define
\[
 b_N(a)=\int_1^{\min\{a,N\}}\xi^{-\theta}\,d\xi
       =\frac{\min\{a,N\}^{1-\theta}-1}{1-\theta}.
\]
This function is increasing
until $a=N$ and is then constant. More explicitly,
\[
 b_N'(a)=
 \begin{cases}
  a^{-\theta},&1<a<N,\\
  0,&a>N,
 \end{cases}
 \qquad
 b_N(\infty)=\frac{N^{1-\theta}-1}{1-\theta}.
\]
The composition $b_N(Z)$ is Lipschitz and constant near $x$. Its value
at the pole is $b_N(\infty)$, and
\[
 \nabla(b_N(Z))
 =\mathbf{1}_{\{Z<N\}}Z^{-\theta}\nabla Z
 \qquad\text{almost everywhere}.
\]
To apply the weak equation from Step~1, we use $b=b_N(Z)-b_N(\infty)$, which vanishes near $x$.
Since $\nabla b=\nabla(b_N(Z))$ and $\int_Md\mu_\varphi=1$, the weak identity gives
\[
 -\int_{\{Z<N\}}Z^{-\theta}|\nabla Z|_\varphi^2\,d\mu_\varphi
 =-\int_M\langle\nabla Z,\nabla(b_N(Z))\rangle_\varphi
                                                \,d\mu_\varphi
 =\int_Mb_N(Z)\,d\mu_\varphi-b_N(\infty).
\]
Rearranging and using
$b_N(Z)\geq0$, we obtain
 \begin{align}\label{Z theta bound}
 \int_{\{Z<N\}}Z^{-\theta}|\nabla Z|_\varphi^2\,d\mu_\varphi
 \leq b_N(\infty)
=\frac{N^{1-\theta}-1}{1-\theta}
 \leq\frac{N^{1-\theta}}{1-\theta}.
 \end{align}

\textbf{Step 4.}
The gradient comparison gives
$$
|\nabla Z|_g
 \leq v^{1/2}|\nabla Z|_\varphi
 =(vZ^\theta)^{1/2}
            \bigl(Z^{-\theta}|\nabla Z|_\varphi^2\bigr)^{1/2}.
$$
Combining \eqref{J theta bound} with \eqref{Z theta bound}, we obtain
 \begin{align*}
 \int_M|\nabla Z_N|_g\,d\mu_g
 &=\int_{\{Z<N\}}|\nabla Z|_g\,d\mu_g
 \leq\left(\int_{\{Z<N\}}vZ^\theta\,d\mu_g\right)^{1/2}
       \left(\int_{\{Z<N\}}Z^{-\theta}|\nabla Z|_\varphi^2
                                                 \,d\mu_g\right)^{1/2}\\
 &\leq c^{-1/2}J_\theta^{1/2}
       \left(\int_{\{Z<N\}}Z^{-\theta}|\nabla Z|_\varphi^2
                                           \,d\mu_\varphi\right)^{1/2}
 \leq\frac{C N^{(1-\theta)/2}}{1-\theta}.
 \end{align*}
The constant $C$ in the last line is independent of
$\theta\in(0,1)$, $N$, and $x$.
For $N\geq e^2$, we take
$
 \theta=1-\frac{1}{\log N}.
$
This choice ensures $1/2\leq\theta<1$ and
\[
 N^{(1-\theta)/2}
 =\exp\!\left(\frac{1-\theta}{2}\log N\right)=e^{1/2},
 \qquad \frac{1}{1-\theta}=\log N.
\]
Hence
\[
 \int_M|\nabla Z_N|_g\,d\mu_g\leq C\log N, \quad N\geq e^2.
\]
For $2\leq N<e^2$, we take $\theta=1/2$. The bound before the choice
of exponent gives
\[
 \int_M|\nabla Z_N|_g\,d\mu_g
 \leq2C N^{1/4}\leq2C e^{1/2}.
\]
After increasing $C$, these two ranges give the first estimate in \eqref{eq:Green-truncations}.

\textbf{Step 5.}
We fix $1<\nu<n/(n-1)$. Since
$Z\leq C_G+1+|G_x^\varphi|$, convexity gives
 \begin{equation}\label{Z nu bound}
 \begin{split}
 \int_MZ^\nu\,d\mu_g
 &\leq2^{\nu-1}\left((C_G+1)^\nu
                   +\int_M|G_x^\varphi|^\nu\,d\mu_g\right)\\
 &\leq2^{\nu-1}\left((C_G+1)^\nu
                   +c^{-1}\int_M|G_x^\varphi|^\nu\,d\mu_\varphi\right)
 \leq C_\nu.
 \end{split}
 \end{equation}
The last inequality uses the
$L^\nu$ estimate for the Green function \eqref{eq:Green-estimate}.
The function $Z-Z_N$ vanishes on $\{Z\leq N\}$ and equals $Z-N$ on
$\{Z>N\}$. On the latter set, since $1-\nu<0$, we have
\[
 Z^{1-\nu}\leq N^{1-\nu},
 \qquad Z=Z^\nu Z^{1-\nu}\leq N^{1-\nu}Z^\nu.
\]
By \eqref{Z nu bound}, we obtain
\[
 \begin{aligned}
 &\int_M(Z-Z_N)\,d\mu_g
 =\int_{\{Z>N\}}(Z-N)\,d\mu_g
 \leq\int_{\{Z>N\}}Z\,d\mu_g\\
 &\leq N^{1-\nu}\int_{\{Z>N\}}Z^\nu\,d\mu_g
 \leq N^{1-\nu}\int_MZ^\nu\,d\mu_g
 \leq C_\nu N^{1-\nu}.
 \end{aligned}
\]
This proves the second estimate in \eqref{eq:Green-truncations}.
\end{proof}

The following lemma converts a background $W^{1,1}$ bound into an estimate for differences averaged against the heat kernel. The proof uses a boundary-flux estimate for the background heat equation, the coarea formula, and two-sided Gaussian bounds.

\begin{lem}
\label{lem:heat-W11}
For the heat kernel $k_t(y,z)$ of $\Delta_g$ relative to $d\mu_g$ and every $b\in W^{1,1}(M,g)$, the following estimate holds for $0<t\leq1$
\beq
 \iint_{M\times M}k_t(y,z)d_g(y,z)^\alpha
                 |b(y)-b(z)|\,d\mu_g(y)d\mu_g(z)
 \leq Ct^{(1+\alpha)/2}\|\Na b\|_{L^1(d\mu_g)}.
 \label{eq:heat-W11}
\eeq
For $b\in L^1(M,g)$, the same integral is bounded by $Ct^{\alpha/2}\|b\|_{L^1(d\mu_g)}$.
\end{lem}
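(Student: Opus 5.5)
The plan is to reduce the estimate to indicator functions of sets through the layer-cake and coarea formulas. For a single set, the estimate becomes a heat-flux bound across its boundary, which follows from the gradient bound for the heat kernel.

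\textbf{Reduction to smooth $b$.} By density, we may assume $b$ is smooth. If $b_k\to b$ in $W^{1,1}$ with $b_k$ smooth, we pass to a subsequence converging almost everywhere. Fatou's lemma on the left-hand side, together with convergence of $\|\Na b_k\|_{L^1}$ on the right, then gives the general case.

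\textbf{Absorbing the weight $d_g^\alpha$.} Using the Gaussian upper bound in \eqref{Gaussian bounds} and $\sup_{r\ge0}r^\alpha e^{-r^2/(2Ct)}\le Ct^{\alpha/2}$, we get
$$
 k_t(y,z)\,d_g(y,z)^\alpha\le Ct^{\alpha/2}\,t^{-n}e^{-d_g(y,z)^2/(2Ct)}.
$$
By the Gaussian lower bound, applied at a comparable time $s=C't$, the right-hand side is at most $C t^{\alpha/2}k_{s}(y,z)$. For $t\le1$ we have $s\le C'$, and the Gaussian bounds remain valid on this bounded time range with adjusted constants. It therefore suffices to prove
$$
 \iint k_s(y,z)|b(y)-b(z)|\,d\mu_g\,d\mu_g\le Cs^{1/2}\|\Na b\|_{L^1}.
$$

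\textbf{Layer cake and coarea.} Let $E_\lambda=\{b>\lambda\}$. Then
$$
 |b(y)-b(z)|=\int_{\mathbb R}|\mathbf 1_{E_\lambda}(y)-\mathbf 1_{E_\lambda}(z)|\,d\lambda .
$$
By Sard's theorem, $\partial E_\lambda$ is a smooth hypersurface for almost every $\lambda$. By the coarea formula, $\int_{\mathbb R}\mathcal H^{2n-1}(\partial E_\lambda)\,d\lambda=\|\Na b\|_{L^1(d\mu_g)}$ up to the normalizing factor $V_g$. Using the symmetry of $k_s$, the problem reduces to the following estimate for a smooth domain $E$:
$$
 \int_E P_s\mathbf 1_{E^c}\,d\mu_g\le Cs^{1/2}\,\mathcal H^{2n-1}(\partial E).
$$

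\textbf{The flux estimate.} This is the step needing most care. Since $\mathbf 1_{E^c}=0$ on $E$, we can write
$$
 \int_EP_s\mathbf 1_{E^c}\,d\mu_g=\int_0^s\!\int_E\Delta_gP_r\mathbf 1_{E^c}\,d\mu_g\,dr
 =\int_0^s\!\int_{\partial E}\partial_\nu P_r\mathbf 1_{E^c}\,d\sigma\,dr .
$$
For $r>0$, $P_r\mathbf 1_{E^c}$ is smooth, so the divergence theorem applies. The limit $r\to0$ in the time integral is harmless because of the bound that follows.

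By the gradient estimate in \eqref{Gaussian bounds}, together with \eqref{kernel mass} at $\sigma=0$, we have $\|\Na P_r h\|_\infty\le Cr^{-1/2}\|h\|_\infty$. Hence the flux at time $r$ is at most $Cr^{-1/2}\mathcal H^{2n-1}(\partial E)$. Integrating in $r$ gives $Cs^{1/2}\mathcal H^{2n-1}(\partial E)$.

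\textbf{Conclusion.} Integrating over $\lambda$ and reinserting the factor $t^{\alpha/2}$ yields \eqref{eq:heat-W11}.

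\textbf{The $L^1$ statement.} Bound $|b(y)-b(z)|\le|b(y)|+|b(z)|$. By symmetry of the kernel and \eqref{kernel mass} with $\sigma=\alpha$, we have $\sup_y\int k_t(y,z)d_g(y,z)^\alpha\,d\mu_g(z)\le Ct^{\alpha/2}$. Integrating this against $|b|$ gives the bound $Ct^{\alpha/2}\|b\|_{L^1(d\mu_g)}$.
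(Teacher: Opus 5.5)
Your proposal is correct and follows the paper's proof essentially step for step: the heat-flux bound $\int_E P_s\mathbf{1}_{M\setminus E}\,d\mu_g\leq C\sqrt{s}\,\operatorname{Per}_g(E)$ from the gradient estimate and the divergence theorem, the layer-cake/coarea reduction, and the two-sided Gaussian comparison $d_g^\alpha k_t\leq Ct^{\alpha/2}k_{\kappa t}$. The only differences are minor. You handle $t$ near $1$ by extending the Gaussian and gradient bounds to the bounded time range $(0,\kappa]$, which is legitimate on a compact connected manifold, whereas the paper switches to the $L^1$ Poincar\'e inequality for $\kappa^{-1}\leq t\leq 1$. You also pass from smooth $b$ to $W^{1,1}$ by Fatou's lemma, whereas the paper applies its $L^1$ bound to $b_j-b$.
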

\begin{proof}
\textbf{Step 1.}
Using the standard heat-kernel estimates \eqref{Gaussian bounds} and \eqref{kernel mass} on the fixed smooth compact
background and differentiating the kernel
representation, we obtain
\begin{align}\label{nabla Ps}
 \|\nabla P_s h\|_\infty
 \leq Cs^{-1/2}\|h\|_\infty,
 \qquad h\in L^\infty(M),\quad 0<s\leq1.
\end{align}
In particular, this estimate applies to characteristic functions.
Their heat evolutions are smooth at every positive time.

We take an open set $E\subset M$ with smooth boundary and write
$\operatorname{Per}_g(E)$ for its perimeter with the background
volume normalization. For $s>0$, the function $H_E$ is defined by
\[
 H_E(s)
 =\int_E P_s\mathbf{1}_{M\setminus E}\,d\mu_g
 =\int_E\int_{M\setminus E}k_s(y,z)\,d\mu_g(z)d\mu_g(y).
\]
The function $H_E$ is nonnegative. It follows from the strong continuity of $P_s$ on
$L^1(M,d\mu_g)$ that
\[
 0\leq H_E(s)
 \leq\|P_s\mathbf{1}_{M\setminus E}
            -\mathbf{1}_{M\setminus E}\|_{L^1(d\mu_g)}
 \longrightarrow0
 \quad\text{as }s\downarrow0.
\]
For $s>0$, we differentiate under the integral and use the heat equation to obtain
the identity
\[
 H_E'(s)=\int_E\Delta_gP_s\mathbf{1}_{M\setminus E}\,d\mu_g.
\]
It follows from the divergence theorem and the gradient estimate \eqref{nabla Ps} that
\begin{align*}
 |H_E'(s)|
 &\leq C\operatorname{Per}_g(E)
          \|\nabla P_s\mathbf{1}_{M\setminus E}\|_\infty
 \leq Cs^{-1/2}\operatorname{Per}_g(E).
\end{align*}
The bound for $H_E'$ can be integrated from $\varepsilon$ to $s$, where $0<\varepsilon<s\leq1$.
We then let $\varepsilon\downarrow0$, using the preceding limit and
$\int_0^s\tau^{-1/2}\,d\tau=2\sqrt{s}$. It follows that
\[
 H_E(s)\leq C\sqrt{s}\,\operatorname{Per}_g(E),
 \qquad 0<s\leq1.
\]

\textbf{Step 2.}
We first take a real-valued function $b\in C^\infty(M)$ and set
$E_r=\{b>r\}$. For any $y,z\in M$,
\[
 |b(y)-b(z)|
 =\int_{\mathbb R}
   |\mathbf{1}_{E_r}(y)-\mathbf{1}_{E_r}(z)|\,dr.
\]
Indeed, the two characteristic functions differ precisely when
$r$ lies between $b(y)$ and $b(z)$, apart from endpoints.
Symmetry of the kernel gives
\begin{align*}
 &\iint_{M\times M}k_s(y,z)
   |\mathbf{1}_{E_r}(y)-\mathbf{1}_{E_r}(z)|
                       \,d\mu_g(y)d\mu_g(z)\\
 &=
 \int_{E_r}\int_{M\setminus E_r}k_s(y,z)
                       \,d\mu_g(z)d\mu_g(y)
 +\int_{M\setminus E_r}\int_{E_r}k_s(y,z)
                       \,d\mu_g(z)d\mu_g(y)=2H_{E_r}(s).
\end{align*}
Almost every $r$ is a regular value of $b$, so Step~1 applies to
$E_r$ for almost every $r$. Combining Tonelli's theorem with the coarea formula,
we obtain
\begin{equation}\label{Ksb}
\begin{split}
 &\iint_{M\times M}k_s(y,z)|b(y)-b(z)|
                       \,d\mu_g(y)d\mu_g(z)\\
&=2\int_{\mathbb R}H_{E_r}(s)\,dr
\leq C\sqrt{s}
                 \int_{\mathbb R}\operatorname{Per}_g(E_r)\,dr
\leq C\sqrt{s}\,\|\nabla b\|_{L^1(d\mu_g)},
 \qquad 0<s\leq1.
 \end{split}
\end{equation}

\textbf{Step 3.}
The two-sided Gaussian estimates in \eqref{Gaussian bounds} will be used with $\kappa=2C^2>1$. For $0<t\leq\kappa^{-1}$, we put
$d=d_g(y,z)$. The polynomial factor can be absorbed into the Gaussian by weakening its exponential decay, since
$u^\alpha e^{-u^2/C}
 \leq C_{\alpha,C}e^{-u^2/(2C)}$ for $u\geq0$. It follows that
\[
 d^\alpha k_t(y,z)
 \leq Ct^{\alpha/2}t^{-n}
              \exp\!\left(-\frac{d^2}{2Ct}\right).
\]
Since $\kappa t\leq1$, the lower Gaussian estimate at time
$\kappa t$ gives
\begin{align*}
 k_{\kappa t}(y,z)
 &\geq C^{-1}\kappa^{-n}t^{-n}
          \exp\!\left(-\frac{C d^2}{\kappa t}\right)
 =C^{-1}\kappa^{-n}t^{-n}
          \exp\!\left(-\frac{d^2}{2Ct}\right).
\end{align*}
These inequalities give the pointwise comparison
\begin{align}\label{the comparison}
 d_g(y,z)^\alpha k_t(y,z)
 \leq Ct^{\alpha/2}k_{\kappa t}(y,z),
 \qquad 0<t\leq\kappa^{-1}.
\end{align}
For $0<t\leq\kappa^{-1}$, it follows from \eqref{the comparison} and \eqref{Ksb} at $s=\kappa t$ that
\begin{align*}
 I_\alpha=&\iint_{M\times M}k_t(y,z)d_g(y,z)^\alpha|b(y)-b(z)|
                         \,d\mu_g(y)d\mu_g(z)\\
 &\qquad\leq Ct^{\alpha/2}
       \iint_{M\times M}k_{\kappa t}(y,z)|b(y)-b(z)|
                         \,d\mu_g(y)d\mu_g(z)\\
 &\qquad\leq Ct^{\alpha/2}\sqrt{\kappa t}
                         \,\|\nabla b\|_{L^1(d\mu_g)}\leq Ct^{(1+\alpha)/2}
                         \,\|\nabla b\|_{L^1(d\mu_g)}.
\end{align*}
For $\kappa^{-1}\leq t\leq1$, we set
$\bar b_g=\int_Mb\,d\mu_g$. The mass identities imply
\[
 \iint_{M\times M}k_t(y,z)|b(y)-b(z)|\,d\mu_g(y)d\mu_g(z)
 \leq2\|b-\bar b_g\|_{L^1(d\mu_g)}.
\]
Indeed, the inequality
$|b(y)-b(z)|\leq|b(y)-\bar b_g|+|b(z)-\bar b_g|$
can be integrated in each variable. Combining the bounded diameter
with the background $L^1$ Poincar\'e inequality, we obtain
\begin{align*}
I_\alpha
 &\leq 2\operatorname{diam}_g(M)^\alpha
                         \|b-\bar b_g\|_{L^1(d\mu_g)}
\leq C\|\nabla b\|_{L^1(d\mu_g)}
\leq C\kappa^{(1+\alpha)/2}t^{(1+\alpha)/2}
                         \|\nabla b\|_{L^1(d\mu_g)}.
\end{align*}
This proves
\eqref{eq:heat-W11} for smooth $b$ throughout $0<t\leq1$.

\textbf{Step 4.}
The following bound holds
\begin{align}\label{k d alpha}
 \int_Mk_t(y,z)d_g(y,z)^\alpha\,d\mu_g(z)
 \leq Ct^{\alpha/2}.
\end{align}
For $0<t\leq\kappa^{-1}$, we integrate \eqref{the comparison} and use the mass identity \eqref{mass identities} at time $\kappa t$ to obtain \eqref{k d alpha}. For $\kappa^{-1}\leq t\leq1$, the same bound follows from
\[
 \int_Mk_t(y,z)d_g(y,z)^\alpha\,d\mu_g(z)
 \leq\operatorname{diam}_g(M)^\alpha
 \leq\operatorname{diam}_g(M)^\alpha\kappa^{\alpha/2}t^{\alpha/2}.
\]
By symmetry, the corresponding bound holds with $y$ integrated and
$z$ fixed. Thus, for every $b\in L^1(M,d\mu_g)$, we use
the triangle inequality and Tonelli's theorem to obtain
\begin{align*}
I_\alpha\leq
 2\int_M|b(y)|
       \left(\int_M k_t(y,z)d_g(y,z)^\alpha\,d\mu_g(z)\right)
                                                d\mu_g(y)
\leq Ct^{\alpha/2}\|b\|_{L^1(d\mu_g)}.
\end{align*}
For $b\in W^{1,1}(M,g)$, there are smooth functions $b_j$ such
that $b_j\to b$ in $W^{1,1}(M,g)$. For each fixed $t>0$, it follows from the inequality
$
 \bigl|\,|b_j(y)-b_j(z)|-|b(y)-b(z)|\,\bigr|
 \leq |(b_j-b)(y)-(b_j-b)(z)|
$
and the $L^1$ estimate just proved that
\begin{align*}
 &\iint_{M\times M}k_t(y,z)d_g(y,z)^\alpha
 \bigl|\,|b_j(y)-b_j(z)|-|b(y)-b(z)|\,\bigr|
                         \,d\mu_g(y)d\mu_g(z)
                         \leq Ct^{\alpha/2}\|b_j-b\|_{L^1(d\mu_g)}
 \rightarrow0.
\end{align*}
Also,
\[
 \bigl|\|\nabla b_j\|_{L^1(d\mu_g)}
        -\|\nabla b\|_{L^1(d\mu_g)}\bigr|
 \leq\|\nabla b_j-\nabla b\|_{L^1(d\mu_g)}
 \longrightarrow0.
\]
Passing to the limit in the smooth estimate, we obtain
\eqref{eq:heat-W11} for every $b\in W^{1,1}(M,g)$ and every
$0<t\leq1$.
\end{proof}

The preceding two lemmas improve the estimate for the heat approximation error in Green's formula. By symmetry of the heat kernel, the integral can be written in terms of differences of both functions. The estimate then follows by truncating the Green function and choosing the truncation level in terms of time.

\begin{lem}
\label{lem:Green-cancellation}
Under the assumptions of Lemma~\ref{lem:truncated-Green}, for $a\in C^\alpha(M)$ and $0<t\leq1/4$, we have
\beq
 \left|\int_MG_x^\varphi(a-P_ta)\,d\mu_g\right|
 \leq C\|a\|_{C^\alpha}
                  t^{(1+\alpha)/2}(1+|\log t|).
 \label{eq:Green-cancellation}
\eeq
\end{lem}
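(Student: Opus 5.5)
Replace $G_x^\varphi$ by $Z=C_G+1-G_x^\varphi$. Since $\int_M(a-P_ta)\,d\mu_g=0$ by \eqref{pt a}, the constant drops out and
$$
 \int_MG_x^\varphi(a-P_ta)\,d\mu_g=-\int_MZ(a-P_ta)\,d\mu_g .
$$
Fix a truncation level $N\geq2$, to be chosen in terms of $t$, and split $Z=Z_N+(Z-Z_N)$.

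The tail is handled crudely. By \eqref{eq:heat-alpha} with $r=\sqrt t$, we have $\|a-P_ta\|_\infty\leq Ct^{\alpha/2}\|a\|_{C^\alpha}$. The second estimate in \eqref{eq:Green-truncations} then gives
$$
 \Bigl|\int_M(Z-Z_N)(a-P_ta)\,d\mu_g\Bigr|\leq C_\nu t^{\alpha/2}N^{1-\nu}\|a\|_{C^\alpha}.
$$

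For the main term, use the kernel representation and its symmetry. By the mass identities \eqref{mass identities},
$$
 \int_MZ_N(a-P_ta)\,d\mu_g=\iint k_t(y,z)Z_N(y)\bigl(a(y)-a(z)\bigr)\,d\mu_g(z)d\mu_g(y).
$$
Symmetrizing in $(y,z)$ turns this into
$$
 \tfrac12\iint k_t(y,z)\bigl(Z_N(y)-Z_N(z)\bigr)\bigl(a(y)-a(z)\bigr)\,d\mu_g\,d\mu_g .
$$
This is the cancellation: both factors are now differences. Bound $|a(y)-a(z)|\leq\|a\|_{C^\alpha}d_g(y,z)^\alpha$. Apply Lemma~\ref{lem:heat-W11} to $b=Z_N$, which is Lipschitz and hence lies in $W^{1,1}$. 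Together with the first estimate in \eqref{eq:Green-truncations}, the main term is at most
$$
 C\|a\|_{C^\alpha}t^{(1+\alpha)/2}\|\Na Z_N\|_{L^1(d\mu_g)}\leq C\|a\|_{C^\alpha}t^{(1+\alpha)/2}(1+\log N).
$$

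Finally, choose $N$ so that the tail is no larger than the main term. We need $t^{\alpha/2}N^{1-\nu}\leq t^{(1+\alpha)/2}$, that is, $N=t^{-1/(2(\nu-1))}$ for a fixed $\nu\in(1,n/(n-1))$, say the midpoint. Then $N\geq2$ for $t\leq1/4$ after possibly enlarging the constant; the finitely many small-$N$ cases are trivial. This choice also gives $\log N\leq C|\log t|$. Adding the two pieces yields \eqref{eq:Green-cancellation} with constants depending only on $M,g,n,c,C_0,B$ and the fixed $\nu$.

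The main obstacle was the $W^{1,1}$ control of the truncated Green function without a trace bound, and Lemma~\ref{lem:truncated-Green} already supplies it. What remains needs care only on two points. First, the symmetrization of the double integral is legitimate because $Z_N$ is bounded and the kernel is integrable. Second, the logarithmic loss from $\log N$ is exactly what produces the factor $1+|\log t|$ in the statement.
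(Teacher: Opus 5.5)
Your proposal is correct and follows the paper's argument. You symmetrize against the heat kernel, truncate $Z$ at level $N$, control the truncated part by Lemma~\ref{lem:heat-W11} together with the $W^{1,1}$ bound of Lemma~\ref{lem:truncated-Green}, control the tail by the $L^1$ bound $C_\nu N^{1-\nu}$, and take $N=t^{-1/(2(\nu-1))}$; for the midpoint $\nu$ this is exactly the paper's choice $N=t^{-(n-1)}$. The only cosmetic differences are that you split before symmetrizing and bound the tail through $\|a-P_ta\|_\infty\leq Ct^{\alpha/2}\|a\|_{C^\alpha}$ rather than through \eqref{k d alpha}, and that your hedge about small $N$ is unnecessary, since $N\geq4^{n-1}\geq2$ automatically for $t\leq1/4$.
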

\begin{proof}
It follows from the normalization \eqref{eq:Green-normalization} and upper bound \eqref{eq:Green-estimate} for the Green function that
\[
\begin{aligned}
 \int_M|G_x^\varphi|\,d\mu_g
 &\leq c^{-1}\int_M|G_x^\varphi|\,d\mu_\varphi
 =2c^{-1}\int_M\max\{G_x^\varphi,0\}\,d\mu_\varphi
 \leq 2c^{-1}C_G.
\end{aligned}
\]
Since
$a$ is bounded, the properties of the background heat kernel imply that the integral in \eqref{eq:Green-cancellation} is
well-defined.
The exchanges of variables below are justified by the bound
\[
\begin{aligned}
 &\iint_{M\times M} k_t(y,z)
 \bigl(|G_x^\varphi(y)|+|G_x^\varphi(z)|\bigr)
 |a(y)-a(z)|\,d\mu_g(y)d\mu_g(z)\leq
 4\|a\|_\infty\|G_x^\varphi\|_{L^1(d\mu_g)}<\infty.
\end{aligned}
\]
Hence all the double integrals used in the symmetrization are
absolutely integrable.
Using
$$
 a(y)-(P_ta)(y)
 =\int_Mk_t(y,z)\bigl(a(y)-a(z)\bigr)\,d\mu_g(z)
$$
and Fubini's theorem, we get
\[
\begin{aligned}
I= \int_MG_x^\varphi(a-P_ta)\,d\mu_g
 &=\iint_{M\times M}k_t(y,z)G_x^\varphi(y)
       \bigl(a(y)-a(z)\bigr)\,d\mu_g(y)d\mu_g(z).
\end{aligned}
\]
Symmetry gives
$$
\begin{aligned}
I
=-\iint_{M\times M}k_t(y,z)G_x^\varphi(z)
       \bigl(a(y)-a(z)\bigr)\,d\mu_g(y)d\mu_g(z).
\end{aligned}
$$
Their average gives the
identity
\[
\begin{aligned}
 &I
=\frac12\iint_{M\times M}k_t(y,z)
       \bigl(G_x^\varphi(y)-G_x^\varphi(z)\bigr)
       \bigl(a(y)-a(z)\bigr)\,d\mu_g(y)d\mu_g(z).
\end{aligned}
\]
The H\"older continuity of $a$ therefore gives
\[
\begin{aligned}
 &|I|
\leq\frac12[a]_{C^\alpha}
 \iint_{M\times M}k_t(y,z)d_g(y,z)^\alpha
 |G_x^\varphi(y)-G_x^\varphi(z)|\,d\mu_g(y)d\mu_g(z).
\end{aligned}
\]

Writing $G_x^\varphi=C_G+1-Z_N-(Z-Z_N)$,
we obtain the bound
\[
\begin{aligned}
 |G_x^\varphi(y)-G_x^\varphi(z)|
 &\leq |Z_N(y)-Z_N(z)|+|(Z-Z_N)(y)-(Z-Z_N)(z)|.
\end{aligned}
\]
This bound then gives
\[
\begin{aligned}
|I|
 &\leq\frac12[a]_{C^\alpha}
\Big\{ \iint_{M\times M}k_t(y,z)d_g(y,z)^\alpha
 |Z_N(y)-Z_N(z)|\,d\mu_g(y)d\mu_g(z)\\
 &+
 \iint_{M\times M}k_t(y,z)d_g(y,z)^\alpha\times
 |(Z-Z_N)(y)-(Z-Z_N)(z)|\,d\mu_g(y)d\mu_g(z)\Big\}=\frac12[a]_{C^\alpha}(I_1+I_2).
\end{aligned}
\]
Since $Z_N\in W^{1,1}(M,g)$, the first bound follows from Lemmas~\ref{lem:truncated-Green} and \ref{lem:heat-W11}
\[
I_1 \leq  Ct^{(1+\alpha)/2}(1+\log N).
\]
Since $Z-Z_N$ is nonnegative, the second bound follows from Tonelli's theorem and \eqref{k d alpha}
\begin{align*}
I_2&\leq
2\int_M(Z-Z_N)(y)
 \left(\int_Mk_t(y,z)d_g(y,z)^\alpha\,d\mu_g(z)\right)d\mu_g(y)\\
 &\leq  Ct^{\alpha/2}\|Z-Z_N\|_{L^1(d\mu_g)}\leq C_\nu t^{\alpha/2}N^{1-\nu}.
\end{align*}
Combining these bounds, we obtain, for
every $N\geq2$,
\[
\begin{aligned}
 &|I|
 \leq C_\nu\|a\|_{C^\alpha}
 \left\{t^{(1+\alpha)/2}(1+\log N)
             +t^{\alpha/2}N^{1-\nu}\right\}.
\end{aligned}
\]
The parameters are chosen as
$
 \nu=1+\frac1{2(n-1)}$, $N=t^{-1/(2(\nu-1))}=t^{-(n-1)}.
$
Direct calculation gives
\[
\begin{aligned}
 t^{\alpha/2}N^{1-\nu}
   =t^{(1+\alpha)/2},\quad  \log N=(n-1)|\log t|.
\end{aligned}
\]
Substituting these identities into the preceding estimate, we obtain
\eqref{eq:Green-cancellation}.
\end{proof}

The preceding results provide the tools for the proof of Proposition~\ref{prop:density-approximation}.

\begin{proof}[Proof of Proposition~\ref{prop:density-approximation}]
\textbf{Step 1.}
For $0<\varepsilon\leq1/2$, we recall that
\[
 f_\varepsilon=P_{\varepsilon^2}(e^F),\qquad
 F_\varepsilon=\log f_\varepsilon.
\]
The assumptions on $F$ give
$
 e^{-K}\leq e^F\leq e^K,
 [e^F]_{C^\alpha}\leq e^K[F]_{C^\alpha},
 \|e^F\|_{C^\alpha}\leq e^K(1+K).
$
Since $P_t$ preserves positivity, constants,
and the $d\mu_g$-integral, we have
\[
 e^{-K}\leq f_\varepsilon\leq e^K,\qquad
 \int_M f_\varepsilon\,d\mu_g
 =\int_M e^F\,d\mu_g=1.
\]
Thus each $f_\varepsilon$ is a smooth positive density satisfying the
compatibility condition for the elliptic equation. By Yau's theorem
\cite{Yau}, there is a unique smooth solution with the normalization
\[
 (\omega_g+\sqrt{-1}\partial\bar\partial\varphi_\varepsilon)^n
 =f_\varepsilon\omega_g^n,\qquad
 \omega_{\varphi_\varepsilon}>0,\qquad
 \int_M\varphi_\varepsilon\,d\mu_g=0.
\]

By the heat estimates \eqref{eq:heat-alpha} applied to $e^F$, we have
\begin{align}\label{heat estimates app}
 \|f_\varepsilon\|_{C^\alpha}\leq C,\qquad
 \|\nabla f_\varepsilon\|_\infty\leq C\varepsilon^{\alpha-1},\qquad
 \|f_\varepsilon-e^F\|_\infty\leq C\varepsilon^\alpha.
\end{align}
Consequently,
\[
 \|F_\varepsilon\|_\infty\leq K,\qquad
 [F_\varepsilon]_{C^\alpha}\leq e^K[f_\varepsilon]_{C^\alpha},
 \qquad \|F_\varepsilon\|_{C^\alpha}\leq C.
\]
Moreover, the chain rule gives
\[
 \nabla F_\varepsilon
 =f_\varepsilon^{-1}\nabla f_\varepsilon,\qquad
 1+\|\nabla F_\varepsilon\|_\infty
 \leq C\varepsilon^{\alpha-1}.
\]

Applying Lemma~\ref{lem:C0} and the gradient estimate
\eqref{eq:gradient-estimate} with logarithmic density
$F_\varepsilon$, we obtain
\begin{align}\label{Linfty gradient estimate app}
 \|\varphi_\varepsilon\|_\infty
 +\|\nabla\varphi_\varepsilon\|_\infty\leq C.
\end{align}
Lemma~\ref{lem:elliptic-trace} therefore gives the following estimate
for every fixed $\delta>0$
\begin{align}\label{trace estimate app}
 \|n+\Delta_g\varphi_\varepsilon\|_\infty
 &\leq C_\delta
       \bigl(1+\|\nabla F_\varepsilon\|_\infty\bigr)^{1+\delta}
 \leq C_\delta\varepsilon^{(\alpha-1)(1+\delta)}
 =C_\delta\varepsilon^{-(1-\alpha)(1+\delta)}.
\end{align}
This proves the trace assertion of \eqref{eq:density-approximation}.

\textbf{Step 2.}
We fix $0<\varepsilon\leq1/4$ and set
$
 f_{\varepsilon,s}=(1-s)f_\varepsilon+s f_{2\varepsilon}$, $0\leq s\leq1$.
Convexity gives
\begin{align}\label{approximate density bound}
 e^{-K}\leq f_{\varepsilon,s}\leq e^K,\qquad
 \int_M f_{\varepsilon,s}\,d\mu_g=1,\qquad
 \|f_{\varepsilon,s}\|_{C^\alpha}\leq C,\qquad
 \|\log f_{\varepsilon,s}\|_{C^\alpha}\leq C.
\end{align}
For each $s$, we denote by $\varphi_{\varepsilon,s}$ the smooth normalized
solution of the approximate equation
\[
 (\omega_g+\sqrt{-1}\partial\bar\partial
                  \varphi_{\varepsilon,s})^n
 =f_{\varepsilon,s}\omega_g^n,\qquad
 \omega_{\varphi_{\varepsilon,s}}>0,\qquad
 \int_M\varphi_{\varepsilon,s}\,d\mu_g=0.
\]
Uniqueness gives
$
 \varphi_{\varepsilon,0}=\varphi_\varepsilon$,
 $\varphi_{\varepsilon,1}=\varphi_{2\varepsilon}.
$
It follows again from Lemma~\ref{lem:C0} and
\eqref{eq:gradient-estimate} that there is a constant $C$ independent of $\varepsilon$ such that
\begin{align}\label{approximate gradient bound}
 \sup_{0\leq s\leq1}
 \bigl(\|\varphi_{\varepsilon,s}\|_\infty
      +\|\nabla\varphi_{\varepsilon,s}\|_\infty\bigr)\leq C.
\end{align}

To justify differentiation in $s$, we fix $\varepsilon>0$ and a
point $s_0\in[0,1]$. The
differential of the complex Monge--Amp\`ere operator at $\varphi_{\varepsilon,s_0}$ is
\[
 \xi\longmapsto
 f_{\varepsilon,s_0}
 \Delta_{\varphi_{\varepsilon,s_0}}\xi.
\]
For any $h\in C^{k,\eta}$ with $\int_Mh\,d\mu_g=0$, the equation
for the inverse is
\[
 \Delta_{\varphi_{\varepsilon,s_0}}\xi
 =\frac{h}{f_{\varepsilon,s_0}},\qquad
 \int_M\xi\,d\mu_g=0.
\]
Its solvability condition is exactly
$
 \int_M\frac{h}{f_{\varepsilon,s_0}}
                 \,d\mu_{\varphi_{\varepsilon,s_0}}
 =\int_M h\,d\mu_g=0,
$
since $d\mu_{\varphi_{\varepsilon,s_0}}
=f_{\varepsilon,s_0}\,d\mu_g$. The kernel of the Laplacian consists
of constants, and the fixed background mean-zero condition removes
this kernel. Elliptic solvability and Schauder regularity then
give an isomorphism between the two mean-zero tangent spaces.
The implicit function theorem gives a smooth local family in $s$.
Uniqueness makes the local families agree on overlaps, giving the
smooth dependence on the whole interval. The same argument applies
at the endpoints by locally extending the positive affine density
path.

Differentiating the approximate equation, we obtain
\[
\begin{aligned}
 \Delta_{\varphi_{\varepsilon,s}}
                   (\partial_s\varphi_{\varepsilon,s})
 =\partial_s\log
   \frac{\omega_{\varphi_{\varepsilon,s}}^n}{\omega_g^n}
 =\partial_s\log f_{\varepsilon,s}
 =\frac{f_{2\varepsilon}-f_\varepsilon}{f_{\varepsilon,s}},\quad  \int_M\partial_s\varphi_{\varepsilon,s}\,d\mu_g=0.
\end{aligned}
\]
For each fixed $s$, Green's formula with the normalization
\eqref{eq:Green-normalization} gives
\begin{equation}\label{Green formula derivative}
\begin{split}
 &\partial_s\varphi_{\varepsilon,s}(x)
   -\int_M\partial_s\varphi_{\varepsilon,s}
                         \,d\mu_{\varphi_{\varepsilon,s}}
=
 \int_MG_x^{\varphi_{\varepsilon,s}}
 \Delta_{\varphi_{\varepsilon,s}}
                 (\partial_s\varphi_{\varepsilon,s})
                         \,d\mu_{\varphi_{\varepsilon,s}}\\
&=
 \int_MG_x^{\varphi_{\varepsilon,s}}
 \frac{f_{2\varepsilon}-f_\varepsilon}{f_{\varepsilon,s}}
                         \,d\mu_{\varphi_{\varepsilon,s}}
=
 \int_MG_x^{\varphi_{\varepsilon,s}}
                  (f_{2\varepsilon}-f_\varepsilon)\,d\mu_g= R_{\varepsilon,s}(x).
\end{split}
\end{equation}

\textbf{Step 3.}
By the semigroup property of the fixed background heat operator, we have
\[
\begin{aligned}
 f_{2\varepsilon}
 =P_{(2\varepsilon)^2}(e^F)
 =P_{4\varepsilon^2}(e^F)
 =P_{3\varepsilon^2}P_{\varepsilon^2}(e^F)
 =P_{3\varepsilon^2}f_\varepsilon.
\end{aligned}
\]
It follows that $f_{2\varepsilon}-f_\varepsilon=-(f_\varepsilon-P_{3\varepsilon^2}f_\varepsilon)$. Lemma~\ref{lem:Green-cancellation} then applies with the smooth potential
$\varphi_{\varepsilon,s}$, the function $a=f_\varepsilon$, and
$t=3\varepsilon^2\leq\frac{3}{16}<\frac14.
$
Combining the density bounds \eqref{approximate density bound} and the gradient estimate \eqref{approximate gradient bound} with
$\|f_\varepsilon\|_{C^\alpha}\leq C$, we obtain
\[
\begin{aligned}
 \|R_{\varepsilon,s}\|_\infty
 &\leq C\|f_\varepsilon\|_{C^\alpha}
 (3\varepsilon^2)^{(1+\alpha)/2}
       (1+|\log(3\varepsilon^2)|)
 \leq C\varepsilon^{1+\alpha}(1+|\log\varepsilon|),
 \qquad 0\leq s\leq1.
\end{aligned}
\]
The constants are uniform in $s$ and $\varepsilon$.

We integrate \eqref{Green formula derivative} with respect to $d\mu_g(x)$ to obtain
$
 -\int_M\partial_s\varphi_{\varepsilon,s}
                      \,d\mu_{\varphi_{\varepsilon,s}}
 =\int_MR_{\varepsilon,s}\,d\mu_g.
$
This identity then gives
\[
 \partial_s\varphi_{\varepsilon,s}(x)
 =R_{\varepsilon,s}(x)
       -\int_MR_{\varepsilon,s}\,d\mu_g.
\]
Therefore,
\[
 \|\partial_s\varphi_{\varepsilon,s}\|_\infty
 \leq2\|R_{\varepsilon,s}\|_\infty
 \leq C\varepsilon^{1+\alpha}(1+|\log\varepsilon|).
\]
Integrating from $s=0$ to $s=1$ and using the endpoint identities,
we conclude that
\[
 \|\varphi_{2\varepsilon}-\varphi_\varepsilon\|_\infty
 \leq\int_0^1
       \|\partial_s\varphi_{\varepsilon,s}\|_\infty\,ds
 \leq C\varepsilon^{1+\alpha}(1+|\log\varepsilon|).
\]
For any $0<\varepsilon\leq1/4$, the sequence
$\varepsilon_j=\varepsilon2^{-j}$ is defined for $j=0,1,2,\ldots$.
For $j\geq1$, it follows from the preceding comparison at scale
$\varepsilon_j$, where $2\varepsilon_j=\varepsilon_{j-1}$, that
\[
 \|\varphi_{\varepsilon_j}
          -\varphi_{\varepsilon_{j-1}}\|_\infty
 \leq C\varepsilon^{1+\alpha}2^{-j(1+\alpha)}
      (1+|\log\varepsilon|+j\log2)
  \leq C(1+|\log\varepsilon|)\varepsilon^{1+\alpha}(1+j)2^{-j(1+\alpha)}
\]
Since $
       \sum_{j=1}^{\infty}(1+j)2^{-j(1+\alpha)}
 \leq C,
$
the sequence $\varphi_{\varepsilon_j}$ is Cauchy in $C^0(M)$.
Its uniform limit
$\widetilde\varphi$ is bounded, continuous and $\omega_g$-plurisubharmonic, with
$\int_M\widetilde\varphi\,d\mu_g=0$.

The heat approximation \eqref{heat estimates app} implies
\[
 \|f_{\varepsilon_j}-e^F\|_\infty
 \leq C\varepsilon_j^\alpha\longrightarrow0.
\]
After adding the local background potentials, we apply the continuity of the
Monge--Amp\`ere operator under locally uniform convergence of bounded
plurisubharmonic functions \cite{BT} to obtain
\[
 (\omega_g+\sqrt{-1}\partial\bar\partial
                      \widetilde\varphi)^n
 =e^F\omega_g^n.
\]
By uniqueness for bounded pluripotential solutions on the compact
manifold \cite{BT,Kol}, the equality of the $d\mu_g$-means implies
$\widetilde\varphi=\varphi$.

Since the scale $\varepsilon$ was arbitrary,
the following estimate holds for every $0<\varepsilon\leq1/4$
\begin{align}\label{L infty app}
 \|\varphi_\varepsilon-\varphi\|_\infty
 \leq C\varepsilon^{1+\alpha}(1+|\log\varepsilon|)
  \leq C_\rho\varepsilon^{1+\alpha-\rho}.
\end{align}
Here we fix $\rho>0$ and use
$
 \epsilon^\rho(1+|\log \epsilon|)
 =e^{-\rho |\log \epsilon|}(1+|\log \epsilon|)
 \leq1+\frac{1}{e\rho}.
$
This also proves that the full family converges uniformly to
$\varphi$ as $\varepsilon\downarrow0$, and establishes continuity
of the originally prescribed bounded solution.

\textbf{Step 4.}
We fix $\delta>0$ and $0<\gamma<1$ and choose a finite exponent
$p>2n/(1-\gamma)$. Since $\int_Md\mu_g=1$, the trace estimate \eqref{trace estimate app}  implies
\[
\begin{aligned}
 \|\Delta_g\varphi_\varepsilon\|_{L^p(d\mu_g)}
 &\leq\|\Delta_g\varphi_\varepsilon\|_\infty
 \leq n+\|n+\Delta_g\varphi_\varepsilon\|_\infty
 \leq C_\delta\varepsilon^{-(1-\alpha)(1+\delta)}.
\end{aligned}
\]
It follows from \eqref{eq:background-elliptic} and \eqref{Linfty gradient estimate app} that
\[
\begin{aligned}
 \|\varphi_\varepsilon\|_{C^{1,\gamma}}
 \leq C_{p,\gamma}\|\varphi_\varepsilon\|_{W^{2,p}}
 \leq C_{p,\gamma}
 \bigl(\|\Delta_g\varphi_\varepsilon\|_{L^p(d\mu_g)}
                    +\|\varphi_\varepsilon\|_\infty\bigr)
 \leq C_{\gamma,\delta}
             \varepsilon^{-(1-\alpha)(1+\delta)}.
\end{aligned}
\]
We fix $0<\beta<\alpha$ and apply Lemma~\ref{lem:interpolation} with
$
 a=1+\alpha-\rho, b=(1-\alpha)(1+\delta)
$ to get
\[
 \|\varphi\|_{C^{1,\beta}}\leq C_\beta.
\]
The exponent condition \eqref{eq:interpolation-range} holds
with the following choice of parameters
\begin{align}\label{parameters}
 \rho=\frac{\alpha-\beta}{2(1-\beta)},\qquad
 \delta=\frac{\alpha-\beta}{2(1-\alpha)(1+\beta)},\qquad \gamma=\frac12\left(1+\beta+\frac{b(1+\beta)}{a}\right).
\end{align}
A direct calculation gives
$
 a(\gamma-\beta)-b(1+\beta)
 =\frac12\bigl(a(1-\beta)-b(1+\beta)\bigr)
 =\frac{\alpha-\beta}{2}>0.
$
Thus, we have proved all
assertions of the proposition.
\end{proof}

\subsection{An auxiliary parabolic equation}
\label{sec:flow}

Throughout this subsection, $F$ and $\varphi$ are smooth. We use the notation $ \widehat F(t)=P_tF$ and $ \Dpar=\partial_t-\Delta_\psi$.
The heat equation gives
\[
(\partial_t-\Delta_g)\widehat F=0,\quad \widehat F(0)=F.
\]
For a smooth initial K\"ahler potential $z$, we denote by $\mathcal T_t^Fz=\psi(t)$ the solution to
\beq
 \partial_t\psi
 =\log\frac{\omega_{\psi(t)}^n}{\omega_g^n}-\widehat F(t),
 \qquad \psi(0)=z.
 \label{eq:flow}
\eeq
The forcing $P_tF$ in \eqref{eq:flow} is the same for every initial potential $z$.

The following lemma gives the basic bounds and the comparison estimate for the auxiliary flow. We apply the maximum principle to the time derivative and to the difference of two solutions. The existence assertion is completed using the trace estimate in Lemma~\ref{lem:trace-propagation}. The K\"ahler-Ricci flow with weak initial potential has been constructed in \cite{MR2348984,MR2775789}.

\begin{lem}
\label{lem:flow-basic}
We assume that the initial potential in \eqref{eq:flow} satisfies
$$
 \|z\|_\infty+
 \left\|\log\frac{\omega_z^n}{\omega_g^n}\right\|_\infty\leq B.
$$
Then there is a smooth solution to \eqref{eq:flow} on $[0,1]$ satisfying
\beq
 \|\psi(t)\|_\infty+\|U(t)\|_\infty+\|H(t)\|_\infty\leq C_B,\qquad  U=\partial_t\psi,\qquad
 H=\log\frac{\omega_{\psi(t)}^n}{\omega_g^n}.
 \label{eq:flow-basic}
\eeq
For any two smooth initial potentials, the solutions satisfy
\beq
 \|\mathcal T_t^Fz_1-\mathcal T_t^Fz_2\|_\infty
 \leq\|z_1-z_2\|_\infty.
 \label{eq:flow-contraction}
\eeq
For $z=\varphi$, the stronger estimates are
\beq
 \|U(t)\|_\infty\leq Ct^{\alpha/2},\qquad
 \|\psi(t)-\varphi\|_\infty\leq Ct^{1+\alpha/2}.
 \label{eq:flow-approximation}
\eeq
\end{lem}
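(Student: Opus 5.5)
The plan is to apply the parabolic maximum principle to three quantities: the time derivative $U$, the difference of two solutions, and, for $z=\varphi$, a corrected version of $U$. Existence will come from short-time existence together with the trace estimate of Lemma~\ref{lem:trace-propagation}.

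\textbf{Existence and the bound on $U$.} Standard parabolic theory gives a smooth solution on a maximal interval $[0,T)$. Differentiating \eqref{eq:flow} in $t$ gives
\[
\partial_tU=\Delta_\psi U-\partial_t\widehat F,
\]
that is, $\Dpar U=-\Delta_g P_tF$. For smooth $F$ this forcing is bounded. Uniformly in the higher derivatives of $F$, \eqref{eq:heat-time} gives $\|\partial_t\widehat F\|_\infty\le Ct^{\alpha/2-1}\|F\|_{C^\alpha}$, which is integrable on $[0,1]$. At $t=0$ we have $U(0)=\log(\omega_z^n/\omega_g^n)-F$, which is bounded by $B+K$. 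The maximum principle then yields
\[
\|U(t)\|_\infty\le B+K+\int_0^t\|\partial_s\widehat F\|_\infty\,ds\le C_B .
\]
Consequently $H=U+\widehat F$ is bounded, and $\|\psi(t)-z\|_\infty\le C_Bt$, so $\|\psi(t)\|_\infty\le C_B$. Bounded $H$ and bounded $\psi$ give two-sided bounds on the density $f_{\psi(t)}$. Lemma~\ref{lem:trace-propagation} then gives a trace bound, possibly degenerating as $t\to 0$ but finite for $t>0$. Evans--Krylov/Calabi-type higher estimates, with smooth $F$ and possibly non-uniform constants, show that the solution can be continued past any finite $T\le1$. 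This is the step I expect to be the main obstacle, since the statement here must defer to a later lemma. I would verify that Lemma~\ref{lem:trace-propagation} only needs the $C^0$ bounds on $\psi$, $U$, $H$ already obtained, so that no circularity arises.

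\textbf{Contraction.} Let $\psi_i=\mathcal T_t^Fz_i$ and $w=\psi_1-\psi_2$. The forcing terms cancel, and
\[
\partial_tw=\log\frac{\omega_{\psi_1}^n}{\omega_{\psi_2}^n}=\int_0^1\Delta_{\psi_2+s w}w\,ds .
\]
This is a linear parabolic equation in $w$ with no zeroth-order term. At an interior spatial maximum of $w$, concavity of $\log\det$ gives $\partial_tw\le0$, and symmetrically at a minimum $\partial_t w\ge 0$. Hence $\sup|w|$ is nonincreasing, which proves \eqref{eq:flow-contraction}.

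\textbf{Approximation for $z=\varphi$.} Here $U(0)=F-F=0$. The maximum principle applied to $U\mp\int_0^t\|\partial_s\widehat F\|_\infty\,ds$ gives
\[
\|U(t)\|_\infty\le\int_0^tCs^{\alpha/2-1}\,ds=Ct^{\alpha/2}.
\]
Integrating in time then gives
\[
\|\psi(t)-\varphi\|_\infty\le\int_0^t Cs^{\alpha/2}\,ds=Ct^{1+\alpha/2}.
\]
The constants depend only on $K$ and $\alpha$ through \eqref{eq:heat-time}, and not on higher derivatives of $F$.
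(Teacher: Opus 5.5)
Your proposal is correct and follows essentially the same route as the paper. The paper also uses the maximum principle for $U$ with the integrable bound $\|\partial_t\widehat F\|_\infty\leq Ct^{\alpha/2-1}$, compares two solutions at a maximum (making your non-strict inequality rigorous with the perturbation $\psi_1-\psi_2-m-\eta t$), uses $U(0)=0$ for $z=\varphi$, and continues the flow via the trace bound of Lemma~\ref{lem:trace-propagation} plus parabolic higher-order estimates (Chu's interior estimate and a Schauder bootstrap). One small correction: that trace bound does not degenerate as $t\to0$; it is $C_B(1+\|n+\Delta_g z\|_\infty)$ uniformly on the existence interval, which is exactly what the continuation argument needs.
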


\begin{proof}
For fixed smooth $F$ and $z$ with $\omega_z>0$, a smooth solution exists on a short interval by standard strictly parabolic theory, since the linearization in the potential is $\Delta_z$. The theorem also applies when the forcing $P_tF$ depends smoothly on time. We denote the maximal existence time by $T_*>0$. All estimates in this proof and in Lemma~\ref{lem:trace-propagation} are first established for $0\leq t<\min\{T_*,1\}$.

The definitions give $\partial_t(g_\psi)_{i\bar j}=U_{i\bar j}$. Differentiating \eqref{eq:flow}, we obtain
\beq
 \Dpar U=-\partial_t\widehat F,\qquad
 \partial_tH=\Delta_\psi U,\qquad
 \partial_t d\mu_{\psi(t)}=(\Delta_\psi U)d\mu_{\psi(t)}.
 \label{eq:flow-identities}
\eeq
The heat estimate gives
\begin{align}\label{heat estimate V}
\|\partial_t\widehat F(t)\|_\infty\leq Ct^{\alpha/2-1}.
\end{align}
For $0<\eta<t\leq T$, we define
$$
V(x,s)= U(x,s)-\|U(\eta)\|_\infty
       -C\int_\eta^s r^{\alpha/2-1}\,dr,\quad
 \eta\leq s\leq t.
$$
By \eqref{heat estimate V}, we have
$$(\partial_s-\Delta_{\psi(s)})V\leq 0,\quad V(\eta)\leq 0.$$ The maximum principle implies $V\leq 0$. Applying the same argument to $-U$, we obtain
\begin{align}\label{U t bound}
 \|U(t)\|_\infty
 \leq\|U(\eta)\|_\infty
       +C\int_\eta^t r^{\alpha/2-1}\,dr
       \leq\|U(\eta)\|_\infty+\frac{2C}{\alpha}t^{\alpha/2}.
\end{align}
At the initial time, the initial value satisfies
$
 U(0)=\log f_z-F$, $\|U(0)\|_\infty\leq B+K.
$
The solution is smooth at $t=0$. Letting $\eta\downarrow0$ in \eqref{U t bound}, we have
\[
 \|U(t)\|_\infty\leq B+K+\frac{2C}{\alpha}t^{\alpha/2}.
\]
The identity $\partial_t\psi=U$ then yields the bound
\[
 \begin{aligned}
 \|\psi(t)\|_\infty
 &\leq\|z\|_\infty+\int_0^t\|U(s)\|_\infty\,ds
 \leq B+(B+K)t
       +\frac{2C}{\alpha(1+\alpha/2)}t^{1+\alpha/2}.
 \end{aligned}
\]
The background heat operator is an $L^\infty$ contraction, so
$\|\widehat F(t)\|_\infty\leq K$. Since $H=U+\widehat F$, we also obtain
$\|H(t)\|_\infty\leq C_B$. Thus,
the bound in \eqref{eq:flow-basic} holds on the interval of existence.
Since $f_{\psi(t)}=e^H$, we may increase $C_B$ to obtain
\begin{equation}
 C_B^{-1}\leq f_{\psi(t)}\leq C_B.
 \label{eq:flow-density}
\end{equation}

For two solutions $\psi_1$ and $\psi_2$ to \eqref{eq:flow} on a common interval, their
difference satisfies
\[
 \partial_t(\psi_1-\psi_2)
 =\log\det g_{\psi_1}-\log\det g_{\psi_2}.
\]
 We set
$m=\sup_M(z_1-z_2)$ and fix $\eta>0$. If there were a positive maximum $(x_0,t_0)$ of
$
\tilde\psi= \psi_1-\psi_2-m-\eta t
$
on $M\times[0,T]$, that maximum would occur at
a positive time $t_0>0$ and
\begin{align}\label{maximum tilde psi}
\partial_t\tilde\psi(x_0,t_0)\ge0
\end{align} At that point,
$
(\tilde \psi_{i\bar j})\leq 0, g_{\psi_1}\leq g_{\psi_2},\det g_{\psi_1}\leq\det g_{\psi_2}.
$
Then, by the definition of $\tilde \psi$, we have at $(x_0,t_0)$
\[
 \partial_t \tilde\psi
 =\partial_t(\psi_1-\psi_2)-\eta
 \leq-\eta<0,
\]
contrary to \eqref{maximum tilde psi}. Therefore the upper bound is
$
 \sup_M(\psi_1(t)-\psi_2(t))\leq\sup_M(z_1-z_2)+\eta t.
$
Letting $\eta\downarrow0$ and interchanging the two solutions, we obtain
\[
 \inf_M(z_1-z_2)\leq\psi_1(t)-\psi_2(t)
 \leq\sup_M(z_1-z_2).
\]
This proves \eqref{eq:flow-contraction}
and establishes uniqueness for a fixed initial value.

For the special initial value $z=\varphi$,
$\log f_\varphi=F$. Thus the initial time derivative satisfies
$
 U(0)=\log f_\varphi-F=0.
$
Letting $\eta\downarrow0$ in \eqref{U t bound}, we obtain $\|U(t)\|_\infty\leq Ct^{\alpha/2}$.
The second estimate, obtained by integration, takes the form
\[
 \|\psi(t)-\varphi\|_\infty
 \leq C\int_0^t s^{\alpha/2}\,ds
 =\frac{C}{1+\alpha/2}t^{1+\alpha/2}.
\]
This proves \eqref{eq:flow-approximation}. The estimates hold for every
$T<\min\{T_*,1\}$. The proof that the solution extends to time one is given after Lemma~\ref{lem:trace-propagation}.
\end{proof}

The following lemma shows that an initial trace bound remains controlled along the auxiliary flow and gives the differential inequality used in the energy estimates. We apply the maximum principle to an exponentially weighted trace, using the time-integrable bound for the Laplacian of the forcing.

\begin{lem}
\label{lem:trace-propagation}
Under the assumptions of Lemma~\ref{lem:flow-basic}, we have
\beq
 \|n+\Delta_g\mathcal T_t^Fz\|_\infty
 \leq C_B\bigl(1+\|n+\Delta_gz\|_\infty\bigr),
 \qquad 0\leq t\leq1.
 \label{eq:trace-propagation}
\eeq
For a fixed sufficiently large $A$, the function $w=e^{-A\psi}v$, where $v=n+\Delta_g\psi$, also satisfies
\beq
 \Dpar w
 \leq L(t)w-w|\Na\log w|_\psi^2,\qquad
 L(t)=C_B(1+t^{\alpha/2-1}).
 \label{eq:parabolic-W}
\eeq
\end{lem}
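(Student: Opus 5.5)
We will follow the Yau/Aubin parabolic $C^2$ computation, applied to $w=e^{-A\psi}v$ with $v=n+\Delta_g\psi$, and use the uniform bounds \eqref{eq:flow-basic} and \eqref{eq:flow-density} from Lemma~\ref{lem:flow-basic}. The first step is the evolution of $v$. Since $\partial_t v=\Delta_g U=\Delta_g H-\Delta_g\widehat F$, and since the elliptic calculation behind \eqref{eq:trace-differential} gives
$$\Delta_\psi v\geq \Delta_g H+\sum_{i,j,k}\frac{|\psi_{i\bar jk}|^2}{\lambda_i\lambda_j}-C_gvq,$$
we obtain
$$\Dpar v\leq -\Delta_g\widehat F-\sum\frac{|\psi_{i\bar jk}|^2}{\lambda_i\lambda_j}+C_gvq.$$
The time-dependent forcing is handled by the heat estimate \eqref{eq:heat-time}, which gives $\|\Delta_g\widehat F(t)\|_\infty=\|\partial_t\widehat F(t)\|_\infty\leq Ct^{\alpha/2-1}$. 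Because $v\geq n(e^{H})^{1/n}\geq c_B>0$ by the arithmetic--geometric mean inequality and \eqref{eq:flow-density}, we have $|\Delta_g\widehat F|\leq Ct^{\alpha/2-1}v$. This term is integrable in time and is the source of the factor $t^{\alpha/2-1}$ in $L(t)$.

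The second step treats the weight. We have $\Dpar\psi=U-\Delta_\psi\psi=U-n+q$, and $|U|\leq C_B$, so
$$\Dpar(e^{-A\psi})=-Ae^{-A\psi}(U-n+q)-A^2e^{-A\psi}|\Na\psi|_\psi^2.$$
Expanding $\Dpar w=e^{-A\psi}\Dpar v+v\Dpar e^{-A\psi}-2\langle\Na e^{-A\psi},\Na v\rangle_\psi$, the cross term is handled as in Yau's argument. The Cauchy--Schwarz bound $|\Na v|_\psi^2\leq v\sum|\psi_{i\bar jk}|^2/(\lambda_i\lambda_j)$, recorded in the proof of Lemma~\ref{lem:differential}, shows that the third-order term dominates $|\Na v|^2_\psi/v$. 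Writing the cross term through $\Na\log w=\Na\log v-A\Na\psi$, one gets the exact identity
$$\Dpar w=e^{-A\psi}\Dpar v+\ldots$$
and combines the good third-order term with $-A^2v|\Na\psi|^2_\psi$ to produce $-w|\Na\log w|_\psi^2$. The bookkeeping can be checked via
$$\Dpar\log w=\frac{\Dpar w}{w}+|\Na\log w|^2_\psi.$$
The claimed inequality is therefore equivalent to
$$\Dpar\log w\leq L(t),\qquad\text{i.e.}\qquad \Dpar\log v-A\Dpar\psi\leq L(t),$$
and $\Dpar\log v\leq \Dpar v/v+|\Na \log v|^2_\psi\cdot 0$ needs care. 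The cleanest route is the standard computation $\Dpar\log v\leq C_gq+Ct^{\alpha/2-1}$, which uses the third-order term to absorb $|\Na v|^2_\psi/v^2$. Then
$$\Dpar\log w\leq C_gq+Ct^{\alpha/2-1}-A(U-n+q)\leq -q+C_B(1+t^{\alpha/2-1})$$
once $A=C_g+1$. This gives $\Dpar\log w\leq L(t)$, which is \eqref{eq:parabolic-W}.

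For \eqref{eq:trace-propagation}, we apply the maximum principle to $\log w-\int_0^tL$. Since $\int_0^1L<\infty$, this gives $\sup w(t)\leq C\sup w(0)$. Because $\|\psi\|_\infty\leq C_B$, we conclude $v\leq C_B(1+\|n+\Delta_gz\|_\infty)$. This holds first on $[0,\min\{T_*,1\})$ and then extends to $[0,1]$. The main obstacle is the third step: verifying carefully that the cross-term algebra indeed yields the \emph{full} $-w|\Na\log w|^2_\psi$ and not a fraction of it. If only a fraction $(1-\epsilon)$ is obtained, we would rescale $A$, or note that the precise constant only matters in the later energy estimate.
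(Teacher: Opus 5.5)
Your argument is correct and is essentially the paper's proof. Bound $\Dpar\log v\le C_gq-\Delta_g\widehat F/v$ via \eqref{eq:trace-differential}, subtract $A\Dpar\psi=A(U-n+q)$ with $A>C_g+1$, and use $v\ge c_B$ together with $|\Delta_g\widehat F|\le Ct^{\alpha/2-1}$ to get $\Dpar\log w\le L(t)$; the trace bound then follows from the maximum principle. Your closing worry is unfounded: $\Dpar w=w\,\Dpar\log w-w|\Na\log w|_\psi^2$ is an exact identity, so $\Dpar\log w\le L(t)$ gives the full term $-w|\Na\log w|_\psi^2$, and the detour through Yau's cross terms and $\Dpar e^{-A\psi}$ can simply be dropped.
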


\begin{proof}
With $q=\tr_\psi g$, it follows from \eqref{eq:trace-differential} and $\partial_tv=\Delta_g(H-\widehat F)$ that
$$
 \Dpar\log v\leq C_gq-\frac{\Delta_g\widehat F}{v}.
$$
The identity $\Dpar\psi=\partial_t\psi-\Delta_\psi\psi=U-n+q$ also holds. Choosing $A>C_g+1$ and using \eqref{eq:flow-basic}, we obtain, for $w=e^{-A\psi}v$,
$$
 \Dpar\log w\leq-A\Dpar\psi+C_gq-\frac{\Delta_g\widehat F}{v}
 \leq-q+C_B-\frac{\Delta_g\widehat F}{v}.
$$
Since $\partial_tP_tF=\Delta_gP_tF$, the heat estimate \eqref{eq:heat-alpha} gives $|\Delta_g\widehat F|\leq Ct^{\alpha/2-1}$. It follows that
$$
 \Dpar w=w\mathscr D\log w-w|\nabla\log w|_\psi^2
 \leq-wq+C_Bw+C_Bt^{\alpha/2-1}
                 -w|\Na\log w|_\psi^2.
$$
After discarding the nonpositive terms, we obtain
$
 \mathscr D w\leq C_Bw+C_Bt^{\alpha/2-1}.
$
The comparison with the scalar linear equation gives
$$
 \sup_Mw(t)
 \leq e^{C_Bt}\left(\sup_Mw(0)
                   +C_B\int_0^t s^{\alpha/2-1}\,ds\right).
$$
At time zero, $w(0)=e^{-Az}(n+\Delta_gz)$ and $\|z\|_\infty\leq B$.
It follows from $C_B^{-1}v\leq w\leq C_Bv$ that \eqref{eq:trace-propagation} holds.

To prove \eqref{eq:parabolic-W}, we use $v\geq n\exp(H/n)\geq c_B$, and hence $w\geq c_B$. The resulting bound is
\[
 C_Bt^{\alpha/2-1}
 \leq C_Bc_B^{-1}t^{\alpha/2-1}w.
\]
The additive term $C_Bt^{\alpha/2-1}$ can be absorbed into $L(t)w$. It also follows from $\prod_i\lambda_i=e^H\geq c_B$ and the trace bound that
$$
 \lambda_i\geq c_B\left(\sup_M v\right)^{-(n-1)}>0.
$$
For fixed smooth data, this bound is uniform on $[0,\min\{T_*,1\})$.
\end{proof}

\begin{proof}[Completion of the existence assertion in Lemma~\ref{lem:flow-basic}]
We argue by contradiction and suppose that the maximal existence time satisfies
$T_*\leq1$. The estimates already proved in Lemmas~\ref{lem:flow-basic}
and~\ref{lem:trace-propagation} give
\[
 \sup_{0\leq t<T_*}
 \bigl(\|\psi(t)\|_\infty+\|U(t)\|_\infty\bigr)\leq C_B,
 \qquad c_zg\leq g_{\psi(t)}\leq C_zg,
\]
where $c_z,C_z>0$ are independent of $t$. The metric constants may
depend on the initial trace.

On a fixed holomorphic coordinate ball, we choose a smooth real local
potential $\rho$ for $g$ and set $\Phi=\rho+\psi$. These choices give
\[
 \partial_t\Phi-\log\det(\Phi_{i\bar j})
 =-\log\det(g_{i\bar j})-P_tF,
 \qquad \lambda I\leq(\Phi_{i\bar j})\leq\Lambda I,
\]
for fixed positive constants $\lambda,\Lambda$. The prescribed
right-hand side is smooth in space and time, with bounded derivatives
on the coordinate ball and on $[T_*/4,1]$. By the interior regularity
estimate for the parabolic complex Monge--Amp\`ere equation, as a
special case of \cite[Theorem~5.1]{Chu}, we therefore obtain a uniform
$C^{2+\gamma,1+\gamma/2}$ bound for $\Phi$ on smaller backward
parabolic cylinders, for some $0<\gamma<1$. For points with
$T_*/2\leq t<T_*$, these cylinders can be chosen with a fixed spatial
radius and time length and with lower time boundary above $T_*/4$.
Thus their constants do not deteriorate as $t\uparrow T_*$.

In particular, the coefficients $g_\psi^{i\bar j}$ are uniformly
H\"older continuous in space and time. Differentiating the local
equation in a real coordinate direction $\partial_a$, we obtain
\[
 \bigl(\partial_t-g_\psi^{i\bar j}
              \partial_i\partial_{\bar j}\bigr)(\partial_a\Phi)
 =-\partial_a\log\det(g_{i\bar j})-\partial_a(P_tF),
\]
The time derivative also satisfies
\[
 (\partial_t-\Delta_\psi)U=-\partial_t(P_tF).
\]
We apply interior parabolic Schauder estimates and bootstrap on
nested cylinders. A finite spatial cover then gives, for every integer
$k\geq0$,
\[
 \sup_{T_*/2\leq t<T_*}
 \bigl(\|\psi(t)\|_{C^{k+2}(M,g)}
       +\|U(t)\|_{C^k(M,g)}\bigr)\leq C_k.
\]
Here $C_k$ may depend on $T_*$, the fixed smooth data, and the initial
trace, but is independent of $t<T_*$. This is the usual parabolic
Schauder bootstrap. A related argument appears in \cite[Section~2]{ST}.

For $T_*/2\leq s<t<T_*$, it follows that
\[
 \|\psi(t)-\psi(s)\|_{C^k(M,g)}
 \leq\int_s^t\|U(r)\|_{C^k(M,g)}\,dr
 \leq C_k|t-s|.
\]
Consequently, the potentials $\psi(t)$ converge in every spatial $C^k$ norm to a
smooth potential $\psi_*$ as $t\uparrow T_*$. The lower metric bound
passes to the limit, so $\omega_{\psi_*}\geq c_z\omega_g>0$.
The short-time existence theorem can now be applied at time $T_*$ to
\[
 \partial_t\widetilde\psi
 =\log\frac{\omega_{\widetilde\psi(t)}^n}{\omega_g^n}-P_tF,
 \qquad \widetilde\psi(T_*)=\psi_*.
\]
The forcing is smooth through $T_*$. It follows from the equation
and its differentiated identities that the time derivatives agree at the
joining time. This gives a smooth continuation beyond $T_*$,
contradicting maximality. Hence $T_*>1$, which completes the existence
assertion. The constants used only for continuation do not change the
quantitative dependence of the estimates proved above.
\end{proof}

To begin the improvement of the H\"older exponent, we need the initial regularity estimate to hold uniformly along the flow. The elliptic approximations are evolved with the same forcing, and comparison preserves their approximation error. The conclusion follows from the trace estimate and interpolation, with constants uniform in time.

\begin{lem}
\label{lem:flow-seed}
The solution $\psi(t)=\mathcal T_t^F\varphi$ satisfies, for every $0<\beta<\alpha$,
\beq
 \sup_{0\leq t\leq1}\|\psi(t)\|_{C^{1,\beta}}\leq C_\beta.
 \label{eq:flow-seed}
\eeq
\end{lem}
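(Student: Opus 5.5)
We mirror the elliptic interpolation argument of Proposition~\ref{prop:density-approximation}, transported along the flow by the comparison estimate \eqref{eq:flow-contraction}. For $0<\varepsilon\leq1/4$, let $\varphi_\varepsilon$ be the smooth normalized elliptic solutions with densities $f_\varepsilon=P_{\varepsilon^2}e^F$. We evolve them with the same forcing:
$$
 \psi_\varepsilon(t)=\mathcal T_t^F\varphi_\varepsilon .
$$
Since $\|\log f_\varepsilon\|_\infty\leq K$ and $\|\varphi_\varepsilon\|_\infty\leq C$ by Lemma~\ref{lem:C0}, the hypothesis of Lemma~\ref{lem:flow-basic} holds with a uniform constant $B$. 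Each $\psi_\varepsilon$ therefore exists on $[0,1]$ with uniform bounds. The forcing is common to both flows, so \eqref{eq:flow-contraction} and \eqref{eq:density-approximation} give, uniformly in $t\in[0,1]$,
$$
 \|\psi_\varepsilon(t)-\psi(t)\|_\infty\leq\|\varphi_\varepsilon-\varphi\|_\infty\leq C\varepsilon^{1+\alpha}(1+|\log\varepsilon|)\leq C_\rho\varepsilon^{1+\alpha-\rho}.
$$

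Next we bound the trace of the approximating flows. Lemma~\ref{lem:trace-propagation}, applied with initial datum $z=\varphi_\varepsilon$, together with the trace bound in \eqref{eq:density-approximation}, gives
$$
 \sup_{0\leq t\leq1}\|n+\Delta_g\psi_\varepsilon(t)\|_\infty\leq C_B\bigl(1+C_\delta\varepsilon^{-(1-\alpha)(1+\delta)}\bigr)\leq C_\delta\varepsilon^{-(1-\alpha)(1+\delta)}.
$$
This bound is uniform in $t$. Combining it with the uniform $L^\infty$ bound on $\psi_\varepsilon(t)$ from \eqref{eq:flow-basic}, the background elliptic estimate \eqref{eq:background-elliptic} yields $\|\psi_\varepsilon(t)\|_{C^{1,\gamma}}\leq C_{\gamma,\delta}\varepsilon^{-(1-\alpha)(1+\delta)}$ for every $\gamma<1$. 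No gradient estimate along the flow is needed here, because \eqref{eq:background-elliptic} uses only $\|\Delta_g\cdot\|_{L^p}$ and the sup norm.

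Finally we apply Lemma~\ref{lem:interpolation} at each fixed time $t$, with $t$ treated as the additional parameter in which the hypotheses hold uniformly. We take $a=1+\alpha-\rho$ and $b=(1-\alpha)(1+\delta)$, with the same parameter choice \eqref{parameters} as in the elliptic case. For any $\beta<\alpha$ this gives a $C^{1,\beta}$ bound on $\psi(t)$ that is uniform in $t\in[0,1]$, which is \eqref{eq:flow-seed}.

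The step needing most care is ensuring that the constant $C_B$ in Lemma~\ref{lem:trace-propagation} does not depend on $\varepsilon$. It depends only on $B$, $K$ and the background, and the integrable singularity $t^{\alpha/2-1}$ of $\Delta_gP_tF$ contributes only a bounded additive amount, so this should hold. A second point is that $\psi=\mathcal T_t^F\varphi$ requires smooth $\varphi$, which is the standing assumption of this subsection. Even so, $\psi(t)$ is only identified as a uniform limit of the $\psi_\varepsilon(t)$ in the interpolation step, which is exactly the setting Lemma~\ref{lem:interpolation} addresses.
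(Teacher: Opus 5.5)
Your proposal is correct and follows essentially the same route as the paper's proof. You evolve the elliptic approximants $\varphi_\varepsilon$ with the common forcing $P_tF$, transport the $C^0$ error by \eqref{eq:flow-contraction} and the trace bound by Lemma~\ref{lem:trace-propagation}, convert the trace bound to $C^{1,\gamma}$ growth via \eqref{eq:background-elliptic}, and then run the dyadic interpolation of Lemma~\ref{lem:interpolation} with $t$ as a uniform parameter and the parameters \eqref{parameters}.
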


\begin{proof}
For the elliptic potentials $\varphi_\varepsilon$ in Proposition~\ref{prop:density-approximation}, we set $\psi_\varepsilon(t)=\mathcal T_t^F\varphi_\varepsilon$. The positivity-preserving property of the heat operator, given by \eqref{mass identities} and \eqref{pt a}, gives a uniform bound for $\|\log f_{\varphi_\varepsilon}\|_\infty$.
Lemma~\ref{lem:C0} then gives a uniform bound for $\|\varphi_\varepsilon\|_\infty$.
Lemmas~\ref{lem:flow-basic} and~\ref{lem:trace-propagation} therefore apply.
It follows from \eqref{eq:flow-contraction} and \eqref{eq:density-approximation} that
$$
 \sup_{0\leq t\leq1}
 \|\psi_\varepsilon(t)-\psi(t)\|_\infty
 \leq  \|\varphi_\varepsilon-\varphi\|_\infty
 \leq C\varepsilon^{1+\alpha}(1+|\log\varepsilon|)
  \leq C_\rho\varepsilon^a.
$$
Combining \eqref{eq:trace-propagation} with \eqref{eq:density-approximation}, we obtain, for every $\delta>0$,
\[
 \begin{aligned}
 \sup_{0\leq t\leq1}
 \|n+\Delta_g\psi_\varepsilon(t)\|_\infty
 &\leq C\bigl(1+\|n+\Delta_g\varphi_\varepsilon\|_\infty\bigr)
  \leq C_\delta\varepsilon^{-(1-\alpha)(1+\delta)}.
 \end{aligned}
\]
 For any $0<\gamma<1$
and any finite $p>2n/(1-\gamma)$, the background elliptic
estimate \eqref{eq:background-elliptic} gives, on each time slice,
\[
 \begin{aligned}
 \|\psi_\varepsilon(t)\|_{C^{1,\gamma}}
 &\leq C_{p,\gamma}\|\psi_\varepsilon(t)\|_{W^{2,p}}
 \leq C_{p,\gamma}
       \bigl(\|\Delta_g\psi_\varepsilon(t)\|_{L^p(d\mu_g)}
                  +\|\psi_\varepsilon(t)\|_\infty\bigr)
 \leq C_{\gamma,\delta}
                 \varepsilon^{-(1-\alpha)(1+\delta)}.
 \end{aligned}
\]
Here $\|\Delta_g\psi_\varepsilon\|_\infty$
is bounded by $n+\|n+\Delta_g\psi_\varepsilon\|_\infty$, and
$d\mu_g(M)=1$.

The argument in Lemma~\ref{lem:interpolation} applies at each time with uniform constants. For each fixed $\beta<\alpha$, we choose the parameters as in Proposition~\ref{prop:density-approximation}.
The scales are
$
 \varepsilon_0=\frac14$, $ \varepsilon_j=\varepsilon_0 2^{-j}$,
 $d_j(t)=\psi_{\varepsilon_{j+1}}(t)-\psi_{\varepsilon_j}(t).
$
The estimates above give
\[
 \sup_{0\leq t\leq1}\|d_j(t)\|_\infty\leq C\varepsilon_j^a,
 \qquad
 \sup_{0\leq t\leq1}\|d_j(t)\|_{C^{1,\gamma}}
                  \leq C\varepsilon_j^{-b}.
\]
Applying the H\"older interpolation inequality
\eqref{eq:Holder-interpolation} separately at each time with the
same constants, we have
\[
 \begin{aligned}
 \sup_{0\leq t\leq1}\|d_j(t)\|_{C^{1,\beta}}
 &\leq C\varepsilon_j^{[a(\gamma-\beta)-b(1+\beta)]/(1+\gamma)}
 =C\varepsilon_j^{(\alpha-\beta)/(2(1+\gamma))}.
 \end{aligned}
\]
The exponent is positive and these bounds are summable in $j$.
The series
$
 \psi_{\varepsilon_0}(t)+\sum_{j=0}^{\infty}d_j(t)
$
therefore converges in $C^{1,\beta}(M)$ uniformly for $t\in[0,1]$.
Its partial sums are $\psi_{\varepsilon_{j+1}}(t)$, and the comparison
estimate identifies their uniform $C^0$ limit with $\psi(t)$.
The term $\psi_{\varepsilon_0}(t)$ has a uniform $C^{1,\gamma}$ bound
at the fixed scale $\varepsilon_0=1/4$. Adding that bound to
the convergent series, we obtain \eqref{eq:flow-seed}.
This argument treats $t$ as an additional parameter in
Lemma~\ref{lem:interpolation}. All estimates are uniform on the closed
interval, so the resulting bound includes $t=0$ and $t=1$.
\end{proof}

\subsection{Weighted energy estimates}
\label{sec:parabolic-energy}

We continue with the smooth solution
$\psi(t)=\mathcal T_t^F\varphi$ constructed in Section~\ref{sec:flow}.
The purpose of this subsection is to obtain a trace estimate at positive
times whose constant does not depend on the initial trace. The additional
assumption is a uniform spatial $C^{1,\beta}$ bound on the whole time interval.
We first derive a weighted energy inequality, then use spatial
interpolation to obtain an $L^p$ estimate, and finally apply parabolic
Moser iteration. The resulting trace bound improves the H\"older exponent
by a new approximation argument.

\begin{prop}
\label{prop:parabolic-trace}
We assume that $\psi(t)=\mathcal T_t^F\varphi$ satisfies
\begin{equation}
 \sup_{0\leq t\leq1}\|\psi(t)\|_{C^{1,\beta}}\leq B_\beta
 \quad\hbox{for some }0<\beta<1.
 \label{eq:flow-Holder-assumption}
\end{equation}
Then, with $d_\beta=(1-\beta)/(1+\beta)$, the trace satisfies, for every
$\delta>0$,
\begin{equation}
 \|n+\Delta_g\psi(t)\|_\infty
 \leq C_{\beta,\delta}t^{-d_\beta-\delta},
 \qquad 0<t\leq1.
 \label{eq:parabolic-trace}
\end{equation}
The constant may also depend on $B_\beta$, but not on the initial trace
or on higher derivatives of $F$.
\end{prop}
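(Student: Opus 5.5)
The plan is to follow the outline given in Step (3) of the introduction. First I prove an integral estimate for a weighted trace. Then I turn it into a pointwise bound at positive times by parabolic Moser iteration. Throughout I use the spatial $C^{1,\beta}$ bound \eqref{eq:flow-Holder-assumption} in place of any bound on the initial trace.

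\textbf{Weighted energy inequality.} Let $w=e^{-A\psi}v$ as in Lemma~\ref{lem:trace-propagation}, and set
$$
E_p(t)=\int_M e^{U^2}w^p\,d\mu_{\psi(t)}.
$$
The factor $e^{U^2}$ is chosen so that its time derivative produces a good term. Since $\Dpar U=-\partial_t\widehat F$, we get $\Dpar e^{U^2}\leq -2e^{U^2}|\Na U|_\psi^2(1+2U^2)+C|U|t^{\alpha/2-1}e^{U^2}$. I differentiate $E_p$ using \eqref{eq:parabolic-W} and the measure identity $\partial_t d\mu_\psi=(\Delta_\psi U)d\mu_\psi$ from \eqref{eq:flow-identities}. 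I integrate the measure term by parts, which produces $-\int\langle\Na(e^{U^2}w^p),\Na U\rangle_\psi$. The cross terms $\langle\Na w,\Na U\rangle$ combine with the good terms $-p(p-1)w^{p-2}|\Na w|^2$, $-pw^{p}|\Na\log w|^2$ and $-e^{U^2}|\Na U|^2$ into a positive definite quadratic form, up to a constant that depends on $p$. I expect this to give
$$
\frac{d}{dt}E_p+c_p\int e^{U^2}|\Na w^{p/2}|_\psi^2\,d\mu_\psi\leq C_pL(t)E_p .
$$
Because $L\in L^1(0,1)$, this is a Gronwall-type inequality. However, it is only useful once $E_p$ is bounded at some time. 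The constant must not depend on the initial trace, so I need an independent way to bound $\int w^p$.

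\textbf{Spatial interpolation for a starting $L^p$ bound.} This is where the $C^{1,\beta}$ bound enters. I integrate $v^p=v^{p-1}(n+\Delta_g\psi)$ by parts against the background measure. I would apply this with the estimate integrated in time, or for a regularized $\psi$ at scale $r$: the $C^{1,\beta}$ bound gives $\|\Delta_g P_{r^2}\psi\|_\infty\leq Cr^{\beta-1}$ by \eqref{eq:heat-gradient}, and $\|\Na(\psi-P_{r^2}\psi)\|_\infty\leq Cr^\beta$. I split $\Delta_g\psi=\Delta_gP_{r^2}\psi+\Delta_g(\psi-P_{r^2}\psi)$. The second piece is integrated by parts onto $\Na v^{p-1}$ and controlled by the energy term through $|\Na a|_g^2\leq v|\Na a|_\psi^2$. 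Integrating the energy inequality over $[t/2,t]$ supplies the space-time energy. Optimizing $r$ against the parabolic scale $\sqrt t$ should give a bound of the form $\int_{t/2}^t\int w^p\lesssim t\cdot t^{-p d_\beta-\delta}$. The exponent $d_\beta=(1-\beta)/(1+\beta)$ is exactly the balance of $r^{\beta-1}$ against the interpolation loss $r^{1+\beta}$ per unit of trace. The $t^{\delta}$ losses come from $p$ finite and from $L\notin L^\infty$ near $0$.

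\textbf{Parabolic Moser iteration.} I then run Moser iteration on shrinking intervals $[t(1-2^{-k-1}),t]$ with time cutoffs. I use the uniform Sobolev inequality \eqref{eq:Sobolev-estimate}, valid because $f_\psi$ is uniformly bounded by \eqref{eq:flow-density}. In each step the measure derivative $\Delta_\psi U$ is again absorbed via the quadratic form. The cutoffs cost a factor $t^{-1}$ at each step. The standard iteration then gives $\sup w(t)\leq C(t^{-1}\int_{t/2}^t\int w^{p})^{1/p}$, up to factors $t^{-O(1/p)}$. Taking $p$ large absorbs these factors into $t^{-\delta}$, which yields \eqref{eq:parabolic-trace}.

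\textbf{Main obstacle.} I expect the hardest step to be the interpolation that produces the initial $L^p$ bound with exactly the exponent $d_\beta$. It has to couple the integrated energy with the Hölder splitting so that the constants do not depend on the initial trace. A secondary difficulty is checking that the quadratic form in $(\Na w,\Na U)$ is positive for every $p$. If it fails for large $p$, I would replace $e^{U^2}$ by $e^{\kappa_pU^2}$, with $\kappa_p$ adjusted to $p$.
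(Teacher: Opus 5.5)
\textbf{Verdict: there is a genuine gap in the step that must make the bound independent of the initial trace.}

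\textbf{What is fine.} Your weighted energy inequality and the final Moser iteration match the paper (Lemma~\ref{lem:weighted-energy} and the cutoff/Sobolev steps in the proof of the proposition). Completing the square gives $\mathcal Q/h=|p\Na a+(2U+\tfrac12)\Na U|_\psi^2+\tfrac74|\Na U|_\psi^2$. So the form is positive with constants $cp^2$ and $Cp$ uniform in $p$, and no $p$-dependent weight $e^{\kappa_pU^2}$ is needed. That uniformity, or at least polynomial growth in $p$, is required for the Moser products to converge.

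\textbf{Where the argument fails.} The gap is in the step you flag yourself: producing an $L^p$ bound for the trace that does not depend on the initial trace. You bound $\int v^p$ by the heat splitting, then control the gradient term by ``integrating the energy inequality over $[t/2,t]$''. But the integrated inequality bounds $\int_{t/2}^t\int|\Na w^{p/2}|_\psi^2$ only by $E_p(t/2)+\int_{t/2}^tL E_p$. These are exactly the moments you are trying to bound, so the argument is circular. Run through Gronwall, it can only propagate the initial trace.

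\textbf{Missing idea.} Use the interpolation at each fixed time as a superlinear lower bound for the dissipation in terms of the moment itself.
\begin{itemize}
\item Lemma~\ref{lem:spatial-interpolation} gives $\int v^{p-3}|\Na v|_g^2\,d\mu_g\geq cX^{1+1/(pd_\beta)}-CX$, where $X=\int v^p\,d\mu_g$.
\item The smoothing scale there is chosen in terms of $X$, namely $r=LX^{-1/(p(1-\beta))}$, not in terms of $t$.
\item With $|\Na w|_g^2\le v|\Na w|_\psi^2$, this turns the energy inequality into the closed inequality $Y_p'+cY_p^{1+1/(pd_\beta)}\leq (C/t)Y_p$.
\item Compare with the barrier $B(t)=Lt^{-pd_\beta}$, where $cL^{1/(pd_\beta)}>C+pd_\beta$. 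Since $B\to\infty$ as $t\downarrow0$ while $Y_p(0)<\infty$ for each smooth solution, there is no first contact. Hence $Y_p(t)\le Lt^{-pd_\beta}$ on $(0,1]$, with $L$ independent of $Y_p(0)$. This is the only place where independence from the initial trace enters.
\end{itemize}

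\textbf{The scale choice matters.} Tying $r$ to the parabolic scale $r=\sqrt t$ does not give the stated exponent. The damping rate is then $r^{-2\beta}X^{1/p}=t^{-\beta}\|v\|_{L^p}$. Balancing it against $1/t$ yields only $\|v\|_{L^p}\lesssim t^{-(1-\beta)}$, which is weaker than $t^{-d_\beta}$. The effective optimal scale is $r\sim t^{1/(1+\beta)}$.

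\textbf{Where $\delta$ comes from.} In the paper the $L^p$ bound has no $t^{-\delta}$ loss, since $t^{\alpha/2-1}\le t^{-1}$ is absorbed by the barrier. The $\delta$ arises only from the factor $T^{-\sigma/p_0}$ in the Moser iteration.
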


The first lemma combines the evolution of $w$ with a weight depending on
$U$. The derivative of the evolving measure contributes to the gradient
terms and must be included before their signs are determined.

\begin{lem}
\label{lem:weighted-energy}
For a smooth solution of \eqref{eq:flow} satisfying
\eqref{eq:flow-basic}, we use $w$ from Lemma~\ref{lem:trace-propagation} and
set
\[
 h(U)=e^{U^2},\qquad
 Y_p(t)=\int_Mh(U)w^p\,d\mu_{\psi(t)}.
\]
There are positive constants $c,C$, independent of every real $p\geq2$,
such that, for $0<t\leq1$,
\begin{equation}
 Y_p'(t)+cp^2\int_Mw^{p-2}|\nabla w|_\psi^2\,d\mu_\psi
 \leq Cp(1+t^{\alpha/2-1})Y_p(t).
 \label{eq:weighted-energy}
\end{equation}
The constants may also depend on the bound in \eqref{eq:flow-basic}.
\end{lem}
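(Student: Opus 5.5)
The plan is to differentiate $Y_p$ directly and keep every term produced by the evolving measure, as the remark before the lemma requires. Using $\partial_t d\mu_{\psi}=(\Delta_\psi U)\,d\mu_\psi$ from \eqref{eq:flow-identities}, I write
\[
 Y_p'=\int_M\Bigl(h'(U)\,\partial_tU\,w^p+p\,h(U)w^{p-1}\partial_tw+h(U)w^p\Delta_\psi U\Bigr)d\mu_\psi .
\]
In the first two terms I split $\partial_t=\Dpar+\Delta_\psi$. For the $\Dpar$ parts I use $\Dpar U=-\partial_t\widehat F$ together with \eqref{heat estimate V}. This gives $|h'(U)\partial_t\widehat F|\le Ct^{\alpha/2-1}h(U)$, because $|U|\le C_B$ by \eqref{eq:flow-basic} and hence $|h'|=2|U|h\le Ch$. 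For $w$ I use \eqref{eq:parabolic-W}, which gives $L(t)w-w^{-1}|\Na w|_\psi^2$. These zeroth-order contributions are bounded by $Cp(1+t^{\alpha/2-1})Y_p$, and this is the right-hand side of \eqref{eq:weighted-energy}. The remaining terms all contain $\Delta_\psi$. I integrate each of them by parts with respect to $d\mu_\psi$ using \eqref{eq:conventions}; no boundary terms appear since $M$ is closed.

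Next I collect the gradient terms pointwise. Write $X=|\Na U|_\psi$, $Z=|\Na w|_\psi/w$, and regard $h w^p$ as a common weight. The diagonal good terms come from three sources: $-(p-1)p$ from integrating $p h w^{p-1}\Delta_\psi w$ by parts, $-p$ from the $-w|\Na\log w|^2$ term in \eqref{eq:parabolic-W}, and $-(h''+h')/h=-(2+4U^2+2U)$ from integrating $h'\Delta_\psi U\,w^p$ and $h w^p\Delta_\psi U$ by parts. Together they give
\[
 -p^2Z^2-(2+4U^2+2U)X^2 .
\]
The cross terms come from the same integrations by parts, one of them from the measure derivative. Each is a multiple of $\langle\Na U,\Na w\rangle_\psi$, and their total is bounded by $p(1+4|U|)XZ$. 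I then absorb the cross term with Young's inequality, splitting it between $\tfrac12p^2Z^2$ and the $X^2$ term. What remains is $-\tfrac12p^2Z^2$, which is the term $-cp^2\int w^{p-2}|\Na w|_\psi^2\,d\mu_\psi$ since $h\ge1$. This absorption works precisely when the $2\times2$ quadratic form in $(X,Z)$ is positive definite with a margin, uniformly in $p$. Both the cross term and the $Z^2$ term scale like $p$ and $p^2$ respectively, so $p$ factors out and the condition involves only $U$.

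I expect the main obstacle to be this positivity check. With the fixed weight $h=e^{U^2}$ the discriminant condition reads $(1+4|U|)^2<4(2+4U^2-2|U|)$, which holds for $|U|<7/16$ but not for larger $|U|$. My first attempt will be to use the smallness of $U$ along $\psi=\mathcal T_t^F\varphi$. By \eqref{eq:flow-approximation}, $|U|\le Ct^{\alpha/2}$, so the condition holds on $(0,t_0]$ with $t_0$ depending only on the data. On $[t_0,1]$ the time singularity is harmless, and there the plan is to give up the factor $p^2$ in part: absorb the cross term into $\tfrac12 p^2 Z^2$ plus $Cp^2 X^2$, and control $\int h w^p X^2$ by a separate weighted estimate for $\Na U$. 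If that separate estimate is unavailable, the fallback is to replace $U^2$ by $\lambda U^2$ with $\lambda$ chosen from $C_B$, since the statement allows constants depending on \eqref{eq:flow-basic}. Once the quadratic form is shown to be nonnegative, dropping the $X^2$ terms yields \eqref{eq:weighted-energy}.
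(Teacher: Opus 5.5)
Your term-by-term bookkeeping is correct, including the contribution of $\partial_t d\mu_\psi$. After using \eqref{eq:parabolic-W}, the gradient terms in $Y_p'$ contribute $-\int_M hw^p\,\mathcal R\,d\mu_\psi$, where
\[
 \mathcal R=p^2|\nabla\log w|_\psi^2+p(4U+1)\langle\nabla\log w,\nabla U\rangle_\psi+(4U^2+2U+2)|\nabla U|_\psi^2 .
\]
The gap is in the positivity step. You bound the cross coefficient by $1+4|U|$ and the $|\nabla U|_\psi^2$ coefficient from below by $2+4U^2-2|U|$. These two worst cases occur for opposite signs of $U$, so the restriction $|U|<7/16$ comes only from decoupling them. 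If you keep the sign, the discriminant is $(4U+1)^2-4(4U^2+2U+2)=-7<0$ for every real $U$. Equivalently,
\[
 \mathcal R=\Bigl|p\nabla\log w+\bigl(2U+\tfrac12\bigr)\nabla U\Bigr|_\psi^2+\tfrac74|\nabla U|_\psi^2 ,
\]
which gives $p^2|\nabla\log w|_\psi^2\le\bigl[2+\tfrac87(2B_1+\tfrac12)^2\bigr]\mathcal R$ with $B_1=\sup|U|$ from \eqref{eq:flow-basic}. This completed square is the missing idea. It is also where $c$ acquires its dependence on \eqref{eq:flow-basic}.

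Without it, none of your fallbacks closes the argument. The decay $|U|\le Ct^{\alpha/2}$ in \eqref{eq:flow-approximation} holds only for the initial value $\varphi$, while the lemma is stated for any solution satisfying \eqref{eq:flow-basic}. Even for $\mathcal T_t^F\varphi$, it gives $|U|<7/16$ only on a short interval $(0,t_0]$. On $[t_0,1]$ the defect lies in the quadratic form, not in the time singularity. A separate bound such as $\int_M hw^p|\nabla U|_\psi^2\,d\mu_\psi\le CpY_p$ is not available from the earlier results. The weight $h$ is there precisely so that the $|\nabla U|_\psi^2$ terms appear with a good sign instead of having to be estimated. Finally, for the weight $e^{\lambda U^2}$ your sign-decoupled estimate gives the condition $1+16\lambda|U|<8\lambda$. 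This fails for every $\lambda>0$ as soon as $|U|\ge\frac12$. With the sign kept, the discriminant is $1-8\lambda$, so any $\lambda>\frac18$ works and $\lambda=1$ already suffices.
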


\begin{proof}
\textbf{Step 1.}
We write $a=\log w$ and $b=\partial_t\widehat F$ within this proof. The following inequalities follow from
Lemma~\ref{lem:trace-propagation} and \eqref{eq:flow-identities}.
\begin{align}\label{Dw L b}
 \mathscr Dw\leq L(t)w-w|\nabla a|_\psi^2,
 \qquad L(t)\leq C(1+t^{\alpha/2-1}),
 \qquad \mathscr DU=-b,
 \qquad |b|\leq Ct^{\alpha/2-1}.
\end{align}
For a positive function $w$ and real $p\geq2$, we have
\[
\begin{aligned}
 \mathscr D(w^p)
 &=pw^{p-1}\mathscr Dw-p(p-1)w^{p-2}|\nabla w|_\psi^2
 \leq pL(t)w^p-pw^p|\nabla a|_\psi^2
             -p(p-1)w^p|\nabla a|_\psi^2\\
 &=pL(t)w^p-p^2w^p|\nabla a|_\psi^2.
\end{aligned}
\]
For $h=h(U)$, the chain rule gives
\[
 \mathscr Dh(U)=-h'b-h''|\nabla U|_\psi^2,
 \qquad \nabla h=h'\nabla U,\qquad
 \nabla(w^p)=pw^p\nabla a.
\]
The product rule for $\mathscr D$ therefore gives
\[
\begin{aligned}
 \mathscr D(hw^p)
 &\leq (pLh-h'b)w^p-p^2hw^p|\nabla a|_\psi^2
             -h''w^p|\nabla U|_\psi^2
-2ph'w^p\langle\nabla a,\nabla U\rangle_\psi.
\end{aligned}
\]

\textbf{Step 2.}
By \eqref{eq:flow-identities},
$
 \partial_t d\mu_{\psi(t)}=(\Delta_\psi U)d\mu_{\psi(t)}.
$
Hence, we have
\[
\begin{aligned}
 Y_p'
 &=\int_M\partial_t(hw^p)\,d\mu_\psi
                 +\int_Mhw^p\Delta_\psi U\,d\mu_\psi
 =\int_M\mathscr D(hw^p)\,d\mu_\psi
                 +\int_Mhw^p\Delta_\psi U\,d\mu_\psi.
\end{aligned}
\]
The last term satisfies
\[
\begin{aligned}
 \int_Mhw^p\Delta_\psi U\,d\mu_\psi
 &=-\int_M\langle\nabla(hw^p),\nabla U\rangle_\psi\,d\mu_\psi
 =-\int_Mw^p\bigl(
       ph\langle\nabla a,\nabla U\rangle_\psi
       +h'|\nabla U|_\psi^2\bigr)\,d\mu_\psi.
\end{aligned}
\]
Combining this identity with Step 1, we have
\begin{align*}
 &Y_p'\leq\int_M(pLh-h'b)w^p\,d\mu_\psi
                  -\int_Mw^p\mathcal Q\,d\mu_\psi,\\
& \mathcal Q
 =p^2h|\nabla a|_\psi^2
   +p(2h'+h)\langle\nabla a,\nabla U\rangle_\psi
   +(h''+h')|\nabla U|_\psi^2.
\end{align*}

\textbf{Step 3.}
For $h(U)=e^{U^2}$,
the derivative ratios are
$ \frac{h'}h=2U$, $\frac{h''}h=2+4U^2$.
Consequently, we have
\[
\begin{aligned}
 \frac{\mathcal Q}{h}
 &=p^2|\nabla a|_\psi^2
    +p(4U+1)\langle\nabla a,\nabla U\rangle_\psi
    +(4U^2+2U+2)|\nabla U|_\psi^2\\
 &=\left|p\nabla a+\left(2U+\frac12\right)\nabla U\right|_\psi^2
                          +\frac74|\nabla U|_\psi^2.
\end{aligned}
\]
The coefficient $7/4$ follows from
$(4U^2+2U+2)-(2U+1/2)^2=7/4$.
We take $B_1$ such that $|U|\leq B_1$. Then
\[
\begin{aligned}
 |p\nabla a|_\psi^2
 &\leq2\left|p\nabla a+\left(2U+\frac12\right)\nabla U\right|_\psi^2
       +2\left(2B_1+\frac12\right)^2|\nabla U|_\psi^2
 \leq\left[2+\frac87\left(2B_1+\frac12\right)^2\right]
          \frac{\mathcal Q}{h}.
\end{aligned}
\]
Thus $\mathcal Q\geq cp^2h|\nabla a|_\psi^2$.
It follows from the bounds
$
 1\leq h\leq e^{B_1^2}$, $|h'|\leq2B_1h
$
and the estimates for $L$ and $b$ in \eqref{Dw L b} that
\[
 pLh-h'b\leq Cp(1+t^{\alpha/2-1})h,
 \qquad p\geq2.
\]
The following comparison also holds.
\[
 w^ph|\nabla a|_\psi^2
     =h w^{p-2}|\nabla w|_\psi^2
     \geq w^{p-2}|\nabla w|_\psi^2.
\]
Substituting these bounds into the inequality for $Y_p'$, we obtain
\eqref{eq:weighted-energy}.
\end{proof}

The next estimate is entirely spatial. It uses heat regularization with
respect to the fixed background metric and does not require a uniform
upper or lower bound for the determinant of the varying metric.

\begin{lem}
\label{lem:spatial-interpolation}
For fixed $0<\beta<1$ and a finite real $p\geq2$, we consider a smooth
K\"ahler potential $\varphi$ with $\|\varphi\|_{C^{1,\beta}}\leq B_\beta$ and set
\[
 v=n+\Delta_g\varphi,\qquad X=\int_Mv^p\,d\mu_g,\qquad
 \mathcal E_p=\int_Mv^{p-3}|\nabla v|_g^2\,d\mu_g,
 \qquad d_\beta=\frac{1-\beta}{1+\beta}.
\]
Then the following inequality holds.
\begin{equation}
 \mathcal E_p\geq cX^{\,1+1/(pd_\beta)}-CX.
 \label{eq:spatial-interpolation}
\end{equation}
The constants $c,C>0$ may depend on $M,g,n,p,\beta,B_\beta$.
\end{lem}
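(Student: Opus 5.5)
The plan is to compare $v$ with its background heat regularization $P_sv$ at a scale $s$ chosen in terms of $X$. At that scale, the smooth part $P_sv$ is controlled by the $C^{1,\beta}$ bound. The remainder $v-P_sv$ is controlled by the weighted gradient energy $\mathcal E_p$. Throughout, $v>0$ because $\omega_\varphi>0$, and $\int_Md\mu_g=1$, so H\"older's inequality gives $\int_Mv^{p-1}\,d\mu_g\leq X^{(p-1)/p}$.

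\textbf{Step 1: splitting $X$.} For $0<s\leq1$ we write
$$
X=\int_Mv^{p-1}P_sv\,d\mu_g+\int_Mv^{p-1}(v-P_sv)\,d\mu_g .
$$
Since $P_s$ commutes with $\Delta_g$, we have $P_sv=n+\Delta_gP_s\varphi$. The second estimate in \eqref{eq:heat-gradient} then gives $\|P_sv\|_\infty\leq Cs^{(\beta-1)/2}$. Hence the first integral is at most $Cs^{(\beta-1)/2}X^{1-1/p}$.

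\textbf{Step 2: the remainder.} We write $v-P_sv=-\int_0^s\Delta_gP_\tau v\,d\tau$ and integrate by parts in space:
$$
\int_Mv^{p-1}(v-P_sv)\,d\mu_g=\int_0^s\int_M\langle\nabla v^{p-1},\nabla\Delta_gP_\tau\varphi\rangle_g\,d\mu_g\,d\tau .
$$
The kernel bounds \eqref{Gaussian bounds}--\eqref{kernel mass}, applied after subtracting the first-order Taylor polynomial exactly as in the derivation of \eqref{eq:heat-gradient}, should give $\|\nabla\Delta_gP_\tau\varphi\|_\infty\leq C\tau^{\beta/2-1}B_\beta$. This power is integrable in $\tau$, so the integral is bounded by
$$
Cs^{\beta/2}(p-1)\int_Mv^{p-2}|\nabla v|_g\,d\mu_g .
$$
We split $v^{p-2}|\nabla v|=v^{(p-3)/2}|\nabla v|\cdot v^{(p-1)/2}$ and apply Cauchy--Schwarz. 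This bounds the remainder by $Cs^{\beta/2}\mathcal E_p^{1/2}X^{(p-1)/(2p)}$.

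\textbf{Step 3: choosing the scale.} Combining the two steps,
$$
X\leq C_1s^{(\beta-1)/2}X^{1-1/p}+C_2s^{\beta/2}\mathcal E_p^{1/2}X^{(p-1)/(2p)} .
$$
We choose $s$ with $C_1s^{(\beta-1)/2}X^{-1/p}=1/2$, that is, $s=(2C_1)^{2/(1-\beta)}X^{-2/(p(1-\beta))}$. This is admissible ($s\leq1$) once $X\geq X_0(C_1,p,\beta)$. Absorbing the first term gives
$$
\mathcal E_p\geq cX^{1+1/p}s^{-\beta}=cX^{1+\frac1p\left(1+\frac{2\beta}{1-\beta}\right)}=cX^{1+1/(pd_\beta)},
$$
using $1+\frac{2\beta}{1-\beta}=\frac{1+\beta}{1-\beta}=d_\beta^{-1}$.

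If $X<X_0$, then $cX^{1+1/(pd_\beta)}\leq cX_0^{1/(pd_\beta)}X$. Since $\mathcal E_p\geq0$, \eqref{eq:spatial-interpolation} holds after enlarging $C$ so that $C\geq cX_0^{1/(pd_\beta)}$. The constants depend only on $M,g,n,p,\beta,B_\beta$, and no determinant bound on $\omega_\varphi$ is used.

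\textbf{Main obstacle.} The delicate point is the kernel estimate $|\nabla\Delta_gP_\tau\varphi|\leq C\tau^{\beta/2-1}$ for $C^{1,\beta}$ functions, which must be integrable near $\tau=0$. To prove it, I would subtract the linear Taylor polynomial of $\varphi$ at $x$ in a coordinate chart. I would then use the second-derivative kernel bound $|\nabla_x\Delta_xk_\tau|\leq C\tau^{-n-3/2}e^{-d^2/(C\tau)}$, which follows from local parabolic estimates as in \eqref{Gaussian bounds}, together with \eqref{kernel mass} for $\sigma=1+\beta$. The coordinate and cutoff errors are smooth and bounded, as in the earlier heat estimates.

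If this estimate proved awkward, a fallback is to use the symmetrized form $\frac12\iint k_s(v^{p-1}(y)-v^{p-1}(z))(v(y)-v(z))$ instead of the time integral. That route only needs first-derivative kernel bounds.
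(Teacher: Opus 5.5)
Your proof is correct and follows essentially the paper's route. You heat-regularize $\varphi$ at a scale tied to $X$, bound $n+\Delta_gP_s\varphi$ by $Cs^{(\beta-1)/2}$, integrate the remainder by parts against $\nabla v$, apply Cauchy--Schwarz with $\mathcal E_p$, and balance. The only difference is that the paper tests with $v^p$ and absorbs $Z=\int_Mv^{p+1}\,d\mu_g$ via Young and $Z\geq X^{1+1/p}$, whereas you test with $v^{p-1}$ and use $\int_Mv^{p-1}\,d\mu_g\leq X^{1-1/p}$; both give the same exponent.

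Your ``main obstacle'' disappears if you write $v-P_sv=\Delta_g(\varphi-P_s\varphi)$ and integrate by parts once, since the first estimate in \eqref{eq:heat-gradient} already gives $\|\nabla(\varphi-P_s\varphi)\|_\infty\leq Cs^{\beta/2}$. Alternatively, your kernel bound follows at once from $\nabla\Delta_gP_\tau\varphi=\nabla P_{\tau/2}(\Delta_gP_{\tau/2}\varphi)$ together with \eqref{nabla Ps} and \eqref{eq:heat-gradient}.
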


\begin{proof}

For $0<r\leq1$, we set $\varphi_r=P_{r^2}\varphi$. The background heat
estimates \eqref{eq:heat-gradient} give
\begin{align}\label{nabla vphi vphi r}
 \|\nabla(\varphi-\varphi_r)\|_\infty\leq Cr^\beta,
 \qquad \|\Delta_g\varphi_r\|_\infty\leq Cr^{\beta-1}.
\end{align}
The K\"ahler condition gives $v>0$. Hence all the real powers
of $v$ below are defined, including $v^{p-3}$ when $2\leq p<3$.
The quantity $Z$ is defined by $Z=\int_Mv^{p+1}\,d\mu_g$. Since
$v=n+\Delta_g\varphi_r+\Delta_g(\varphi-\varphi_r)$, integrating by
parts and using \eqref{nabla vphi vphi r}, we obtain
\[
\begin{aligned}
 Z
 &=nX+\int_Mv^p\Delta_g\varphi_r\,d\mu_g
                +\int_Mv^p\Delta_g(\varphi-\varphi_r)\,d\mu_g\\
 &=nX+\int_Mv^p\Delta_g\varphi_r\,d\mu_g
     -p\int_Mv^{p-1}
       \langle\nabla v,\nabla(\varphi-\varphi_r)\rangle_g\,d\mu_g\\
 &\leq(n+Cr^{\beta-1})X
             +Cpr^\beta\int_Mv^{p-1}|\nabla v|_g\,d\mu_g.
\end{aligned}
\]
Because $r\leq1$ and $\beta<1$, we have $r^{\beta-1}\geq1$, so the
first term is at most $Cr^{\beta-1}X$. For the second term, the
factorization is
$
 v^{p-1}|\nabla v|_g
  =\bigl(v^{(p-3)/2}|\nabla v|_g\bigr)v^{(p+1)/2}
$.
The Cauchy--Schwarz inequality then gives
\[
 \int_Mv^{p-1}|\nabla v|_g\,d\mu_g
 \leq\mathcal E_p^{1/2}Z^{1/2}.
\]
Therefore the following estimate holds.
\[
 Z\leq Cr^{\beta-1}X+Cpr^\beta\mathcal E_p^{1/2}Z^{1/2}.
\]
Young's inequality, in the form
$
 Cpr^\beta\mathcal E_p^{1/2}Z^{1/2}
     \leq\frac12Z+C_p r^{2\beta}\mathcal E_p,
$
yields
\begin{align}\label{bound Z}
 Z\leq C_1r^{\beta-1}X+C_2r^{2\beta}\mathcal E_p.
\end{align}

Since the measure $d\mu_g$ has mass one, H\"older's inequality implies
\[
 X=\int_M(v^{p+1})^{p/(p+1)}\,d\mu_g
       \leq Z^{p/(p+1)},
 \qquad Z\geq X^{1+1/p}.
\]
It follows from \eqref{bound Z} that
\[
 X^{1+1/p}\leq C_1r^{\beta-1}X+C_2r^{2\beta}\mathcal E_p.
\]
Since $\beta<1$, a fixed $L\geq1$ can be chosen so large that $C_1L^{\beta-1}\leq1/2$.
When
$X\geq X_0:=L^{p(1-\beta)}$, we set
\[
 r=LX^{-1/(p(1-\beta))}\leq1.
\]
This choice gives
\[
 r^{\beta-1}X=L^{\beta-1}X^{1+1/p},\qquad
 r^{2\beta}=L^{2\beta}X^{-2\beta/(p(1-\beta))}.
\]
The first term can be absorbed, giving
\[
 \frac12X^{1+1/p}\leq
 C_2L^{2\beta}X^{-2\beta/(p(1-\beta))}\mathcal E_p.
\]
Consequently, the lower bound is
\[
\begin{aligned}
 \mathcal E_p
 &\geq cX^{1+1/p+2\beta/(p(1-\beta))}
 =cX^{1+(1+\beta)/(p(1-\beta))}
  =cX^{1+1/(pd_\beta)}.
\end{aligned}
\]
For $0<X<X_0$, nonnegativity of $\mathcal E_p$ gives the stated
inequality after choosing $C\geq cX_0^{1/(pd_\beta)}$, since then
$cX^{1+1/(pd_\beta)}-CX\leq0$. This proves
\eqref{eq:spatial-interpolation} for every $X$.
\end{proof}

Combining the spatial lower bound with the weighted energy inequality, we
obtain a differential inequality with a superlinear dissipative term.
A comparison with a singular time barrier then removes dependence on the
initial trace moment.

\begin{lem}
\label{lem:parabolic-Lp}
Under \eqref{eq:flow-Holder-assumption}, the following estimate holds for every finite $p\geq2$.
\begin{equation}
 \int_M(n+\Delta_g\psi(t))^p\,d\mu_g
 \leq C_{p,\beta}t^{-pd_\beta},\qquad
 d_\beta=\frac{1-\beta}{1+\beta},\quad 0<t\leq1.
 \label{eq:parabolic-Lp}
\end{equation}
The constant may also depend on $B_\beta$, but not on the initial trace.
\end{lem}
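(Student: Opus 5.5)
We combine the weighted energy inequality of Lemma~\ref{lem:weighted-energy} with the spatial interpolation of Lemma~\ref{lem:spatial-interpolation}, obtaining a superlinear ODE inequality for $Y_p$, and then compare with a singular barrier. Throughout, the flow bounds \eqref{eq:flow-basic} hold with a constant depending only on $M,g,n,\alpha,K$, because the initial data are $z=\varphi$ with $\log f_\varphi=F$ and Lemma~\ref{lem:C0} applies. In particular $1\le h(U)\le C$, $C^{-1}v\le w\le Cv$, and $C^{-1}d\mu_g\le d\mu_\psi\le C\,d\mu_g$. Hence $Y_p$ is comparable to $X(t)=\int_M v^p\,d\mu_g$ up to factors $C^p$, which is harmless for fixed $p$.

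\textbf{Step 1: dissipation in terms of $\mathcal E_p$.} The dissipation in \eqref{eq:weighted-energy} is $\int_M w^{p-2}|\Na w|_\psi^2\,d\mu_\psi$. Using $|\Na a|_g^2\le v|\Na a|_\psi^2$ from \eqref{eq:algebraic-comparisons}, we get
\[
 w^{p-2}|\Na w|_\psi^2\ge v^{-1}w^{p-2}|\Na w|_g^2 .
\]
Since $w=e^{-A\psi}v$, we have $\Na w=e^{-A\psi}(\Na v-Av\Na\psi)$, and $|\Na\psi|_g\le B_\beta$. Thus $|\Na v|_g^2\le C(e^{2A\psi}|\Na w|_g^2+v^2)$, which gives
\[
 \int_M w^{p-2}|\Na w|_\psi^2\,d\mu_\psi\ge c^p\mathcal E_p-C^p\int_M v^{p-1}\,d\mu_g\ge c^p\mathcal E_p-C^pX,
\]
where the last step uses $v\ge c$. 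Lemma~\ref{lem:spatial-interpolation}, applied to $\psi(t)$ at each time via \eqref{eq:flow-Holder-assumption}, then gives $\mathcal E_p\ge cX^{1+1/(pd_\beta)}-CX$. Inserting this into \eqref{eq:weighted-energy} and converting $X$ to $Y_p$ yields, for fixed $p$,
\[
 Y_p'\le -c_1Y_p^{1+\kappa}+C_1(1+t^{\alpha/2-1})Y_p,\qquad \kappa=\frac1{pd_\beta}.
\]

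\textbf{Step 2: barrier comparison.} Let $\Lambda(t)=\int_0^tC_1(1+s^{\alpha/2-1})\,ds$, which is bounded on $[0,1]$, and set $\tilde Y=e^{-\Lambda}Y_p$. Then $\tilde Y'\le -c_2\tilde Y^{1+\kappa}$. Integrating $(\tilde Y^{-\kappa})'\ge\kappa c_2$ from $0$, and using only $\tilde Y(0)^{-\kappa}\ge0$, we get $\tilde Y(t)\le(\kappa c_2t)^{-1/\kappa}$. The initial value is finite since $\varphi$ is smooth, but it does not enter the bound. Therefore
\[
 X(t)\le CY_p\le C_{p,\beta}\,t^{-pd_\beta},
\]
which is \eqref{eq:parabolic-Lp}. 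The constant is independent of the initial trace.

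\textbf{Main obstacle.} The delicate point is Step~1. We must make sure that the $\psi$-gradient dissipation controls the background quantity $\mathcal E_p$ with only a linear error in $X$, and that the $v$-factor lost in comparing the two norms is exactly what turns $v^{p-2}$ into the $v^{p-3}$ appearing in $\mathcal E_p$. The cross term from $\Na\psi$ is handled by the $C^1$ bound together with $v\ge c>0$, which in turn comes from the density bound. We must also check that $t^{\alpha/2-1}$ is integrable, so that the linear forcing only produces a bounded multiplicative factor in the comparison.
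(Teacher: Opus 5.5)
Your proposal is correct and follows the paper's proof. The same bound $\mathcal E_p\le C_p\int_M w^{p-2}|\Na w|_\psi^2\,d\mu_\psi+C_pY_p$, combined with Lemma~\ref{lem:spatial-interpolation}, gives the same superlinear inequality for $Y_p$, and only the final ODE step differs. You remove the linear term with the integrating factor $e^{-\Lambda}$, which uses the integrability of $t^{\alpha/2-1}$, and then integrate the Bernoulli inequality explicitly; the paper instead bounds $1+t^{\alpha/2-1}\le C/t$ and compares with the barrier $Lt^{-pd_\beta}$, and both give the same rate independent of $Y_p(0)$.
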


\begin{proof}
For fixed $p\geq2$, it follows from $w=e^{-A\psi}v$ and $d\mu_\psi=e^Hd\mu_g$ that
\[
 Y_p(t)=\int_Me^{U^2-pA\psi+H}v^p\,d\mu_g.
\]
If $|\psi|\leq B_0$, $|U|\leq B_1$, and $|H|\leq B_2$,
then we have
\begin{align}\label{lem:parabolic-Lp X Yp}
 e^{-pAB_0-B_2}X(t)
 \leq Y_p(t)\leq e^{B_1^2+pAB_0+B_2}X(t).
\end{align}
Thus the two quantities are comparable, with $C_p^{-1}X\leq Y_p\leq C_pX$.
The identity $v=e^{A\psi}w$ gives
$
 \nabla v=e^{A\psi}(\nabla w+Aw\nabla\psi).
$
Combining this identity with $|\nabla\psi|_g\leq B_\beta$, we obtain
\[
\begin{aligned}
 v^{p-3}|\nabla v|_g^2
 &\leq 2e^{(p-1)A\psi}w^{p-3}|\nabla w|_g^2
       +2A^2e^{(p-1)A\psi}w^{p-1}|\nabla\psi|_g^2\\
 &\leq C_pw^{p-3}|\nabla w|_g^2+C_pw^{p-1}.
\end{aligned}
\]
Since $v\geq ne^{H/n}\geq c>0$, we have
the lower bound $w\geq c>0$. That implies $w^{p-1}\leq c^{-1}w^p$.
The algebraic comparison \eqref{eq:algebraic-comparisons} gives
\[
 |\nabla w|_g^2\leq v|\nabla w|_\psi^2
                    \leq Cw|\nabla w|_\psi^2.
\]
After integration and comparison of the measures, we obtain
\[
 \mathcal E_p(t)
 \leq C_p\int_Mw^{p-2}|\nabla w|_\psi^2\,d\mu_\psi+C_pY_p(t).
\]
It follows from Lemma~\ref{lem:spatial-interpolation}, applied to $\psi(t)$ with the
uniform bound \eqref{eq:flow-Holder-assumption}, and \eqref{lem:parabolic-Lp X Yp} that
\[
 \mathcal E_p(t)
 \geq cX(t)^{1+1/(pd_\beta)}-CX(t)
 \geq c'Y_p(t)^{1+1/(pd_\beta)}-C'Y_p(t).
\]
Combining the last two inequalities and dividing by the fixed positive
coefficient in the first one, we obtain
\[
 \int_Mw^{p-2}|\nabla w|_\psi^2\,d\mu_\psi
 \geq cY_p^{1+1/(pd_\beta)}-CY_p.
\]
All constants at this stage may depend on $p$ and $\beta$.
Substituting this estimate into \eqref{eq:weighted-energy}, we obtain,
after changing constants depending on the fixed $p$,
\[
 Y_p'+cY_p^{1+1/(pd_\beta)}
   \leq C(1+t^{\alpha/2-1})Y_p
   \leq \frac CtY_p,
 \qquad 0<t\leq1.
\]
The last inequality uses $1\leq t^{-1}$ and
$t^{\alpha/2-1}\leq t^{-1}$ on $(0,1]$.

We set $m=pd_\beta>0$ and choose $L>0$ so that
$cL^{1/m}>C+m$. The choice $B(t)=Lt^{-m}$ gives
\[
 B'(t)=-\frac mtB(t),\qquad
 B(t)^{1+1/m}=\frac{L^{1/m}}tB(t).
\]
Consequently, we have
\[
 B'+cB^{1+1/m}-\frac CtB
       =\frac{cL^{1/m}-m-C}{t}B>0.
\]
For each fixed smooth solution, the function $Y_p$ is continuous at zero,
with $Y_p(0)<\infty$, whereas the barrier $B(t)$ diverges as $t\downarrow0$.
Thus $Y_p(t)<B(t)$ for all sufficiently small positive times.
If a first contact occurred at $t_0>0$, then
$Y_p(t_0)=B(t_0)$ and $(Y_p-B)'(t_0)\geq0$. The two differential
inequalities would instead give
\[
 (Y_p-B)'(t_0)
 \leq\frac C{t_0}B(t_0)-cB(t_0)^{1+1/m}-B'(t_0)<0,
\]
a contradiction. Hence $Y_p(t)\leq Lt^{-m}$ for every $0<t\leq1$.
The time at which the comparison begins can depend on the individual
solution, but $L$ depends only on the constants in the differential
inequality, not on $Y_p(0)$ or the initial trace. Thus \eqref{eq:parabolic-Lp} follows from the comparison of $X$ and $Y_p$.
\end{proof}

The preceding results lead to Proposition~\ref{prop:parabolic-trace}.
\begin{proof}[Proof of Proposition~\ref{prop:parabolic-trace}] We divide the proof into several steps.

\textbf{Step 1.}
For the rest of the proof, $0<T\leq1$ is fixed. Combining the density bounds and the fixed K\"ahler class
with \eqref{eq:Sobolev-estimate}, we obtain, for any fixed
$1<\nu<n/(n-1)$,
\[
 \left(\int_M|b|^{2\nu}\,d\mu_{\psi(t)}\right)^{1/\nu}
 \leq C_\nu\left(
       \int_M|\nabla b|_\psi^2\,d\mu_{\psi(t)}
       +\int_M|b|^2\,d\mu_{\psi(t)}\right).
\]
The constant is uniform in $t$ and independent of the initial trace.
 We define
\[
 \kappa=2-\frac1\nu>1,\qquad
 \sigma=\sum_{j=0}^{\infty}\kappa^{-j}
        =\frac{\kappa}{\kappa-1}.
\]
On $[T/2,T]$, the following bound will be used later.
\[
 1+t^{\alpha/2-1}\leq\frac CT.
\]

\textbf{Step 2.}
We take $T/2\leq a<b<T$ and a nondecreasing Lipschitz cutoff
$\chi:[a,T]\to[0,1]$ such that
\[
 \chi(a)=0,\qquad \chi=1\text{ on }[b,T],\qquad
 0\leq\chi'\leq\frac{2}{b-a}\quad\text{almost everywhere}.
\]
After multiplying \eqref{eq:weighted-energy} by $\chi$, the ordinary
product rule for $\chi Y_p$ gives
\[
 (\chi Y_p)'
 +cp^2\chi\int_Mw^{p-2}|\nabla w|_\psi^2\,d\mu_\psi
 \leq\left(\chi'+\frac{Cp}{T}\chi\right)Y_p.
\]
Integrating from $a$ to any $s\in[b,T]$, we obtain
\[
\begin{aligned}
 Y_p(s)
 &+cp^2\int_b^s\int_Mw^{p-2}|\nabla w|_\psi^2\,d\mu_\psi\,dt
 \leq\left(\frac{2}{b-a}+\frac{Cp}{T}\right)
                       \int_a^TY_p(t)\,dt.
\end{aligned}
\]
The bound for $\sup_{[b,T]}Y_p$ follows by discarding the gradient term. The full gradient integral is bounded by taking $s=T$
and discarding $Y_p(T)$. Using
$1\leq h\leq e^{B_1^2}$, adding the two bounds, and using
\[
 |\nabla(w^{p/2})|_\psi^2
       =\frac{p^2}{4}w^{p-2}|\nabla w|_\psi^2,
\]
we obtain, after enlarging the coefficient to $Cp^2$,
\begin{equation}
\begin{aligned}
 &\sup_{b\leq t\leq T}\int_Mw^p\,d\mu_\psi
       +\int_b^T\int_M|\nabla(w^{p/2})|_\psi^2\,d\mu_\psi\,dt
\leq
 Cp^2\left(\frac1{b-a}+\frac1T\right)
                 \int_a^T\int_Mw^p\,d\mu_\psi\,dt.
\end{aligned}
 \label{eq:parabolic-cutoff}
\end{equation}
The constant $C$ in this inequality is independent of $p\geq2$.

\textbf{Step 3.}
On each time slice, we write $w^{p\kappa}=w^{p(\kappa-1)}w^p$.
H\"older's inequality with conjugate exponents
$\nu/(\nu-1)$ and $\nu$ gives
\[
\begin{aligned}
 \int_Mw^{p\kappa}\,d\mu_\psi
 &\leq
 \left(\int_Mw^{p(\kappa-1)\nu/(\nu-1)}\,d\mu_\psi
       \right)^{(\nu-1)/\nu}
 \left(\int_Mw^{p\nu}\,d\mu_\psi\right)^{1/\nu}\\
 &=\left(\int_Mw^p\,d\mu_\psi\right)^{\kappa-1}
 \left(\int_Mw^{p\nu}\,d\mu_\psi\right)^{1/\nu},
\end{aligned}
\]
since $\kappa-1=(\nu-1)/\nu$. Applying the spatial Sobolev inequality
to $w^{p/2}$ and integrating in time, we obtain
\[
\begin{aligned}
 \int_b^T\int_Mw^{p\kappa}\,d\mu_\psi\,dt
 &\leq C_\nu
 \left(\sup_{b\leq t\leq T}\int_Mw^p\,d\mu_\psi\right)^{\kappa-1}
\times\left(
 \int_b^T\int_M|\nabla(w^{p/2})|_\psi^2\,d\mu_\psi\,dt
            +\int_b^T\int_Mw^p\,d\mu_\psi\,dt\right).
\end{aligned}
\]
The cutoff estimate \eqref{eq:parabolic-cutoff} controls the supremum and gradient integral.
It also absorbs the last zeroth-order integral after increasing $C$,
because $p\geq2$, $T\leq1$, and
$Cp^2((b-a)^{-1}+T^{-1})\geq1$. Therefore the following estimate holds
\[
 \int_b^T\int_Mw^{p\kappa}\,d\mu_\psi\,dt
 \leq
 \left[Cp^2\left(\frac1{b-a}+\frac1T\right)
       \int_a^T\int_Mw^p\,d\mu_\psi\,dt\right]^\kappa.
\]
Taking the $1/(p\kappa)$ power, we obtain
\begin{equation}
 \|w\|_{L^{p\kappa}(M\times[b,T])}
 \leq
 \left[Cp^2\left(\frac1{b-a}+\frac1T\right)\right]^{1/p}
 \|w\|_{L^p(M\times[a,T])}.
 \label{eq:parabolic-Moser-step}
\end{equation}
The constant in \eqref{eq:parabolic-Moser-step} is independent
of $p$, $a$, $b$, and $T$, once $\nu$ is fixed.

\textbf{Step 4.}
We choose a finite $p_0\geq2$ and set
\[
 p_j=p_0\kappa^j,\qquad
 a_j=T\left(\frac34-2^{-j-2}\right),\qquad j\geq0.
\]
Then $a_0=T/2$, $a_j\uparrow3T/4$, and
\[
 a_{j+1}-a_j=T2^{-j-3}.
\]
Applying \eqref{eq:parabolic-Moser-step} with
$p=p_j$, $a=a_j$, and $b=a_{j+1}$, we obtain
\[
 \|w\|_{L^{p_{j+1}}(M\times[a_{j+1},T])}
 \leq\left(\frac{C p_j^2 2^{j+4}}{T}\right)^{1/p_j}
        \|w\|_{L^{p_j}(M\times[a_j,T])}.
\]
The iteration produces a product of the displayed factors. Their
logarithms are summable, because
\[
 \sum_{j=0}^{\infty}\frac1{p_j}=\frac{\sigma}{p_0},\qquad
 \sum_{j=0}^{\infty}\frac{j}{p_j}
       =\frac{\kappa}{p_0(\kappa-1)^2}<\infty,
 \qquad \log p_j=\log p_0+j\log\kappa.
\]
In particular, the products satisfy
\[
 \prod_{j=0}^{\infty}
       (C p_j^2 2^{j+4})^{1/p_j}\leq C_{p_0,\nu},
 \qquad
 \prod_{j=0}^{\infty}T^{-1/p_j}=T^{-\sigma/p_0}.
\]
Restricting the norms to the fixed cylinder $M\times[3T/4,T]$ and
letting $j\to\infty$, we obtain
\[
 \sup_{M\times[3T/4,T]}w
 \leq C_{p_0,\nu}T^{-\sigma/p_0}
       \left(\int_{T/2}^T\int_Mw^{p_0}\,d\mu_\psi\,dt\right)^{1/p_0}.
\]

\textbf{Step 5.}
Lemma~\ref{lem:parabolic-Lp} provides the bound at the fixed
exponent $p_0$. Using the comparability of $w$ and $v$ and of the measures, we obtain
\[
\begin{aligned}
 \int_{T/2}^T\int_Mw^{p_0}\,d\mu_\psi\,dt
 &\leq C_{p_0,\beta}\int_{T/2}^Tt^{-p_0d_\beta}\,dt
 \leq C_{p_0,\beta}T^{1-p_0d_\beta}.
\end{aligned}
\]
For the last inequality, it suffices to bound the integrand by
$(T/2)^{-p_0d_\beta}$ and multiply by the interval length $T/2$.
Hence the bound for $\|w(T)\|_\infty$ is
\[
\begin{aligned}
 \|w(T)\|_\infty
 &\leq C_{p_0,\nu,\beta}
          T^{-\sigma/p_0}T^{(1-p_0d_\beta)/p_0}
 =C_{p_0,\nu,\beta}T^{-d_\beta-(\sigma-1)/p_0}.
\end{aligned}
\]
For a prescribed $\delta>0$, we choose
\[
 p_0>\max\left\{2,\frac{\sigma-1}{\delta}\right\}.
\]
Since $T\leq1$, this proves
$\|w(T)\|_\infty\leq C_{\beta,\delta}T^{-d_\beta-\delta}$.
Since $v=e^{A\psi}w\leq Cw$, the estimate
\eqref{eq:parabolic-trace} follows.
\end{proof}

The final proposition turns the trace estimate into an improvement of
the H\"older exponent, uniformly on the entire time interval. The new
initial data are taken from the original flow at a small positive time,
but each new flow is solved with the original forcing starting at time
zero.

\begin{prop}
\label{prop:bootstrap}
If \eqref{eq:flow-Holder-assumption} holds for some
$0<\beta<1$, then, for every
\begin{equation}
 0<\beta'<\mathcal B_\alpha(\beta),\qquad
 \mathcal B_\alpha(\beta)
     =\frac{\alpha+(4+\alpha)\beta}{4+\alpha+\alpha\beta},
 \label{eq:bootstrap-map}
\end{equation}
we have
\[
 \sup_{0\leq t\leq1}\|\psi(t)\|_{C^{1,\beta'}}
       \leq C_{\beta,\beta'}.
\]
The constant may also depend on $B_\beta$, but not on the initial trace
or on higher derivatives of $F$.
\end{prop}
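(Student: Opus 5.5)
The plan is to repeat the approximation argument of Lemma~\ref{lem:flow-seed}, but with new approximating initial data taken from the flow itself. For $0<\tau\leq1/4$ we set $z_\tau=\psi(\tau)=\mathcal T_\tau^F\varphi$ and evolve it with the \emph{original} forcing starting at time zero, that is, $\psi_\tau(t)=\mathcal T_t^Fz_\tau$ for $0\leq t\leq1$. The scale parameter $\tau$ plays the role of $\varepsilon$ in Lemma~\ref{lem:interpolation}, and $t$ is carried along as an additional parameter.

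\textbf{Step 1: the $C^0$ approximation error.} By \eqref{eq:flow-contraction} applied to $z_1=z_\tau$ and $z_2=\varphi$, together with \eqref{eq:flow-approximation}, we get
\[
 \sup_{0\leq t\leq1}\|\psi_\tau(t)-\psi(t)\|_\infty\leq\|\psi(\tau)-\varphi\|_\infty\leq C\tau^{1+\alpha/2}.
\]
This gives $a=1+\alpha/2$.

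\textbf{Step 2: the blow-up rate of $C^{1,\gamma}$ norms.} By Lemma~\ref{lem:flow-basic}, $z_\tau$ satisfies $\|z_\tau\|_\infty+\|\log f_{z_\tau}\|_\infty\leq C_B$ with a constant independent of $\tau$, since $H(\tau)$ is bounded. Hence Lemmas~\ref{lem:flow-basic} and~\ref{lem:trace-propagation} apply to the new flows with uniform constants. Proposition~\ref{prop:parabolic-trace} gives $\|n+\Delta_gz_\tau\|_\infty\leq C_{\beta,\delta}\tau^{-d_\beta-\delta}$, independent of the initial trace of $\varphi$. Then \eqref{eq:trace-propagation} propagates this bound, so that
\[
 \sup_{0\leq t\leq1}\|n+\Delta_g\psi_\tau(t)\|_\infty\leq C\tau^{-d_\beta-\delta}.
\]
On each time slice, the background elliptic estimate \eqref{eq:background-elliptic}, combined with the uniform $C^0$ bound, then yields $\sup_t\|\psi_\tau(t)\|_{C^{1,\gamma}}\leq C_{\gamma,\delta}\tau^{-b}$ with $b=d_\beta+\delta$, for every $\gamma<1$.

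\textbf{Step 3: interpolation.} We apply Lemma~\ref{lem:interpolation} with $\varepsilon=\tau$ and $t$ as a parameter, using dyadic scales $\tau_j=2^{-j}/4$, exactly as in the proof of Lemma~\ref{lem:flow-seed}. The $C^0$ limit of the partial sums is identified with $\psi(t)$ by Step~1. Since Steps~1 and~2 hold for every $\gamma<1$ and every $\delta>0$, every exponent
\[
 \beta'<\frac{a-d_\beta}{a+d_\beta}
\]
is admissible. With $d_\beta=(1-\beta)/(1+\beta)$, a direct computation gives
\[
 \frac{(2+\alpha)(1+\beta)-2(1-\beta)}{(2+\alpha)(1+\beta)+2(1-\beta)}=\frac{\alpha+(4+\alpha)\beta}{4+\alpha+\alpha\beta}=\mathcal B_\alpha(\beta).
\]
Given $\beta'<\mathcal B_\alpha(\beta)$, we choose $\delta$ small and $\gamma$ close to $1$ so that \eqref{eq:interpolation-range} holds. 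The bound at the fixed scale $\tau_0=1/4$ is a uniform $C^{1,\gamma}$ bound, so the resulting estimate holds on the closed interval $[0,1]$.

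The step I expect to require the most care is verifying that all constants are independent of $\tau$ and of the initial trace. The constant $C_B$ in \eqref{eq:trace-propagation} must depend only on the $C^0$ and density bounds of $z_\tau$, and not on its trace beyond the displayed linear factor. Likewise, Proposition~\ref{prop:parabolic-trace} must be applied only to the original flow $\psi$, which satisfies \eqref{eq:flow-Holder-assumption}. Smoothness of $z_\tau$ is available because $F$ and $\varphi$ are smooth in this subsection.
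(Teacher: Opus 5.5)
Your proposal is correct and follows the paper's proof essentially step for step. The shared ingredients are:
\begin{itemize}
\item restarting from $z_\tau=\psi(\tau)$ with the original forcing $P_tF$ and using \eqref{eq:flow-contraction}, which gives the $O(\tau^{1+\alpha/2})$ error;
\item Proposition~\ref{prop:parabolic-trace} at time $\tau$, propagated by \eqref{eq:trace-propagation} and converted by \eqref{eq:background-elliptic}, which gives $C^{1,\gamma}$ growth $\tau^{-d}$ with $d>d_\beta$;
\item dyadic interpolation in $\tau$, with $t$ as a parameter, which reaches the threshold $(a-d_\beta)/(a+d_\beta)=\mathcal B_\alpha(\beta)$.
\end{itemize}
The paper differs only cosmetically: it writes $d=d_\beta+\delta$ and gives explicit admissible choices of $d$ and $\gamma$.
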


\begin{proof} The proof has several steps.

\textbf{Step 1.}
We set
\[
 a=1+\frac\alpha2,\qquad d_\beta=\frac{1-\beta}{1+\beta},\qquad
 z_\tau=\psi(\tau),\qquad 0<\tau\leq\frac14.
\]
Since the original flow starts at the elliptic solution $\varphi$,
the estimate \eqref{eq:flow-approximation} gives
\[
 \|z_\tau-\varphi\|_\infty\leq C\tau^a.
\]
For every fixed $d>d_\beta$, Proposition~\ref{prop:parabolic-trace}
with $\delta=d-d_\beta$ gives
\[
 \|n+\Delta_gz_\tau\|_\infty\leq C_d\tau^{-d}.
\]
The basic estimates also give, independently of $\tau$,
\[
 \|z_\tau\|_\infty+\|\log f_{z_\tau}\|_\infty
    =\|\psi(\tau)\|_\infty+\|H(\tau)\|_\infty\leq C.
\]

\textbf{Step 2.}
For each $\tau$, we define
\[
 \zeta_\tau(t)=\mathcal T_t^Fz_\tau,
 \qquad
 \begin{cases}
 \displaystyle
 \partial_t\zeta_\tau
     =\log\frac{\omega_{\zeta_\tau(t)}^n}{\omega_g^n}-P_tF,\\[2mm]
 \zeta_\tau(0)=z_\tau.
 \end{cases}
\]
Lemma~\ref{lem:flow-basic} gives a smooth solution on $[0,1]$ and a
uniform $C^0$ bound. The forcing here is the same
$P_tF$ as for $\psi(t)$. On its interval of definition, the shifted flow
$\psi(t+\tau)$ instead satisfies an equation with forcing
$P_{t+\tau}F$ and cannot be substituted for $\zeta_\tau$.
Applying Lemma~\ref{lem:flow-basic} to $\zeta_\tau$ and $\psi$, we obtain
\[
 \sup_{0\leq t\leq1}\|\zeta_\tau(t)-\psi(t)\|_\infty
 \leq\|z_\tau-\varphi\|_\infty\leq C\tau^a.
\]
Lemma~\ref{lem:trace-propagation} gives, with a constant independent of $\tau$,
\[
\begin{aligned}
 \sup_{0\leq t\leq1}\|n+\Delta_g\zeta_\tau(t)\|_\infty
 &\leq C\bigl(1+\|n+\Delta_gz_\tau\|_\infty\bigr)
 \leq C_d\tau^{-d}.
\end{aligned}
\]
Here the additive constant is absorbed using $\tau^{-d}\geq1$.

\textbf{Step 3.}
We fix $0<\gamma<1$ and choose a finite
$p>2n/(1-\gamma)$. It follows from the fixed-background estimate
\eqref{eq:background-elliptic} and Sobolev embedding that
\[
 \|\zeta_\tau(t)\|_{C^{1,\gamma}}
 \leq C_\gamma\bigl(
       \|\Delta_g\zeta_\tau(t)\|_\infty
                  +\|\zeta_\tau(t)\|_\infty\bigr).
\]
Since $\Delta_g\zeta_\tau=(n+\Delta_g\zeta_\tau)-n$, Step 2 gives
\[
 \sup_{0\leq t\leq1}\|\zeta_\tau(t)\|_{C^{1,\gamma}}
      \leq C_{\gamma,d}\tau^{-d}.
\]
Thus the approximation error is $O(\tau^a)$, while the stronger norm
of the approximants grows at most as $\tau^{-d}$, uniformly in $t$.

\textbf{Step 4.}
The threshold is obtained by letting $d\downarrow d_\beta$ and
$\gamma\uparrow1$, and it equals
\[
\begin{aligned}
 \frac{a-d_\beta}{a+d_\beta}
 &=\frac{(2+\alpha)(1+\beta)-2(1-\beta)}
          {(2+\alpha)(1+\beta)+2(1-\beta)}
 =\frac{\alpha+(4+\alpha)\beta}{4+\alpha+\alpha\beta}
  =\mathcal B_\alpha(\beta).
\end{aligned}
\]
We fix $0<\beta'<\mathcal B_\alpha(\beta)$. Equivalently,
\[
 a(1-\beta')>d_\beta(1+\beta').
\]
An explicit admissible choice is
\[
 d=\frac12\left(d_\beta+\frac{a(1-\beta')}{1+\beta'}\right),
 \qquad
 \gamma=\frac12\left(1+\beta'+\frac{d(1+\beta')}{a}\right).
\]
These satisfy
\[
 d_\beta<d<\frac{a(1-\beta')}{1+\beta'},
 \qquad
 \beta'+\frac{d(1+\beta')}{a}<\gamma<1.
\]
In particular $\beta'<\gamma<1$, and
\[
 a(\gamma-\beta')-d(1+\beta')
    =\frac{a(1-\beta')-d(1+\beta')}{2}>0.
\]
The parameters depend only on $\alpha,\beta,\beta'$.

\textbf{Step 5.}
For completeness, we take $\tau_j=2^{-j}/4$ and write
$
 d_j(t)=\zeta_{\tau_{j+1}}(t)-\zeta_{\tau_j}(t).
$
Steps 2--3 imply
\[
 \sup_t\|d_j(t)\|_\infty\leq C\tau_j^a,
 \qquad
 \sup_t\|d_j(t)\|_{C^{1,\gamma}}
                      \leq C_{\gamma,d}\tau_j^{-d}.
\]
The H\"older interpolation inequality from
Lemma~\ref{lem:interpolation} gives
\[
\begin{aligned}
 \sup_t\|d_j(t)\|_{C^{1,\beta'}}
 &\leq C
   \left(\sup_t\|d_j(t)\|_\infty\right)^{(\gamma-\beta')/(1+\gamma)}
   \left(\sup_t\|d_j(t)\|_{C^{1,\gamma}}\right)^{(1+\beta')/(1+\gamma)}\\
 &\leq C\tau_j^{[a(\gamma-\beta')-d(1+\beta')]/(1+\gamma)}.
\end{aligned}
\]
The exponent is positive by Step 4, so the geometric bounds are
summable. Step 3 gives a uniform $C^{1,\beta'}$ bound for
$\zeta_{\tau_0}$. Combining these facts, we obtain convergence of the series
\[
 \zeta_{\tau_0}(t)+\sum_{j=0}^{\infty}d_j(t)
\]
in the spatial $C^{1,\beta'}$ norm, uniformly for
$0\leq t\leq1$. Step 2 identifies its sum with $\psi(t)$.
This proves the asserted bound, including both time endpoints.

Finally, the bootstrap map satisfies
\[
 \mathcal B_\alpha(\beta)-\beta
 =\frac{\alpha(1-\beta^2)}{4+\alpha+\alpha\beta}>0,
 \qquad 0<\beta<1.
\]
We may therefore choose $\beta'$ strictly between $\beta$ and
$\mathcal B_\alpha(\beta)$. Since the improved estimate holds on the
whole interval, it supplies the hypothesis for the next application of
Proposition~\ref{prop:parabolic-trace}.
\end{proof}

\section{Proof of Theorem~\ref{thm:main}}
\label{sec:conclusion}

We first establish the uniform estimate for smooth data. The exponent
improvement in Proposition~\ref{prop:bootstrap} is iterated only a finite
number of times, until the strict hypothesis in \eqref{eq:LLZ} is met.
We then apply the local theorem of Li--Li--Zhang and pass to general
$C^\alpha$ data by smoothing the density.

\begin{proof}[Proof of Theorem~\ref{thm:main} for smooth data] The proof has several steps.

\textbf{Step 1: }
We assume that $F$ is smooth and satisfies \eqref{eq:data}, and let
$\varphi$ be the smooth solution of \eqref{eq:MA} with zero
$d\mu_g$ mean. The auxiliary path is given by
\[
 \psi(t)=\mathcal T_t^F\varphi,\qquad 0\leq t\leq1.
\]
Lemma~\ref{lem:flow-basic} gives existence on this interval, and
Lemma~\ref{lem:flow-seed} gives
\[
 \sup_{0\leq t\leq1}\|\psi(t)\|_{C^{1,\beta_0}}\leq C_0,
 \qquad \beta_0=\frac\alpha2.
\]
Here $C_0$ depends only on
$M,g,n,\alpha,K$.
For a fixed exponent $0<\beta<1$, Proposition~\ref{prop:bootstrap}
states that a uniform $C^{1,\beta}$ bound for $\psi$ on $[0,1]$
implies a uniform $C^{1,\beta'}$ bound on the same interval whenever
\[
 0<\beta'<\mathcal B_\alpha(\beta),\qquad
 \mathcal B_\alpha(\beta)
 =\frac{\alpha+(4+\alpha)\beta}{4+\alpha+\alpha\beta}.
\]
The difference between this upper threshold and $\beta$ is
\begin{equation}
 \mathcal B_\alpha(\beta)-\beta
 =\frac{\alpha(1-\beta^2)}{4+\alpha+\alpha\beta}>0,
 \qquad 0\leq\beta<1.
 \label{eq:strict-improvement}
\end{equation}
Moreover,
\[
 1-\mathcal B_\alpha(\beta)
 =\frac{4(1-\beta)}{4+\alpha+\alpha\beta}>0.
\]
Thus there are admissible improved exponents strictly between $\beta$
and $1$.

\textbf{Step 2:}
For $0\leq\beta<1$, we set $d=(1-\beta)/(1+\beta)$. Direct computation gives
\[
 \begin{aligned}
 1+\mathcal B_\alpha(\beta)
 &=\frac{(4+2\alpha)(1+\beta)}{4+\alpha+\alpha\beta},\quad  \frac{1-\mathcal B_\alpha(\beta)}
      {1+\mathcal B_\alpha(\beta)}
 =\frac{d}{1+\alpha/2}.
 \end{aligned}
\]
This transforms the nonlinear improvement map into division by the
constant $1+\alpha/2$.
We choose
\[
 k=1+\frac\alpha4,\qquad
 d_j=\frac{2-\alpha}{2+\alpha}\,k^{-j},\qquad
 \beta_j=\frac{1-d_j}{1+d_j},\qquad j=0,1,2,\ldots.
\]
Then $d_0=(1-\alpha/2)/(1+\alpha/2)$, so this definition agrees with
$\beta_0=\alpha/2$. Since $1<k<1+\alpha/2$, we have
\[
 \frac{d_j}{1+\alpha/2}<d_{j+1}=\frac{d_j}{k}<d_j.
\]
Since the derivative of $d\mapsto(1-d)/(1+d)$ is
$-2/(1+d)^2$, this map is strictly decreasing. It follows that
\[
 \beta_j<\beta_{j+1}<\mathcal B_\alpha(\beta_j)<1.
\]
In particular, every successive exponent lies in the strict range
required by Proposition~\ref{prop:bootstrap}.
Induction now gives
\[
 \sup_{0\leq t\leq1}\|\psi(t)\|_{C^{1,\beta_j}}\leq C_j
 \quad\text{for each finite }j.
\]
At the inductive step, the constant is allowed to depend on $C_j$ and
on the fixed pair $\beta_j,\beta_{j+1}$. Since these exponents are
chosen in terms of $\alpha$ alone, each $C_j$ depends only on
$M,g,n,\alpha,K$ and the integer $j$. The interval remains $[0,1]$
at every step, so the estimate includes the original initial potential
$\psi(0)=\varphi$.

\textbf{Step 3:}
The local theorem recalled in \eqref{eq:LLZ} requires
\[
 \beta>\beta_*(n,\alpha)
 =1-\theta, \quad \theta=\frac{\alpha}{n(2+\alpha)-1}.
\]
For $d\geq0$,
\[
 \frac{1-d}{1+d}>1-\theta
 \quad\Longleftrightarrow\quad
 \theta>(2-\theta)d
 \quad\Longleftrightarrow\quad
 d<\frac{\theta}{2-\theta}.
\]
The last threshold is
\[
 \frac{\theta}{2-\theta}
 =\frac{\alpha}{2n(2+\alpha)-2-\alpha}
 =\frac{\alpha}{(2n-1)(2+\alpha)}.
\]
It therefore suffices to choose an integer $J$ for which
\[
 \frac{2-\alpha}{2+\alpha}\,k^{-J}
 <\frac{\alpha}{(2n-1)(2+\alpha)}.
\]
After multiplication by $2+\alpha$ and rearrangement, this condition is equivalent to
\[
 J>
 \frac{\log\bigl((2-\alpha)(2n-1)/\alpha\bigr)}
      {\log(1+\alpha/4)}.
\]
For example, we choose
\[
 J=1+\left\lfloor
 \frac{\log\bigl((2-\alpha)(2n-1)/\alpha\bigr)}
      {\log(1+\alpha/4)}
 \right\rfloor
\]
so that the required strict inequality holds even when the quotient happens
to be an integer. The numerator is positive for $n\geq2$ and
$0<\alpha<1$, and the denominator is positive. Thus $J$ is a finite
positive integer depending only on $n,\alpha$.
With this choice,
\[
 \beta_J-\beta_*(n,\alpha)
 =\frac{\theta-(2-\theta)d_J}{1+d_J}>0.
\]
In particular, it follows from the choice of $J$ that $\beta_J$ and its positive gap above the local
threshold depend only on $n,\alpha$. At $t=0$, the inductive estimate
gives
\[
 \|\varphi\|_{C^{1,\beta_J}(M,g)}\leq C_J.
\]
The argument uses only finitely many estimates and does not require a limit as $j\to\infty$.

\textbf{Step 4:}
We choose finitely many holomorphic coordinate domains whose images
$\Omega_\ell\subset\mathbb C^n$ are balls, with smaller balls
$\Omega'_\ell\Subset\Omega_\ell$ whose inverse images still cover $M$.
Each larger domain is chosen with compact closure in a slightly larger
chart. These choices give fixed bounds for the coordinate metrics, coordinate changes,
and all background quantities used below.

On one such larger chart, we choose a fixed smooth real function $\rho$
with $\rho_{i\bar j}=g_{i\bar j}$ and set $\Phi=\rho+\varphi$.
All functions in the following local formulas are written in these
coordinates. We have
\[
 (\Phi_{i\bar j})=(g_{i\bar j}+\varphi_{i\bar j})>0,
 \qquad
 \det(\Phi_{i\bar j})=a,
 \qquad a=e^F\det(g_{i\bar j}).
\]
Thus $\Phi$ is plurisubharmonic and solves the local complex
Monge--Amp\`ere equation.

The fixed background determinant has a positive minimum and a finite
$C^\alpha$ norm on each larger chart. Since $\|F\|_\infty\leq K$,
\[
 a\geq e^{-K}\inf_{\Omega_\ell}\det(g_{i\bar j})>0.
\]
The mean value theorem for the exponential gives
\[
 |e^{F(x)}-e^{F(y)}|
 \leq e^K|F(x)-F(y)|.
\]
The global $C^\alpha$ bound for $F$ controls its coordinate
$C^\alpha$ norms, with constants depending only on the fixed atlas.
The product inequality for H\"older seminorms gives
\[
 \begin{aligned}
 [a]_{C^\alpha(\Omega_\ell)}
 &\leq
 \|\det(g_{i\bar j})\|_\infty[e^F]_{C^\alpha(\Omega_\ell)}
 +\|e^F\|_\infty[\det(g_{i\bar j})]_{C^\alpha(\Omega_\ell)},\\
 \|a\|_{C^\alpha(\Omega_\ell)}
 &\leq C(M,g,\alpha)e^K(1+K).
 \end{aligned}
\]
Combining the estimate from Step~3 with the fixed smoothness of $\rho$, we obtain
\[
 \|\Phi\|_{C^{1,\beta_J}(\Omega_\ell)}
 \leq C\bigl(1+\|\varphi\|_{C^{1,\beta_J}(M,g)}\bigr)
 \leq C(1+C_J).
\]
Finally, $\beta_J>\beta_*(n,\alpha)$ by Step~3.
All the hypotheses of \cite[Theorem~1.2]{LLZ} are therefore satisfied
with constants independent of higher derivatives of $F$. The theorem
gives
\[
 \|\Phi\|_{C^{2,\alpha}(\Omega'_\ell)}\leq C.
\]
The constant in this estimate depends on $n,\alpha,\beta_J$, the positive lower bound
for $a$, the two H\"older norms just estimated, and
$\operatorname{dist}(\Omega'_\ell,\partial\Omega_\ell)$.
Every item in this list is controlled by $M,g,n,\alpha,K$.
The conclusion controls all second real coordinate derivatives, not
only the mixed complex derivatives.

\textbf{Step 5:}
The same type of $C^{2,\alpha}$ estimate holds for $\varphi$ on
each smaller chart after subtraction of the fixed local potential $\rho$.
Using a finite partition of unity subordinate to these charts and
the fixed smooth coordinate changes, we compare the resulting
coordinate norms with the background $C^{2,\alpha}(M,g)$ norm. This comparison yields
\[
 \|\varphi\|_{C^{2,\alpha}(M,g)}
 \leq C(M,g,n,\alpha,K),
\]
which is \eqref{eq:main-estimate} for smooth data.

The bound depends only on the original data. At every induction
step, the proof uses only the previous uniform $C^{1,\beta_j}$ bound and exponents
fixed by $\alpha$. The number $J$ is fixed by $n,\alpha$.
Consequently, the final local estimates involve only finitely many
constants determined by the stated data. The higher-derivative bounds
used to continue each fixed smooth auxiliary flow are not needed for
this quantitative induction.
\end{proof}

\begin{proof}[Completion of the proof of Theorem~\ref{thm:main}]
We now let $F\in C^\alpha(M)$ satisfy \eqref{eq:data}, and let
$\varphi$ be the given bounded pluripotential solution with zero
$d\mu_g$ mean.

\textbf{Step 1:}
For $0<\varepsilon\leq1/4$, the approximants are defined by
\[
 f_\varepsilon=P_{\varepsilon^2}(e^F),\qquad
 F_\varepsilon=\log f_\varepsilon.
\]
These are smooth functions. To see the positive lower and upper bounds
explicitly, we use the heat kernel relative to $d\mu_g$:
\[
 f_\varepsilon(x)
 =\int_M k_{\varepsilon^2}(x,y)e^{F(y)}\,d\mu_g(y).
\]
The kernel is nonnegative and its integral in $y$ is one. These properties imply
\[
 e^{-K}\leq f_\varepsilon\leq e^K,
 \qquad \|F_\varepsilon\|_\infty\leq K.
\]
Since the heat operator preserves the background integral, we also have
\[
 \int_Me^{F_\varepsilon}\,d\mu_g
 =\int_Mf_\varepsilon\,d\mu_g
 =\int_Me^F\,d\mu_g=1.
\]
Thus the smoothed equation has exactly the required compatibility
condition, without an additional normalizing factor.
Since
\[
 \|e^F\|_{C^\alpha(M,g)}\leq e^K(1+K),
\]
it follows from the fixed-background heat estimates in Section~\ref{Heat estimates and interpolation} that
\[
 \|f_\varepsilon\|_{C^\alpha}\leq C(M,g,\alpha)e^K(1+K),
 \qquad
 \|f_\varepsilon-e^F\|_\infty\leq C\varepsilon^\alpha.
\]
On the interval $[e^{-K},e^K]$, the derivative of $\log$ is at most
$e^K$. Consequently, we have
\[
 \begin{aligned}
 [F_\varepsilon]_{C^\alpha}
 \leq e^K[f_\varepsilon]_{C^\alpha},\quad
 \|F_\varepsilon-F\|_\infty
 \leq e^K\|f_\varepsilon-e^F\|_\infty
 \leq C\varepsilon^\alpha.
 \end{aligned}
\]
In particular, there is a common bound
\[
 \|F_\varepsilon\|_{C^\alpha}\leq K_*,
 \qquad K_*=C(M,g,\alpha,K),
\]
independent of $\varepsilon$. We may increase $K_*$ so that $K_*\geq K$.
The argument below needs only uniform convergence $F_\varepsilon\to F$
and this common $C^\alpha$ bound.

\textbf{Step 2: }
Yau's theorem \cite{Yau} provides a smooth solution
$\varphi_\varepsilon$ with the normalization
\[
 (\omega_g+\sqrt{-1}\partial\bar\partial\varphi_\varepsilon)^n
 =f_\varepsilon\omega_g^n,
 \qquad \omega_{\varphi_\varepsilon}>0,
 \qquad \int_M\varphi_\varepsilon\,d\mu_g=0.
\]
Applying the smooth estimate just proved with $K_*$ in place of $K$,
we obtain
\[
 \|\varphi_\varepsilon\|_{C^{2,\alpha}(M,g)}
 \leq C(M,g,n,\alpha,K_*)
 \leq C(M,g,n,\alpha,K).
\]
In the last expression, the dependence of $K_*$ on the original data
has been absorbed into the constant.
It follows from Proposition~\ref{prop:density-approximation}, established
before the auxiliary-flow argument, that
\[
 \|\varphi_\varepsilon-\varphi\|_\infty
 \leq C\varepsilon^{1+\alpha}(1+|\log\varepsilon|)
 \longrightarrow0.
\]
This identifies the uniform limit with the prescribed bounded weak
solution, and in particular supplies a continuous representative of it.

\textbf{Step 3:}
We fix $0<\alpha'<\alpha$, for example $\alpha'=\alpha/2$, and take any
sequence $\varepsilon_j\downarrow0$. The uniform $C^{2,\alpha}$ estimate
allows us to apply compactness on the finite atlas and obtain a subsequence converging
in $C^{2,\alpha'}(M)$ to a limit. The uniform convergence in Step~2
forces this limit to be $\varphi$. In particular, the functions and all
of their first and second real coordinate derivatives converge uniformly
on the smaller coordinate charts.

For any fixed pair of distinct points $x,y$ in one such chart, the uniform estimate gives
\[
 |D^2\varphi_{\varepsilon_j}(x)
     -D^2\varphi_{\varepsilon_j}(y)|
 \leq C|x-y|^\alpha,
\]
where $D^2$ denotes the real coordinate Hessian and the constant is
independent of $j$. Passing to the limit for this pair, we obtain
\[
 |D^2\varphi(x)-D^2\varphi(y)|\leq C|x-y|^\alpha.
\]
The same subsequence works for all points because the derivatives
converge uniformly. Taking the supremum over pairs $x\ne y$,
we obtain the $\alpha$-H\"older bound for $D^2\varphi$ on the chart.
The lower-order bounds also pass to the limit. Combining the finitely
many chart estimates, we obtain
\[
 \varphi\in C^{2,\alpha}(M),\qquad
 \|\varphi\|_{C^{2,\alpha}(M,g)}\leq C(M,g,n,\alpha,K).
\]
Thus compactness was used at the smaller exponent $\alpha'$, while
the original exponent $\alpha$ is retained by passing to the limit
in the uniform H\"older inequalities.

\textbf{Step 4:}
The uniform $C^{2,\alpha}$ estimate implies a uniform trace bound
\[
 v_{\varphi_\varepsilon}
 =\operatorname{tr}_g g_{\varphi_\varepsilon}
 =n+\Delta_g\varphi_\varepsilon\leq L,
\]
where we may take $L\geq n$ depending only on $M,g,n,\alpha,K$.
At any point, we denote the eigenvalues of $g_{\varphi_\varepsilon}$
relative to $g$ by $\lambda_1,\ldots,\lambda_n>0$. These eigenvalues satisfy
\[
 \lambda_i\leq L,\qquad
 \prod_{j=1}^n\lambda_j=f_\varepsilon\geq e^{-K}.
\]
For each $i$ this gives
\[
 \lambda_i
 =\frac{f_\varepsilon}{\prod_{j\ne i}\lambda_j}
 \geq\frac{e^{-K}}{L^{n-1}}.
\]
Consequently, the metrics satisfy
\[
 \frac{e^{-K}}{L^{n-1}}\,\omega_g
 \leq\omega_{\varphi_\varepsilon}\leq L\omega_g.
\]
By the $C^2$ convergence in Step~3, the limit
satisfies
\[
 \omega_\varphi\geq\frac{e^{-K}}{L^{n-1}}\,\omega_g>0.
\]
Since the determinant is continuous in the matrix entries and
$f_\varepsilon\to e^F$ uniformly, the smooth equations also pass
to the limit pointwise. The mean-zero condition passes by uniform
convergence. Thus the solution already identified with $\varphi$
satisfies the equation classically and remains strictly K\"ahler.

\textbf{Step 5:}
For a bounded solution originally normalized by $\sup_M\varphi=0$,
we set
\[
 \widetilde\varphi
 =\varphi-\int_M\varphi\,d\mu_g.
\]
Then $\widetilde\varphi$ has zero background mean and solves the same
equation. The result just proved gives
$\|\widetilde\varphi\|_{C^{2,\alpha}}\leq C$.
Since
\[
 \sup_M\widetilde\varphi=-\int_M\varphi\,d\mu_g,
 \qquad
 \varphi=\widetilde\varphi-\sup_M\widetilde\varphi,
\]
we obtain
\[
 \|\varphi\|_\infty\leq2\|\widetilde\varphi\|_\infty.
\]
All derivatives and their H\"older seminorms are unchanged by this
constant shift. The asserted $C^{2,\alpha}$ estimate therefore holds
with the supremum normalization as well, after increasing its constant.
This completes the proof.
\end{proof}

\section{Local $C^{2,\alpha}$ estimates}
\label{sec:local-obstructions}

In this section, we study local estimates on a coordinate ball and compare them with the global estimate. Here $B_r=\{z\in\mathbb C^n:|z|<r\}$, and $g$ is a fixed smooth K\"ahler metric on a neighborhood of $\overline{B_1}$. For a bounded $\omega_g$-plurisubharmonic potential, we assume
\beq
 \omega_\varphi^n=e^F\omega_g^n\quad\hbox{in }B_1,\qquad
 \|F\|_{C^\alpha(B_1)}\leq K,\qquad
 \|\varphi\|_{L^\infty(B_1)}\leq A.
 \label{eq:local-data}
\eeq
On this ball, the normalized background measure is $d\mu_g=V_{B_1}^{-1}\omega_g^n$, where $V_{B_1}=\int_{B_1}\omega_g^n$. No integral normalization is imposed on $e^F$ in the local problem. We retain the notation $\omega_\varphi=\omega_g+\pbp\varphi$ and $v_\varphi=n+\Delta_g\varphi$. Under \eqref{eq:local-data} alone, an estimate of the following form does not hold
\beq
 \|\varphi\|_{C^{2,\alpha}(B_{1/2})}
 \leq C(n,\alpha,g,K,A).
 \label{eq:false-estimate}
\eeq
This follows from the two-dimensional Pogorelov-type example discussed by B{\l}ocki \cite{Blo} and He \cite{He}. By adding flat factors, we obtain, in every complex dimension $n\geq2$, a bounded Lipschitz potential with density $1$ that is not differentiable at the origin. The following stronger statement is proved by smoothing the boundary values and solving the Dirichlet problem with the same density. By comparison, the solutions converge uniformly to the singular potential, so a uniform interior $C^2$ bound would give a contradiction.

\begin{prop}
\label{prop:smooth-sequence}
(cf. B{\l}ocki \cite{Blo} and He \cite{He}) For every $n\geq2$ and $0<\alpha<1$, there are smooth K\"ahler potentials $\varphi_j$ on $\overline{B_1}$ for the fixed flat metric such that
$$
 \omega_{\varphi_j}^n=\omega_g^n,\qquad
 \sup_j\|\varphi_j\|_{L^\infty(B_1)}<\infty,
$$
but
$$
 \|\varphi_j\|_{C^{2,\alpha}(B_{1/2})}\longrightarrow\infty.
$$
In fact, their $C^2$ norms on $B_{1/2}$ are unbounded.
\end{prop}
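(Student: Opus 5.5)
We begin with the Pogorelov-type singular solution in complex dimension two and then extend it by flat factors. In $\mathbb C^2$ with coordinates $z=(z_1,z_2)$, $z_1=x+\sqrt{-1}y$, we use the B{\l}ocki example
$$
 u(z)=(1+|z_2|^2)\,|x|^{2-2/n_0}\quad\hbox{(suitably modified)},
$$
or, more concretely, the function $u_0(z_1,z_2)=(1+|z_2|^2)|\mathrm{Re}\,z_1|$-type construction from \cite{Blo,He}. This produces a bounded, Lipschitz, plurisubharmonic $u_0$ on a neighborhood of $\overline{B_1}\subset\mathbb C^2$ with $(\pbp u_0)^2=c\,\omega_{\rm flat}^2$ in the pluripotential sense for a positive constant $c$, and $u_0$ fails to be $C^1$ (in particular fails to be $C^2$) along a complex curve through the origin. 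After rescaling, we may take $c=1$. For $n>2$, we set $u(z)=u_0(z_1,z_2)+\sum_{k\geq3}|z_k|^2$, so that $(\pbp u)^n=\omega_{\rm flat}^n$ and $u$ still fails to be $C^2$ at the origin. With $g$ flat and local potential $\rho=|z|^2$, the function $\varphi_\infty=u-|z|^2$ is a bounded $\omega_g$-psh solution of $\omega_{\varphi_\infty}^n=\omega_g^n$ on $B_1$.

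Next, we smooth the boundary data. We take smooth functions $h_j$ on $\partial B_1$ with $h_j\to u|_{\partial B_1}$ uniformly; mollification of the continuous function $u$ works. By Caffarelli--Kohn--Nirenberg--Spruck \cite{CKNS}, on the strictly pseudoconvex ball there are plurisubharmonic $u_j\in C^\infty(\overline{B_1})$ with $(\pbp u_j)^n=\omega_{\rm flat}^n$, $(\pbp u_j)>0$ and $u_j=h_j$ on $\partial B_1$. We then set $\varphi_j=u_j-|z|^2$. The comparison principle of Bedford--Taylor \cite{BT}, applied between $u_j$ and $u\pm\|h_j-u\|_{L^\infty(\partial B_1)}$ (all with the same density), gives
$$
 \|u_j-u\|_{L^\infty(B_1)}\leq\|h_j-u\|_{L^\infty(\partial B_1)}\to0 .
$$
This yields the uniform $L^\infty$ bound on $\varphi_j$ and their uniform convergence to $\varphi_\infty$.

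Finally, we argue by contradiction. Suppose $\|\varphi_j\|_{C^2(B_{1/2})}\leq C$ along a subsequence. Then the first derivatives of $\varphi_j$ are uniformly Lipschitz on $B_{1/2}$, and by Arzel\`a--Ascoli a further subsequence converges in $C^{1}$. Its limit must be $\varphi_\infty$, which would then be $C^{1,1}$ near the origin. This contradicts the non-differentiability of $u$ at $0$. Hence the $C^2$ norms, and a fortiori the $C^{2,\alpha}$ norms, on $B_{1/2}$ are unbounded. The main obstacle is the first step: writing down the singular solution explicitly and verifying, in the pluripotential sense across the singular set, that its Monge--Amp\`ere measure equals a constant multiple of Lebesgue measure with no singular part concentrated on the non-smooth locus. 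I would first check this by computing the measure on the smooth part and then using the continuity of the Monge--Amp\`ere operator under smooth approximations \cite{BT} to exclude mass on the singular locus, as done in \cite{Blo}.
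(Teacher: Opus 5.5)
Your argument after the first step is the same as the paper's: smooth the boundary data, solve the Dirichlet problem with density $1$ by \cite{CKNS}, trap the solutions by Bedford--Taylor comparison, and conclude with Arzel\`a--Ascoli. The paper uses $\Phi_{\varepsilon_j}|_{\partial B_1}$ as boundary data instead of a mollification, which gives the explicit bound $\Phi\leq\Psi_j\leq\Phi+4\sqrt{\varepsilon_j}$; your version works as well. The gap is the first step. It is the only step with real content, and the formulas you write down fail.

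For $u_0=(1+|z_2|^2)|\mathrm{Re}\,z_1|$, on $\{\mathrm{Re}\,z_1>0\}$ one has $u_{1\bar1}=0$, $u_{2\bar2}=\mathrm{Re}\,z_1$ and $u_{1\bar2}=z_2/2$. Hence $\det(u_{i\bar j})=-|z_2|^2/4<0$, so $u_0$ is not plurisubharmonic. Its singular set is also a real hypersurface, not a complex curve. Similarly, $(1+|z_2|^2)|\mathrm{Re}\,z_1|^s$ with $1<s<2$ has
\[
 \det(u_{i\bar j})=\tfrac s4|\mathrm{Re}\,z_1|^{2s-2}\bigl(s-1-|z_2|^2\bigr),
\]
which is neither constant nor nonnegative. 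The real Pogorelov example does not transfer through a real part: the modulus must be that of a complex variable.

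The correct example, which is the one the paper uses, is
\[
 \Phi(z)=2(1+|z_1|^2)|z_2|+\sum_{j\geq3}|z_j|^2 .
\]
Off $\{z_2=0\}$ one computes $\Phi_{1\bar1}=2|z_2|$, $\Phi_{2\bar2}=(1+|z_1|^2)/(2|z_2|)$ and $\Phi_{1\bar2}=\bar z_1z_2/|z_2|$, so the determinant is exactly $1$. To rule out mass on $\{z_2=0\}$, use
\[
 \Phi_\varepsilon=2(1+|z_1|^2)(|z_2|^2+\varepsilon)^{1/2}+\sum_{j\geq3}|z_j|^2 ,
\]
whose determinant equals $1+\varepsilon(1+2|z_1|^2)/(|z_2|^2+\varepsilon)$. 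This lies in $[1,4]$ on $B_1$ and tends to $1$ off $\{z_2=0\}$. Since $\Phi_\varepsilon\downarrow\Phi$ uniformly (indeed $0\leq\Phi_\varepsilon-\Phi\leq4\sqrt\varepsilon$), Bedford--Taylor continuity and dominated convergence give $(\pbp\Phi)^n=\omega_g^n$.

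Non-differentiability at the origin follows from $\Phi(0,h,0,\dots,0)=2|h|$ for real $h$. This is the property your contradiction actually needs; failure of $C^2$ alone would not contradict a $C^{1,1}$ limit. With $u=\Phi$ and $\varphi_\infty=\Phi-|z|^2$, the rest of your proof goes through unchanged.
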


We first construct the singular potential used in Proposition~\ref{prop:smooth-sequence}. The proof consists of computing the determinant away from the singular set and using regularization and the Bedford--Taylor continuity theorem to verify the equation across that set.

\begin{lem}
\label{lem:singular}
For every $n\geq2$, there is a locally Lipschitz $\omega_g$-plurisubharmonic potential on $B_1$, with $g$ flat, such that $\omega_\varphi^n=\omega_g^n$ in the pluripotential sense, while $\varphi$ is not differentiable at the origin.
\end{lem}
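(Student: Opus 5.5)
We will use the classical Pogorelov-type example in $\mathbb C^2$ and add flat factors. On $\mathbb C^2$ with coordinates $z=(z_1,z_2)$, set
$$
 u(z_1,z_2)=(1+|z_1|^2)\,|z_2|\cdot c
$$
for a constant $c>0$ fixed later. This is essentially the example of B{\l}ocki and He. Away from $\{z_2=0\}$, the function $|z_2|$ satisfies $\partial_2\bar\partial_2|z_2|=\frac14|z_2|^{-1}$, and $u$ is smooth there. A direct computation of the complex Hessian gives $u_{1\bar1}=c|z_2|$, $u_{2\bar2}=\tfrac c4(1+|z_1|^2)|z_2|^{-1}$, and $u_{1\bar2}=\tfrac c2 z_1\bar z_2/|z_2|$. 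Hence
$$
 \det(u_{i\bar j})=\tfrac{c^2}{4}\bigl(1+|z_1|^2-|z_1|^2\bigr)=\tfrac{c^2}{4}.
$$
The diagonal entries are positive, so the Hessian is positive definite off $\{z_2=0\}$. Choosing $c$ with $c^2/4$ equal to the flat determinant normalization makes $\det(u_{i\bar j})$ match $\omega_g^n$. For $n>2$ we take $U=u(z_1,z_2)+\sum_{k\geq3}|z_k|^2$, again normalizing constants. We then set $\varphi=U-\rho$, where $\rho=\sum|z_k|^2$ (suitably scaled) is the flat local potential. The function $U$ is psh as a sum of psh functions: $u$ is a limit of the smooth psh functions below, and $|z_k|^2$ is psh. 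It is locally Lipschitz on $B_1$, and $\varphi$ is not differentiable at $0$ because $|z_2|$ is not.

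The main step is to verify $(\sqrt{-1}\partial\bar\partial U)^n=\omega_g^n$ across the singular set $\{z_2=0\}$, which is pluripolar but where a priori the Monge--Amp\`ere measure could carry mass. I would regularize by $u_\epsilon=c(1+|z_1|^2)\sqrt{|z_2|^2+\epsilon^2}$. The same computation, using $\partial_2\bar\partial_2\sqrt{|z_2|^2+\epsilon^2}=\tfrac{|z_2|^2/2+\epsilon^2}{2(|z_2|^2+\epsilon^2)^{3/2}}$, gives an explicit smooth determinant $\det(u_{\epsilon,i\bar j})=\tfrac{c^2}{4}\,m_\epsilon(z)$. Here $m_\epsilon$ is a bounded positive function with $m_\epsilon\to1$ locally uniformly off $\{z_2=0\}$ and $0\le m_\epsilon\le C$ everywhere. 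I expect the precise algebra to give something like $m_\epsilon=\frac{|z_2|^2+2\epsilon^2+\dots}{|z_2|^2+\epsilon^2}$ with $|z_1|$-dependent terms; the one property I must check is the uniform bound. The $u_\epsilon$ are smooth psh and decrease to $u$ as $\epsilon\downarrow0$, uniformly on compacts. By Bedford--Taylor continuity \cite{BT}, $(dd^cu_\epsilon)^2\to(dd^cu)^2$ weakly. Since the densities are uniformly bounded and converge to $c^2/4$ almost everywhere (the set $\{z_2=0\}$ has Lebesgue measure zero), dominated convergence identifies the limit as $\tfrac{c^2}{4}$ times Lebesgue measure. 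So no mass concentrates on the singular set. The product with flat factors follows in the same way, by regularizing only $u$, since the Monge--Amp\`ere operator of a sum of functions in separate variables factors.

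The expected obstacle is exactly this uniform bound on $m_\epsilon$ near $\{z_2=0\}$ with $z_1$ in a compact set. If the cross term does not cancel as cleanly as in the limit, I would instead use the alternative regularization $(1+|z_1|^2)\sqrt{|z_2|^2+\epsilon^2(1+|z_1|^2)^{-2}}$, or simply bound the density by the product of the diagonal entries. The product $u_{\epsilon,1\bar1}u_{\epsilon,2\bar2}\le C\sqrt{|z_2|^2+\epsilon^2}\cdot(|z_2|^2+\epsilon^2)^{-1/2}\le C$ already gives the required domination, since the determinant of a positive Hermitian matrix is at most the product of its diagonal entries (Hadamard). That settles the domination without any exact cancellation.
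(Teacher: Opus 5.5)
Your proposal is correct and is essentially the paper's proof: with $c=2$ your $u$ is the B{\l}ocki--He potential $2(1+|z_1|^2)|z_2|$ used there, and your $u_\epsilon$ is the paper's regularization with $\varepsilon=\epsilon^2$. The equation across $\{z_2=0\}$ then follows exactly as in the paper, from Bedford--Taylor continuity and dominated convergence. The one difference is that the paper computes the regularized determinant exactly, $\frac{c^2}{4}\bigl(1+\epsilon^2(1+2|z_1|^2)/(|z_2|^2+\epsilon^2)\bigr)\in[\frac{c^2}{4},c^2]$ on $B_1$, which gives both the domination and the positivity of $(u_{\epsilon,i\bar j})$ at once. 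Your Hadamard bound gives the domination but presupposes that $u_\epsilon$ is plurisubharmonic, so you should justify that, either by this formula or by noting that $\log u_\epsilon=\log(1+|z_1|^2)+\frac12\log(|z_2|^2+\epsilon^2)$ is plurisubharmonic.
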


\begin{proof}
For the flat form
$$
 \omega_g=\sqrt{-1}\sum_{j=1}^n dz_j\wedge d\bar z_j
          =\pbp |z|^2,
$$
we set
\beq
 \Phi(z)=2(1+|z_1|^2)|z_2|+\sum_{j=3}^n|z_j|^2,\qquad
 \varphi(z)=\Phi(z)-|z|^2.
 \label{eq:constant-example}
\eeq
The first two variables give the example discussed by B{\l}ocki \cite{Blo} and He \cite[Section 3]{He}. The remaining terms are flat factors, and the sum is zero when $n=2$. To verify the equation across $\{z_2=0\}$, we use
$$
 \Phi_\varepsilon
 =2(1+|z_1|^2)(|z_2|^2+\varepsilon)^{1/2}
                    +\sum_{j=3}^n|z_j|^2.
$$
For $x=|z_1|^2$ and $y=|z_2|^2$, a direct calculation gives
\beq
 \det((\Phi_\varepsilon)_{i\bar j})
 =\frac{(1+x)(y+2\varepsilon)-xy}{y+\varepsilon}
 =1+\frac{\varepsilon(1+2x)}{y+\varepsilon}.
 \label{eq:regularized-determinant}
\eeq
The Hessian has an identity block in the remaining variables, and its first diagonal entry is $2(y+\varepsilon)^{1/2}>0$. Since the determinant is positive, $\Phi_\varepsilon$ is strictly plurisubharmonic. Moreover, the approximation error satisfies
$$
 0\leq\Phi_\varepsilon-\Phi\leq4\sqrt\varepsilon
       \quad\hbox{on }\overline{B_1}.
$$
Thus $\Phi_\varepsilon$ decreases uniformly to the plurisubharmonic function $\Phi$. The determinants in \eqref{eq:regularized-determinant} are bounded by $4$ on $B_1$ and converge to $1$ almost everywhere. By Bedford--Taylor continuity \cite{BT} and dominated convergence, we obtain
$$
 (\pbp\Phi)^n=\omega_g^n
$$
throughout $B_1$, with no additional measure on $\{z_2=0\}$. Since $\omega_\varphi=\pbp\Phi$, the desired equation follows. The potential is Lipschitz on $\overline{B_1}$, and for real $h$ we have
$$
 \varphi(0,h,0,\ldots,0)=2|h|-h^2.
$$
The restriction to this line has different one-sided derivatives at $h=0$. Hence $\varphi$ is not differentiable at the origin.
\end{proof}

The smooth examples are constructed by smoothing the boundary values while keeping the density fixed.

\begin{proof}[Proof of Proposition~\ref{prop:smooth-sequence}]
For the functions in Lemma~\ref{lem:singular}, we choose $\varepsilon_j\downarrow0$ and solve
$$
 \det((\Psi_j)_{i\bar j})=1\quad\hbox{in }B_1,\qquad
 \Psi_j=\Phi_{\varepsilon_j}\quad\hbox{on }\partial B_1.
$$
By the smooth Dirichlet theorem \cite{CKNS}, there is a solution $\Psi_j\in C^\infty(\overline{B_1})$ with positive complex Hessian. The function $\Phi_{\varepsilon_j}$ is a smooth strictly plurisubharmonic subsolution by \eqref{eq:regularized-determinant}. Since the boundary values lie between $\Phi$ and $\Phi+4\sqrt{\varepsilon_j}$, comparison \cite{BT} gives
$$
 \Phi\leq\Psi_j\leq\Phi+4\sqrt{\varepsilon_j}
       \quad\hbox{on }\overline{B_1}.
$$
Here all three functions have the same Monge--Amp\`ere measure. With $\varphi_j=\Psi_j-|z|^2$ and $\varphi=\Phi-|z|^2$, the potentials $\varphi_j$ converge uniformly to $\varphi$, have uniformly bounded $L^\infty$ norms, and satisfy $\omega_{\varphi_j}^n=\omega_g^n$. If a subsequence had bounded $C^2(B_{1/2})$ norms, the Arzel\`a--Ascoli theorem would give a further subsequence converging in $C^1(\overline{B_{1/4}})$. Its limit would equal $\varphi$, contradicting Lemma~\ref{lem:singular}. Thus the $C^2$ norms tend to infinity, and so do the $C^{2,\alpha}$ norms.
\end{proof}

When Theorem~\ref{thm:main} is restricted to a coordinate ball, the estimate still depends on the global background and the global $C^\alpha$ norm of $F$. Proposition~\ref{prop:smooth-sequence} shows that these hypotheses cannot be replaced by \eqref{eq:local-data} alone. The difficulty also appears in the energy argument of Section~\ref{sec:parabolic-energy}. When a spatial cutoff $\chi$ is inserted into an integral involving the evolving metric, terms containing $|\Na\chi|_\varphi^2$ appear after integration by parts. Under the assumption $\omega_\varphi^n\geq c\omega_g^n$, with $v=\tr_g g_\varphi$ and $q=\tr_\varphi g$, we have
$$
 |\Na\chi|_\varphi^2\leq q|\Na\chi|_g^2,\qquad
 q\leq \frac n c\left(\frac{v}{n-1}\right)^{n-1}.
$$
The second inequality follows by diagonalizing $g_\varphi$ relative to $g$ and using
$$
 \lambda_i^{-1}
 =\frac{\prod_{j\ne i}\lambda_j}{\prod_j\lambda_j}
 \leq\frac1c\left(\frac{\sum_{j\ne i}\lambda_j}{n-1}\right)^{n-1}.
$$
Thus, in estimating $\int v^p\,d\mu_\varphi$, we may encounter the integral $\int v^{p+n-1}|\Na\chi|_g^2\,d\mu_\varphi$, which cannot be bounded by the density bound alone. On the compact manifold, these cutoff terms are absent when integration is over all of $M$, and the Green and Sobolev estimates in the fixed K\"ahler class are available. Local estimates require additional information, as in Theorems~\ref{thm:local-Holder} and \ref{thm:local-Lp} below.

We now prove local estimates under \eqref{eq:local-data} on the same coordinate ball. The following theorem gives the desired estimate under a $C^{1,\beta}$ bound. We add a smooth local potential of the background form and apply the theorem of Li--Li--Zhang to the resulting plurisubharmonic function.

\begin{theo}
\label{thm:local-Holder}
(cf. Li--Li--Zhang \cite{LLZ}) Under \eqref{eq:local-data}, we set
$$
 \beta_*(n,\alpha)=1-\frac{\alpha}{n(2+\alpha)-1}.
$$
If, in addition,
$$
 \varphi\in C^{1,\beta}(B_1),\qquad
 \|\varphi\|_{C^{1,\beta}(B_1)}\leq B,\qquad
 \beta_*<\beta<1,
$$
then the following estimate holds
$$
 \|\varphi\|_{C^{2,\alpha}(B_{1/2})}
 \leq C(n,\alpha,\beta,g,K,B).
$$
The result also holds for bounded weak solutions.
\end{theo}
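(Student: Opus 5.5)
The plan is to reduce Theorem~\ref{thm:local-Holder} directly to the local theorem of Li--Li--Zhang recalled in \eqref{eq:LLZ}, exactly as in Step~4 of the smooth-data proof in Section~\ref{sec:conclusion}. Since $g$ is a fixed smooth K\"ahler metric on a neighborhood of $\overline{B_1}$, I first choose a fixed smooth real local potential $\rho$ with $\rho_{i\bar j}=g_{i\bar j}$ on a neighborhood of $\overline{B_1}$; after possibly shrinking the neighborhood, the $\partial\bar\partial$-lemma on a ball gives such a $\rho$. Its $C^k$ norms on $\overline{B_1}$ depend only on $g$. I then set $\Phi=\rho+\varphi$. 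Because $\varphi$ is bounded and $\omega_g$-plurisubharmonic, $\Phi$ is plurisubharmonic on $B_1$. The equation $\omega_\varphi^n=e^F\omega_g^n$ becomes $\det(\Phi_{i\bar j})=a$ with $a=e^F\det(g_{i\bar j})$, understood in the pluripotential sense. Here the normalization of $(\pbp)^n$ versus $\det$ only contributes a dimensional constant, which I absorb into $a$.

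Next I verify the hypotheses of the local theorem. For the lower bound, $a\geq e^{-K}\inf_{B_1}\det(g_{i\bar j})=:\lambda>0$. For the H\"older norm, the mean value theorem gives $[e^F]_{C^\alpha}\leq e^K[F]_{C^\alpha}$, and the product rule for H\"older seminorms then gives $\|a\|_{C^\alpha(B_1)}\leq C(g,\alpha)e^K(1+K)$. For the potential, $\|\Phi\|_{C^{1,\beta}(B_1)}\leq C(g)+B$. Since $\beta_*<\beta<1$ is exactly condition \eqref{eq:LLZ}, \cite[Theorem~1.2]{LLZ} applies on $\Omega=B_1$, $\Omega'=B_{1/2}$ with $\operatorname{dist}=1/2$. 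It gives $\|\Phi\|_{C^{2,\alpha}(B_{1/2})}\leq C(n,\alpha,\beta,\lambda,\|a\|_{C^\alpha},\|\Phi\|_{C^{1,\beta}})$. Subtracting $\rho$ then yields the claimed bound on $\varphi$, with constant depending on $n,\alpha,\beta,g,K,B$. The bound $A$ on $\|\varphi\|_\infty$ is subsumed by $B$.

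The only point needing care is the final sentence of the theorem, about bounded weak solutions. The local theorem as recalled is already stated for weak solutions $\Phi\in\PSH(\Omega)\cap C^{1,\beta}(\Omega)$. So the main thing to check is that the pluripotential equation for $\varphi$ translates into the weak equation for $\Phi$ in the sense used in \cite{LLZ}. For continuous plurisubharmonic functions, the Bedford--Taylor measure $(\pbp\Phi)^n$ coincides with the measure $a\,dV$, which is the Alexandrov/viscosity-type notion used there; this identification is the step I expect to require citing rather than proving. If one prefers to avoid it, I would instead mollify $\Phi$ locally, solve Dirichlet problems with smoothed densities via \cite{CKNS}, and pass to the limit using comparison \cite{BT}. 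However, this route would require uniform $C^{1,\beta}$ control of the approximants, which is not available, so I would rely on the weak formulation in \cite{LLZ}.
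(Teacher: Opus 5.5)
Your proposal is correct and is essentially the paper's proof. The paper also sets $\Phi=\rho+\varphi$ for a fixed local potential $\rho$ of $\omega_g$, checks $a=e^F\det(g_{i\bar j})\geq e^{-K}\inf_{B_1}\det(g_{i\bar j})>0$, $\|a\|_{C^\alpha(B_1)}\leq C(g,\alpha,K)$ and $\|\Phi\|_{C^{1,\beta}(B_1)}\leq B+\|\rho\|_{C^{1,\beta}(B_1)}$, applies \cite[Theorem~1.2]{LLZ} on $B_{1/2}$, subtracts $\rho$, and handles bounded weak solutions simply by citing that theorem's weak formulation, just as you chose to do instead of the Dirichlet-approximation route.
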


\begin{proof}
For a fixed smooth local potential $\rho$ of $\omega_g$, we set $\Phi=\rho+\varphi$. Then $\Phi\in\PSH(B_1)\cap C^{1,\beta}(B_1)$ and
$$
 \det(\Phi_{i\bar j})=a,\qquad a=e^F\det(g_{i\bar j}).
$$
It follows from the hypotheses that
$$
 \begin{gathered}
 a\geq e^{-K}\inf_{B_1}\det(g_{i\bar j})>0,\qquad
 \|a\|_{C^\alpha(B_1)}\leq C(g,\alpha,K),\\
 \|\Phi\|_{C^{1,\beta}(B_1)}
 \leq B+\|\rho\|_{C^{1,\beta}(B_1)}.
 \end{gathered}
$$
By the strict condition $\beta>\beta_*(n,\alpha)$, the hypotheses of \cite[Theorem 1.2]{LLZ} are satisfied, including for weak solutions. Applying its interior estimate on $B_{1/2}$ and subtracting $\rho$, we obtain the assertion.
\end{proof}

The following theorem replaces the H\"older bound for the gradient by an integral bound for the trace. We use the interior elliptic estimate and Sobolev embedding to obtain a $C^{1,\beta}$ bound with an exponent large enough to apply Theorem~\ref{thm:local-Holder}.

\begin{theo}
\label{thm:local-Lp}
In addition to \eqref{eq:local-data}, we assume that the distribution $v_\varphi=n+\Delta_g\varphi$ is represented by an $L^p(B_1,d\mu_g)$ function satisfying
\beq
 \|v_\varphi\|_{L^p(B_1,d\mu_g)}\leq L,\qquad
 \frac{2n\bigl(n(2+\alpha)-1\bigr)}{\alpha}<p<\infty.
 \label{eq:Lp-threshold}
\eeq
Then the following estimate holds
\beq
 \|\varphi\|_{C^{2,\alpha}(B_{1/2})}
 \leq C(n,\alpha,p,g,K,A,L).
 \label{eq:Lp-conclusion}
\eeq
The condition \eqref{eq:Lp-threshold} is sufficient, and no optimality of the exponent is asserted.
\end{theo}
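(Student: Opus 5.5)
We will reduce Theorem~\ref{thm:local-Lp} to Theorem~\ref{thm:local-Holder}. The idea is to get a $C^{1,\beta}$ bound on a slightly smaller ball from the $L^p$ bound for $\Delta_g\varphi$, with $\beta=1-2n/p$, and then to check that the threshold \eqref{eq:Lp-threshold} is exactly the condition $\beta>\beta_*(n,\alpha)$.

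\textbf{Step 1: Sobolev bound.} Since $v_\varphi\in L^p$ with $\|v_\varphi\|_{L^p}\leq L$, the distribution $\Delta_g\varphi=v_\varphi-n$ lies in $L^p(B_1)$, with norm at most $C(L+n)$. Also $\|\varphi\|_{L^\infty(B_1)}\leq A$. The operator $\Delta_g=g^{i\bar j}\partial_i\partial_{\bar j}$ has smooth coefficients and is uniformly elliptic on $\overline{B_1}$. The interior $W^{2,p}$ estimate \cite[Chapter 9]{GilTru} for distributional solutions in $W^{2,p}_{\rm loc}$ therefore gives
\[
\|\varphi\|_{W^{2,p}(B_{3/4})}\leq C(n,p,g)\bigl(\|\Delta_g\varphi\|_{L^p(B_1)}+\|\varphi\|_{L^\infty(B_1)}\bigr)\leq C(L+A+1).
\]
To see that $\varphi\in W^{2,p}_{\rm loc}$ in the first place, we solve a Dirichlet problem for $\Delta_g$ with $L^p$ right-hand side on a ball. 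The difference between that solution and $\varphi$ is $\Delta_g$-harmonic, hence smooth by Weyl's lemma.

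\textbf{Step 2: H\"older gradient.} Morrey's embedding in real dimension $2n$, with $p>2n$, gives
\[
\|\varphi\|_{C^{1,\beta}(B_{3/4})}\leq C(n,p,g)(L+A+1),\qquad \beta=1-\frac{2n}p.
\]

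\textbf{Step 3: exponent check.} The condition $1-2n/p>1-\alpha/(n(2+\alpha)-1)$ is equivalent to $p>2n(n(2+\alpha)-1)/\alpha$, which is exactly \eqref{eq:Lp-threshold}. Thus $\beta_*<\beta<1$.

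\textbf{Step 4: apply Theorem~\ref{thm:local-Holder}.} We rescale $B_{3/4}$ to $B_1$ via $z\mapsto\tfrac34 z$. The pulled-back metric remains smooth with controlled bounds. The $C^\alpha$ norm of $F$ and the $C^{1,\beta}$ norm of $\varphi$ change by factors depending only on $n,\alpha,\beta$. Alternatively, we apply \cite[Theorem 1.2]{LLZ} directly to $\Phi=\rho+\varphi$ on $B_{3/4}$ with $\Omega'=B_{1/2}$, exactly as in the proof of Theorem~\ref{thm:local-Holder}. This yields $\|\varphi\|_{C^{2,\alpha}(B_{1/2})}\leq C(n,\alpha,p,g,K,A,L)$. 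Here $\beta$ is determined by $p$, so the dependence on $\beta$ is absorbed into the dependence on $p$.

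The main obstacle is Step~1. It must be justified for a merely bounded $\omega_g$-psh function whose Laplacian is only a distribution represented by an $L^p$ function, and the constant must depend only on $\|\varphi\|_\infty$ and not on any a priori second-derivative bound. The reduction to $W^{2,p}_{\rm loc}$ via the Dirichlet problem plus Weyl's lemma handles this. Everything else is routine bookkeeping of exponents.
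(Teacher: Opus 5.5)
Your proposal is correct and takes essentially the same route as the paper: an interior $W^{2,p}$ estimate for $\Delta_g\varphi=v_\varphi-n$, then Sobolev--Morrey embedding into $C^{1,\beta}$ with $\beta_*<\beta<1$, then Theorem~\ref{thm:local-Holder} (equivalently \cite[Theorem~1.2]{LLZ}) applied from $B_{3/4}$ down to $B_{1/2}$. The differences are cosmetic: you use the sharp Morrey exponent $\beta=1-2n/p$ where the paper takes the midpoint of $\beta_*$ and $1-2n/p$, and you justify $\varphi\in W^{2,p}_{\rm loc}$ explicitly, while the paper instead notes that the continuous Sobolev representative agrees with the plurisubharmonic potential, a point you leave implicit.
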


\begin{proof}
For
$$
 b=1-\frac{2n}{p},\qquad
 \beta=\frac12\bigl(b+\beta_*(n,\alpha)\bigr),
$$
the condition \eqref{eq:Lp-threshold} gives $\beta_*(n,\alpha)<\beta<b<1$. By local elliptic regularity for the distributional equation $\Delta_g\varphi=v_\varphi-n$ \cite[Chapter 9]{GilTru}, we have
$$
 \|\varphi\|_{W^{2,p}(B_{7/8})}
 \leq C(g,p)\left(
       \|\Delta_g\varphi\|_{L^p(B_1)}
       +\|\varphi\|_{L^p(B_1)}\right)
 \leq C(g,n,p)(L+A+1).
$$
This estimate also applies to bounded weak potentials, since the right-hand side belongs to $L^p$. Since $g$ is fixed, the normalized background measure is comparable to the Euclidean measure, with constants depending only on $g$. Applying Sobolev embedding in real dimension $2n$ \cite[Chapter 7]{GilTru}, we obtain
$$
 \|\varphi\|_{C^{1,\beta}(B_{3/4})}
 \leq C(g,n,p,\alpha)(L+A+1).
$$
The continuous Sobolev representative agrees with the plurisubharmonic potential. Applying Theorem~\ref{thm:local-Holder} with outer ball $B_{3/4}$ and inner ball $B_{1/2}$, we obtain \eqref{eq:Lp-conclusion}.
\end{proof}

Finally, we give the local estimate under a uniform trace bound. Since this bound controls every finite $L^p$ norm on the ball, the result follows by choosing an exponent allowed in Theorem~\ref{thm:local-Lp}.

\begin{cor}
\label{cor:local-trace}
(cf. Y. Wang \cite{WY} and Chen--Wang \cite{CW}) Under \eqref{eq:local-data}, if
$$
 \|n+\Delta_g\varphi\|_{L^\infty(B_1)}\leq L,
$$
then the following estimate holds
$$
 \|\varphi\|_{C^{2,\alpha}(B_{1/2})}
 \leq C(n,\alpha,g,K,A,L).
$$
\end{cor}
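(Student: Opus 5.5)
The plan is to reduce Corollary~\ref{cor:local-trace} directly to Theorem~\ref{thm:local-Lp} by choosing one admissible finite exponent. First fix
$$
 p=\frac{4n\bigl(n(2+\alpha)-1\bigr)}{\alpha},
$$
which lies strictly above the threshold in \eqref{eq:Lp-threshold} and depends only on $n$ and $\alpha$.

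Next, note that the hypothesis $\|n+\Delta_g\varphi\|_{L^\infty(B_1)}\leq L$ means that the distribution $v_\varphi$ is represented by a bounded function. This function is nonnegative almost everywhere, since $\omega_\varphi\geq0$ as a current. The normalized measure $d\mu_g$ on $B_1$ has total mass one, so
$$
 \|v_\varphi\|_{L^p(B_1,d\mu_g)}\leq\|v_\varphi\|_{L^\infty(B_1)}\leq L.
$$
Hence every hypothesis of Theorem~\ref{thm:local-Lp} holds, with the same $K$ and $A$ and with $L$ in place of the $L^p$ bound. That theorem then gives
$$
 \|\varphi\|_{C^{2,\alpha}(B_{1/2})}\leq C(n,\alpha,p,g,K,A,L).
$$
Because $p$ was fixed in terms of $n$ and $\alpha$, the dependence on $p$ is absorbed, and the constant becomes $C(n,\alpha,g,K,A,L)$, which is the assertion.

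The only point needing care, which I expect to be the main (and minor) obstacle, is interpreting the $L^\infty$ trace bound for a bounded weak $\omega_g$-plurisubharmonic potential. Here $\Delta_g\varphi$ is a priori only a measure, namely the trace of a positive current minus $n$. The hypothesis should be read as saying that this measure is absolutely continuous with a bounded density. With that reading, $v_\varphi\in L^p$ is the representation required in Theorem~\ref{thm:local-Lp}, and its proof (local $W^{2,p}$ regularity, Sobolev embedding, then Theorem~\ref{thm:local-Holder}) applies verbatim to the weak solution.
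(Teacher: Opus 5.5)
Your proposal is correct and is essentially the paper's proof: fix a finite $p$ strictly above the threshold in \eqref{eq:Lp-threshold}, use that $d\mu_g$ has total mass one on $B_1$ to get $\|v_\varphi\|_{L^p(B_1,d\mu_g)}\leq L$, and apply Theorem~\ref{thm:local-Lp}. Your remark that the $L^\infty$ hypothesis means the trace measure has a bounded density is a harmless clarification that the paper leaves implicit.
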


\begin{proof}
For a fixed finite $p>2n(n(2+\alpha)-1)/\alpha$, it follows from $\int_{B_1}d\mu_g=1$ that $\|n+\Delta_g\varphi\|_{L^p(B_1,d\mu_g)}\leq L$. The conclusion follows from Theorem~\ref{thm:local-Lp}.
\end{proof}

\vskip10pt
  Haozhao Li, Institute of Geometry and Physics, and  School of Mathematical Sciences, University of Science and Technology of China, No. 96 Jinzhai Road, Hefei, Anhui Province, 230026, China.   Email: hzli@ustc.edu.cn\par\medskip

Bing Wang, Institute of Geometry and Physics, and School of Mathematical Sciences, University of Science and Technology of China, Hefei 230026, China, and Hefei National Laboratory, Hefei 230088, China.  Email: topspin@ustc.edu.cn\par\medskip

Kai Zheng, University of Chinese Academy of Sciences, Beijing, 100190, China. Email: KaiZheng@amss.ac.cn


\begin{thebibliography}{99}

\bibitem{BD}
Z. B{\l}ocki and S. Dinew, \emph{A local regularity of the complex Monge--Amp\`ere equation}, Math. Ann. \textbf{351} (2011), no.~2, 411--416.

\bibitem{BT}
E. Bedford and B. A. Taylor, \emph{A new capacity for plurisubharmonic functions}, Acta Math. \textbf{149} (1982), nos.~1--2, 1--40.

\bibitem{Blo}
Z. B{\l}ocki, \emph{On the regularity of the complex Monge--Amp\`ere operator}, in \emph{Complex Geometric Analysis in Pohang}, K.-T. Kim and S. G. Krantz (eds.), Contemp. Math. \textbf{222}, Amer. Math. Soc., Providence, RI, 1999, 181--189.

\bibitem{CKNS}
L. Caffarelli, J. J. Kohn, L. Nirenberg and J. Spruck, \emph{The Dirichlet problem for nonlinear second-order elliptic equations. II. Complex Monge--Amp\`ere, and uniformly elliptic, equations}, Comm. Pure Appl. Math. \textbf{38} (1985), no.~2, 209--252.

\bibitem{Cal}
E. Calabi, \emph{Improper affine hyperspheres of convex type and a generalization of a theorem by K. J\"orgens}, Michigan Math. J. \textbf{5} (1958), no.~2, 105--126.

\bibitem{CC}
X. X. Chen and J. R. Cheng, \emph{Gradient estimate for complex Monge--Amp\`ere equation with continuous right hand side}, arXiv:2201.13330 (2022).

\bibitem{CC2}
X. X. Chen and J. R. Cheng, \emph{Some new estimates for the complex Monge--Amp\`ere equation}, Sci. China Math. \textbf{62} (2019), no. 11, 2073--2088.

\bibitem{MR2348984}
X. X. Chen and W. Y. Ding,
\emph{Ricci flow on surfaces with degenerate initial metrics}.
J. Partial Differential Equations \textbf{20} (2007), no. 3, 193--202.

\bibitem{CH}
X. X. Chen and W. Y. He, \emph{The complex Monge--Amp\`ere equation on compact K\"ahler manifolds}, Math. Ann. \textbf{354} (2012), no.~4, 1583--1600.

\bibitem{CW}
X. X. Chen and Y. Q. Wang, \emph{$C^{2,\alpha}$-estimate for Monge--Amp\`ere equations with H\"older-continuous right hand side}, Ann. Global Anal. Geom. \textbf{49} (2016), no.~2, 195--204.

\bibitem{MR2775789}
X. X. Chen, G. Tian and Z. Zhang,
\emph{On the weak K\"ahler-Ricci flow}.
Trans. Amer. Math. Soc. \textbf{363} (2011), no. 6, 2849--2863.

\bibitem{Chu}
J. Chu, \emph{$C^{2,\alpha}$ regularities and estimates for nonlinear elliptic and parabolic equations in geometry}, Calc. Var. Partial Differential Equations \textbf{55} (2016), no.~1, article~8, 20 pp.

\bibitem{DZZ}
S. Dinew, X. Zhang and X. W. Zhang, \emph{The $C^{2,\alpha}$ estimate of complex Monge--Amp\`ere equation}, Indiana Univ. Math. J. \textbf{60} (2011), no.~5, 1713--1722.

\bibitem{Evans}
L. C. Evans, \emph{Classical solutions of fully nonlinear, convex, second-order elliptic equations}, Comm. Pure Appl. Math. \textbf{35} (1982), no.~3, 333--363.

\bibitem{GilTru}
D. Gilbarg and N. S. Trudinger, \emph{Elliptic Partial Differential Equations of Second Order}, 2nd ed., Classics in Mathematics, Springer-Verlag, Berlin, 2001.

\bibitem{Grig}
A. Grigor'yan, \emph{Heat Kernel and Analysis on Manifolds}, AMS/IP Studies in Advanced Mathematics, vol.~47, Amer. Math. Soc., Providence, RI, and International Press, Boston, MA, 2009.

\bibitem{GPS}
B. Guo, D. H. Phong and J. Sturm, \emph{Green's functions and complex Monge--Amp\`ere equations}, J. Differential Geom. \textbf{127} (2024), no.~3, 1083--1119.

\bibitem{GPSSd}
B. Guo, D. H. Phong, J. Song and J. Sturm, \emph{Diameter estimates in K\"ahler geometry}, Comm. Pure Appl. Math. \textbf{77} (2024), no.~8, 3520--3556.

\bibitem{GPSSs}
B. Guo, D. H. Phong, J. Song and J. Sturm, \emph{Sobolev inequalities on K\"ahler spaces}, arXiv:2311.00221 (2023).

\bibitem{GT}
V. Guedj and T. D. T\^o, \emph{K\"ahler families of Green's functions}, J. \'Ec. polytech. Math. \textbf{12} (2025), 319--339.

\bibitem{He}
W. He, \emph{On the regularity of the complex Monge--Amp\`ere equations}, Proc. Amer. Math. Soc. \textbf{140} (2012), no.~5, 1719--1727 (arXiv:1002.4825).

\bibitem{Kol}
S. Ko\l odziej, \emph{The complex Monge--Amp\`ere equation}, Acta Math. \textbf{180} (1998), no.~1, 69--117.

\bibitem{Krylov}
N. V. Krylov, \emph{Boundedly nonhomogeneous elliptic and parabolic equations}, Izv. Akad. Nauk SSSR Ser. Mat. \textbf{46} (1982), no.~3, 487--523. English translation in Math. USSR Izv. \textbf{20} (1983), no.~3, 459--492.

\bibitem{LLZ}
C. Li, J. Li and X. Zhang, \emph{A $C^{2,\alpha}$ estimate of the complex Monge--Amp\`ere equation}, J. Funct. Anal. \textbf{275} (2018), no.~1, 149--169.

\bibitem{RS}
D. Riebesehl and F. Schulz, \emph{A priori estimates and a Liouville theorem for complex Monge--Amp\`ere equations}, Math. Z. \textbf{186} (1984), no.~1, 57--66.

\bibitem{Siu}
Y.-T. Siu, \emph{Lectures on Hermitian--Einstein Metrics for Stable Bundles and K\"ahler--Einstein Metrics}, DMV Seminar, vol.~8, Birkh\"auser, Basel, 1987.

\bibitem{ST}
G. Sz\'ekelyhidi and V. Tosatti, \emph{Regularity of weak solutions of a complex Monge--Amp\`ere equation}, Analysis \& PDE \textbf{4} (2011), no.~3, 369--378.

\bibitem{TWWY}
V. Tosatti, Y. Wang, B. Weinkove and X. Yang, \emph{$C^{2,\alpha}$ estimates for nonlinear elliptic equations in complex and almost complex geometry}, Calc. Var. Partial Differential Equations \textbf{54} (2015), no.~1, 431--453.

\bibitem{WY}
Y. Wang, \emph{On the $C^{2,\alpha}$-regularity of the complex Monge--Amp\`ere equation}, Math. Res. Lett. \textbf{19} (2012), no.~4, 939--946.

\bibitem{Yau}
S.-T. Yau, \emph{On the Ricci curvature of a compact K\"ahler manifold and the complex Monge--Amp\`ere equation. I}, Comm. Pure Appl. Math. \textbf{31} (1978), no.~3, 339--411.

\bibitem{ZZ}
X. Zhang and X. W. Zhang, \emph{Regularity estimates of solutions to complex Monge--Amp\`ere equations on Hermitian manifolds}, J. Funct. Anal. \textbf{260} (2011), no.~7, 2004--2026.

\end{thebibliography}
\end{document}